\numberwithin{equation}{section}
\newtheorem{theoremcounter}{theoremcounter}[section]
\theoremstyle{plain}
\newtheorem{corollary}[theoremcounter]{Corollary}
\newtheorem{lemma}[theoremcounter]{Lemma}
\newtheorem{proposition}[theoremcounter]{Proposition}
\newtheorem{theorem}[theoremcounter]{Theorem}
\newtheorem{introtheorem}{Theorem}
\theoremstyle{definition}
\newtheorem{convention}[theoremcounter]{Convention}
\newtheorem{definition}[theoremcounter]{Definition}
\theoremstyle{remark}
\newtheorem{notation}[theoremcounter]{Notation}
\newtheorem{remark}[theoremcounter]{Remark}
\newcommandx{\unsure}[2][1=]{\todo[linecolor=red,backgroundcolor=red!25,bordercolor=red,#1]{#2}}
\newcommandx{\change}[2][1=]{\todo[linecolor=blue,backgroundcolor=blue!25,bordercolor=blue,#1]{#2}}
\newcommandx{\info}[2][1=]{\todo[linecolor=OliveGreen,backgroundcolor=OliveGreen!25,bordercolor=OliveGreen,#1]{#2}}
\newcommandx{\improvement}[2][1=]{\todo[linecolor=Plum,backgroundcolor=Plum!25,bordercolor=Plum,#1]{#2}}
\newcommand{\cA}{\ensuremath{\mathcal{A}}}
\newcommand{\cB}{\ensuremath{\mathcal{B}}}
\newcommand{\cC}{\ensuremath{\mathcal{C}}}
\newcommand{\cE}{\ensuremath{\mathcal{E}}}
\newcommand{\cF}{\ensuremath{\mathcal{F}}}
\newcommand{\cG}{\ensuremath{\mathcal{G}}}
\newcommand{\cH}{\ensuremath{\mathcal{H}}}
\newcommand{\cN}{\ensuremath{\mathcal{N}}}
\newcommand{\cS}{\ensuremath{\mathcal{S}}}
\newcommand{\cU}{\ensuremath{\mathcal{U}}}
\newcommand{\cZ}{\ensuremath{\mathcal{Z}}}
\newcommand{\bT}{\ensuremath{\mathbb{T}}}
\newcommand{\rE}{\ensuremath{\mathrm{E}}}
\newcommand{\rF}{\ensuremath{\mathrm{F}}}
\newcommand{\rH}{\ensuremath{\mathrm{H}}}
\newcommand{\rL}{\ensuremath{\mathrm{L}}}
\newcommand{\rM}{\ensuremath{\mathrm{M}}}
\newcommand{\rR}{\ensuremath{\mathrm{R}}}
\newcommand{\rS}{\ensuremath{\mathrm{S}}}
\newcommand{\rZ}{\ensuremath{\mathrm{Z}}}
\newcommand{\rmd}{\ensuremath{\mathrm{d}}}
\newcommand{\rmh}{\ensuremath{\mathrm{h}}}
\newcommand{\rms}{\ensuremath{\mathrm{s}}}
\newcommand{\veps}{\ensuremath{\varepsilon}}
\newcommand{\vphi}{\ensuremath{\varphi}}
\newcommand{\ol}{\overline}
\newcommand{\eqstop}{\ensuremath{\, \text{.}}}
\newcommand{\eqcomma}{\ensuremath{\, \text{,}}}
\newcommand{\NN}{\ensuremath{\mathbb{N}}}
\newcommand{\ZZ}{\ensuremath{\mathbb{Z}}}
\newcommand{\RR}{\ensuremath{\mathbb{R}}}
\newcommand{\CC}{\ensuremath{\mathbb{C}}}
\newcommand{\id}{\ensuremath{\mathrm{id}}}
\newcommand{\lra}{\ensuremath{\longrightarrow}}
\newcommand{\hra}{\ensuremath{\hookrightarrow}}
\newcommand{\thra}{\ensuremath{\twoheadrightarrow}}
\newcommand{\GL}{\operatorname{GL}}
\newcommand{\Aut}{\ensuremath{\mathrm{Aut}}}
\newcommand{\ot}{\ensuremath{\otimes}}
\newcommand{\Cstar}{\ensuremath{\mathrm{C}^*}}
\newcommand{\bo}{\ensuremath{\mathcal{B}}}
\newcommand{\supp}{\ensuremath{\mathop{\mathrm{supp}}}}
\newcommand{\Cstarred}{\ensuremath{\Cstar_\mathrm{red}}}
\newcommand{\cont}{\ensuremath{\mathrm{C}}}
\newcommand{\conto}{\ensuremath{\mathrm{C}_0}}
\newcommand{\contc}{\ensuremath{\mathrm{C}_\mathrm{c}}}
\newcommand{\Lone}{\ensuremath{{\offinterlineskip \mathrm{L} \hskip -0.3ex ^1}}}
\newcommand{\ltwo}{\ensuremath{\ell^2}}
\newcommand{\Ad}{\ensuremath{\mathop{\mathrm{Ad}}}}
\newcommand{\grpaction}[1]{\ensuremath{\stackrel{#1}{\curvearrowright}}}
\newcommand{\lone}{\ensuremath{\ell^1}}
\newcommand{\redtimes}{\ensuremath{\rtimes_{\mathrm{red}}}}
\newcommand{\res}{\ensuremath{\mathrm{res}}}
\newcommand{\unitspace}[1]{#1^{(0)}}
\newcommand{\IsoInt}[1]{\mathcal{I}^{#1}}
\newcommand{\authors}{Are Austad and Sven Raum}
\renewcommand{\title}{Detecting ideals in reduced crossed product C*-algebras of topological dynamical systems}
\begin{document}


\thispagestyle{empty}

\noindent
\begin{minipage}{\linewidth}
  \begin{center}
    \textbf{\Large \title} \\[0.2em]
    \authors    
  \end{center}
\end{minipage}

\renewcommand{\thefootnote}{}
 \footnotetext{last modified on \today}
\footnotetext{
  \textit{MSC 2020 classification: 46L05, 37B02, 22E40, 20G30, 20F16}}

\footnotetext{
  \textit{Keywords: ideal intersection property, crossed product \Cstar-algebra, topological dynamical system, lattices in Lie groups, linear groups, polycyclic groups}
}

\vspace{2em}
\noindent
\begin{minipage}{\linewidth}
  \textbf{Abstract}.
  We introduce the \lone-ideal intersection property for crossed product \Cstar-algebras. It is implied by \Cstar-simplicity as well as \Cstar-uniqueness.  We show that topological dynamical systems of arbitrary lattices in connected Lie groups, arbitrary linear groups over the integers in a number field and arbitrary virtually polycyclic groups have the \lone-ideal intersection property.  On the way, we extend previous results on \Cstar-uniqueness of \Lone-groupoid algebras to the general twisted setting.
\end{minipage}


\section{Introduction}
\label{sec:introduction}

Crossed products associated with topological dynamical systems are among the prime sources of examples in the theory of \Cstar-algebras.  In recent years, amenable dynamical systems received abundant attention in the context of Elliott's classification programme (see e.g. \cite{hirshbergwinterzacharias2015,szabo2015-zn-actions,deeleyputnamstrung2015,kerrszabo2020}), while reduced group \Cstar-algebras were put in the spotlight by breakthrough results on \mbox{\Cstar-simplicity} \cite{kalantarkennedy14-boudaries,breuillardkalantarkennedyozawa14,kennedy15-cstarsimplicity,haagerup15}.

A foundational problem about crossed product \Cstar-algebras concerns their ideal structure.  For tame dynamical systems it is possible to give a complete description of the primitive ideal space of the associated crossed product in terms of induced primitive ideals, thanks to the Mackey machine \cite{rosenberg94,echterhoffwilliams2008}.  For wilder dynamical systems and for group \Cstar-algebras, it is the question of simplicity that received most attention.  Following seminal work on simplicity of group \Cstar-algebras \cite{kalantarkennedy14-boudaries,breuillardkalantarkennedyozawa14}, a satisfactory characterisation of topological dynamical systems whose crossed product \Cstar-algebras are simple could be obtained in \cite{kawabe17}.  This line of research even led to complete results about simplicity of \Cstar-algebras associated with {\'e}tale groupoids \cite{borys2019-boundary,kennedykimliraumursu2021}.  One important insight from the study of the ideal structure of groupoid \Cstar-algebras originated in \cite{tomiyama1992-lecture-notes} and says that specific subalgebras have the potential to detect ideals.

Much fewer results are available for dynamical systems that neither are tame nor give rise to simple crossed products.  However, the idea of employing subalgebras to detect ideals had surfaced earlier in a completely different context.  In work on abstract harmonic analysis and representation theory of solvable Lie groups, the concept of \Cstar-uniqueness of \Lone-convolution algebras was introduced in the late 1970's and early 1980's \cite{boidolleptinschurmannwahle1978,boidol1984}.  This notion can be reformulated as an ideal intersection property for the inclusion $\Lone(G) \subseteq \Cstar(G)$.  While for exponential solvable Lie groups Boidol could establish conclusive results \cite{boidol1980}, there have been no noteworthy advances in the investigation of \Cstar-uniqueness of discrete groups beyond the positive results for virtually nilpotent groups and some metabelian groups in the late 1970's \cite{boidolleptinschurmannwahle1978}.  It is considered an open question whether every amenable discrete group is \Cstar-unique \cite[Remark 3.6]{leungng2004}.  In some recent work starting with \cite{grigorchukmusatrordam17}, variations of \Cstar-uniqueness replacing the \lone-convolution algebra of a discrete group by its complex group algebra have been considered.  Results from \cite{alekseevkyed2019,alekseev2019-MFO-report,scarparo2020-cstar-unique} create an unclear image of which groups might have this property termed algebraic \mbox{\Cstar-uniqueness}.

The aim of this article is to introduce and study a new ideal intersection property for crossed product \Cstar-algebras, which can be established for a large class of examples including all \Cstar-simple groups and all \Cstar-unique groups.  At present, we have no example of a topological dynamical system $\Gamma \grpaction{} X$ that fails the following \emph{\lone-ideal intersection property} (compare with Definition~\ref{def:l1-ideal-intersection-property-crossed-products}): every non-zero ideal of the \Cstar-algebraic crossed product $\conto(X) \rtimes_{\mathrm{red}} \Gamma$ has non-zero intersection with the \lone-crossed product $\conto(X) \rtimes_{\lone} \Gamma$.  The next two theorems describe large classes of non-amenable and amenable examples of groups for which we can establish the \lone-ideal intersection property.
\begin{introtheorem}[See \cref{cor:acylindrically hyperbolic-groups-lone-ideal-intersection-property}, \cref{cor:lattices-lone-ideal-intersection-property} and \cref{cor:linear-groups-lone-ideal-intersection-property}]
  \label{introthm:non-amenable}
  Let $\Gamma$ be either an acylindrically hyperbolic group, a lattice in a connected Lie group or a linear group over the integers in a number field.  Then every action of $\Gamma$ on a locally compact Hausdorff space has the \lone-ideal intersection property.
\end{introtheorem}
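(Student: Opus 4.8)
The statement collects three independent cases, so the plan is to treat each family of groups separately while routing all of them through one common mechanism. In each case I would avoid verifying the \lone-ideal intersection property for $\conto(X) \rtimes_{\mathrm{red}} \Gamma$ by hand, and instead reduce it, through the structure theory of $\Gamma$, to the two implications already recorded above: that \Cstar-simplicity and that \Cstar-uniqueness each force the \lone-ideal intersection property for every action. The bridge between these two extremes is a permanence result under group extensions, which I would prove at the level of the transformation groupoid $\Gamma \ltimes X$ using the twisted \Lone-groupoid uniqueness theorem announced in the abstract.

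First I would establish an \emph{extension permanence}: if $N \trianglelefteq \Gamma$ is such that every twisted action of $N$ has the \lone-ideal intersection property and the quotient $Q = \Gamma/N$ is \Cstar-simple, then every action of $\Gamma$ has the property. The mechanism is to present $\Gamma \ltimes X$ as a groupoid extension whose fibre part is controlled by $N \ltimes X$ and whose base part is controlled by the induced $Q$-action, so that a non-zero ideal of $\conto(X) \rtimes_{\mathrm{red}} \Gamma$ is detected in two stages: detection along the $N$-layer is provided by the hypothesis on $N$, and detection along the $Q$-layer is provided by \Cstar-simplicity of $Q$. Alongside this I would record the base cases — finite groups and virtually polycyclic groups are \Cstar-unique, hence satisfy the property — and stability under passing to and from finite-index subgroups.

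With this machinery the three cases become structure-theoretic. If $\Gamma$ is acylindrically hyperbolic, I would take $N$ to be its finite radical; the quotient $\Gamma/N$ then has trivial finite radical and is \Cstar-simple, while $N$ is finite and hence \Cstar-unique, so extension permanence applies. If $\Gamma$ is a lattice in a connected Lie group $G$ with solvable radical $R$, I would use that $\Gamma \cap R$ is a polycyclic normal subgroup and that $\Gamma / (\Gamma \cap R)$ is a lattice in the semisimple group $G/R$; peeling off the polycyclic amenable radical of this semisimple lattice leaves a \Cstar-simple group, and two applications of extension permanence conclude. If $\Gamma \leq \GL_n(\mathcal{O}_K)$ is linear over the integers of a number field, I would combine the Tits alternative with the characterisation of \Cstar-simplicity of linear groups by triviality of the amenable radical: the amenable radical is virtually polycyclic and hence \Cstar-unique, the quotient by it is \Cstar-simple, and extension permanence again applies.

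The main obstacle is the extension permanence, not the structure theory. Whereas \Cstar-simplicity admits a clean boundary characterisation and \Cstar-uniqueness concerns a single \Lone-algebra, the \lone-ideal intersection property mixes a \Cstar-completion with a merely \lone-complete subalgebra; under an extension one must simultaneously control how a given ideal restricts to the normal subgroupoid and how it meets the \lone-layer. Making the groupoid decomposition compatible with the \lone-norms — so that detection on the fibre and detection on the base compose to detection for $\Gamma \ltimes X$ — is precisely the content of the twisted \Lone-groupoid uniqueness theorem, and this is where the real work lies. In particular the non-amenability of $Q$ means that the \Cstar-simplicity (boundary) input is genuinely needed and cannot be replaced by an averaging argument.
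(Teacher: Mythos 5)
Your reduction of the acylindrically hyperbolic case to a finite normal subgroup with \Cstar-simple quotient is sound, and matches the paper's own aside before \cref{cor:acylindrically hyperbolic-groups-lone-ideal-intersection-property}; the extension permanence you need there can be run entirely at the \Cstar-level (Packer--Raeburn plus the Bryder--Kennedy ideal detection theorem for reduced twisted crossed products over \Cstar-simple groups, which is in effect how \cref{lem:finite-by-cstar-simple} argues), and no \Lone-groupoid input is required for that splice since $\lone(N,\sigma)\subseteq\lone(\Gamma,\sigma)$ trivially. The genuine gap is in your ``base cases''. For lattices and for linear groups over $\cO_K$ the normal subgroup you peel off is the amenable radical, which is virtually polycyclic but typically infinite, and you record that ``virtually polycyclic groups are \Cstar-unique, hence satisfy the property''. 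That statement is not available to you: \Cstar-uniqueness of virtually polycyclic groups is \cref{introthm:amenable} of this very paper and was open beforehand (only virtually nilpotent and some metabelian groups were known). Worse, even granting \Cstar-uniqueness of $N$, your permanence argument needs the \lone-ideal intersection property for the restricted action $N\grpaction{}X$, that is, \Cstar-uniqueness of the Banach algebra $\conto(X)\rtimes_{\lone}N$, which by \cref{remark:relation-to-simplicity-and-C-star-uniqueness} is strictly stronger than \Cstar-uniqueness of the group $N$. So for two of the three cases your proof reduces Theorem~A to the full strength of Theorem~B without proving the latter.

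The missing content is exactly what \cref{thm:ideal-intersection-general-form} supplies, and it is organised differently from your fibre/base picture. The twisted \Lone-groupoid theorem (\cref{cor:groupoid-lone-ideal-intersection-property}) is not used to glue an $N$-layer onto a \Cstar-simple $Q$-layer; it is used inside the amenable radical. One passes to a finitely generated normal \emph{abelian} subgroup $A$ (the last derived subgroup of a maximal polycyclic subgroup, shrunk so that the cocycle becomes admissible as in \cref{lem:trivialising-cocycle}), Fourier-transforms the inclusion into a twisted groupoid over $\hat A$ via \cref{prop:identification-algebras-groupoid-decomposition}, and detects ideals through the isotropy fibres over the dense set of torsion characters. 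These fibres are twisted group algebras of the neighbourhood stabilisers $\Gamma^\circ_\chi/A$ (\cref{prop:groupoid-decomposition-stabilisers}), not of $\Gamma/A$, and the argument closes by induction on the Hirsch length, with \Cstar-simplicity entering only at the bottom of the induction once the amenable radical becomes finite. A general extension permanence for an arbitrary normal $N$ all of whose twisted actions have the property is not something this groupoid machinery can deliver, precisely because there is no analogue of the dual space $\hat A$ when $N$ is non-abelian. To salvage your route you must either prove Theorem~B independently or replace your single permanence step by the abelian-layer-by-layer induction the paper actually performs.
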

We remark that (acylindrically) hyperbolic groups, mapping class groups, and arithmetic groups fall in the scope of our theorem.
For amenable groups the \lone-ideal intersection property coincides with the notion of \Cstar-uniqueness, so that we obtain the first major enlargement since \cite{boidolleptinschurmannwahle1978} of the class of groups for which this property is known.
\begin{introtheorem}[See \cref{cor:locally-virtually-polycyclic}]
  \label{introthm:amenable}
  Every action of a locally virtually polycyclic group on a locally compact Hausdorff space has the \lone-ideal intersection property.  In particular, every locally virtually polycyclic group is \Cstar-unique.
\end{introtheorem}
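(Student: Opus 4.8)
The plan is to reduce everything to \cref{introthm:non-amenable} by combining two classical inputs: the Auslander--Swan theorem, which realises every finitely generated virtually polycyclic group as a subgroup of $\GL_n(\ZZ)$ for some $n$, and the standard fact that closed ideals of a $\Cstar$-inductive limit are detected on the building blocks. Write $\Gamma = \varinjlim_i \Gamma_i$ as the directed union of its finitely generated subgroups. Since $\Gamma$ is \emph{locally} virtually polycyclic, each $\Gamma_i$ is virtually polycyclic, hence embeds into some $\GL_{n_i}(\ZZ)$ by Auslander--Swan; in particular each $\Gamma_i$ is a linear group over the ring of integers $\ZZ$ of the number field $\QQ$, so \cref{introthm:non-amenable} applies to the restricted action $\Gamma_i \grpaction{} X$.

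First I would set up the direct-limit picture for the reduced crossed product. For every pair $\Gamma_i \subseteq \Gamma_j$ the coset decomposition of the regular representation shows that the inclusion $\contc(\Gamma_i, \conto(X)) \hookrightarrow \contc(\Gamma_j, \conto(X))$ is isometric for the respective reduced norms, and likewise for the inclusion into $\contc(\Gamma, \conto(X))$. Passing to completions yields isometric, hence injective, embeddings $\conto(X) \rtred \Gamma_i \hookrightarrow \conto(X) \rtred \Gamma$ whose union of images is dense, so that
\begin{equation*}
  \conto(X) \rtred \Gamma \;=\; \varinjlim_i \, \conto(X) \rtred \Gamma_i \eqstop
\end{equation*}
Correspondingly $\conto(X) \rtimes_{\lone} \Gamma = \bigcup_i \conto(X) \rtimes_{\lone} \Gamma_i$ as a (non-closed) subalgebra, simply because $\lone(\Gamma_i, \conto(X)) \subseteq \lone(\Gamma, \conto(X))$.

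The decisive step is the localisation of ideals. Let $I \neq 0$ be a closed ideal of $\conto(X) \rtred \Gamma$ and set $A_i := \conto(X) \rtred \Gamma_i$. By the standard description of ideals in an inductive limit with injective connecting maps one has $I = \overline{\bigcup_i (I \cap A_i)}$; in particular $I \cap A_i \neq 0$ for some $i$. Now $I \cap A_i$ is a non-zero closed ideal of $A_i$, so by \cref{introthm:non-amenable} applied to $\Gamma_i \grpaction{} X$ it meets $\conto(X) \rtimes_{\lone} \Gamma_i$ non-trivially. Any witnessing element lies in $I$ and in $\conto(X) \rtimes_{\lone} \Gamma_i \subseteq \conto(X) \rtimes_{\lone} \Gamma$, whence $I \cap (\conto(X) \rtimes_{\lone} \Gamma) \neq 0$. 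This is exactly the \lone-ideal intersection property. Finally, specialising to $X$ a one-point space gives $\conto(X) \rtred \Gamma = \Cstarred(\Gamma)$ and $\conto(X) \rtimes_{\lone} \Gamma = \lone(\Gamma)$; since locally virtually polycyclic groups are amenable (amenability being a local property), $\Cstarred(\Gamma) = \Cstar(\Gamma)$, and the property just established is precisely \Cstar-uniqueness.

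I do not expect a genuine obstacle here: the hard analysis is entirely absorbed into \cref{introthm:non-amenable}, and the passage from ``finitely generated'' to ``locally'' is soft. The one point demanding care is the ideal-localisation fact for $\Cstar$-inductive limits, and in particular the injectivity of the connecting maps (which is where faithfulness of the canonical conditional expectation onto $\conto(X)$, or equivalently amenability, enters) so that each $A_i$ may legitimately be viewed as a subalgebra of the limit and intersected with $I$.
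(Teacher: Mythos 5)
Your argument is correct, but it takes a genuinely different route from the paper's. The paper proves this statement (\cref{cor:locally-virtually-polycyclic}) by reducing to finitely generated, hence virtually polycyclic, subgroups via \cref{prop:lone-ideal-intersection-directed-union} and then verifying the three conditions of \cref{def:polycyclically-bounded-groups} directly --- which is essentially trivial for virtually polycyclic groups (amenability forces the Furstenberg subgroup to equal the amenable radical, subgroups of polycyclic groups are polycyclic, and the Hirsch length is monotone) --- so that \cref{thm:ideal-intersection-general-form} applies. You instead invoke Auslander--Swan to embed each finitely generated building block into some $\GL_{n_i}(\ZZ)$ and quote the linear-group case of Theorem A (\cref{cor:linear-groups-lone-ideal-intersection-property}); this is not circular, since that corollary is established independently of the locally virtually polycyclic case, and your inductive-limit ideal-localisation step (quotienting by $I$ and using that $A_i/(I\cap A_i)\to A/I$ is isometric, so $I=\overline{\bigcup_i (I\cap A_i)}$) is a valid reformulation of \cref{prop:lone-ideal-intersection-directed-union} in the language of ideals rather than of minimality of norms. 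What your route buys is brevity modulo Theorem A; what it costs is that it drags in Auslander--Swan, the Tits alternative and the \Cstar-simplicity machinery for linear groups to prove a statement whose direct verification against \cref{def:polycyclically-bounded-groups} is a three-line check. Two small inaccuracies worth flagging, neither of which affects the argument: the asserted equality $\conto(X)\rtimes_{\lone}\Gamma=\bigcup_i \conto(X)\rtimes_{\lone}\Gamma_i$ fails when $\Gamma$ is not finitely generated (an $\lone$-element need only be supported on a countable, not finitely generated, subgroup), but you only use the inclusion of each $\conto(X)\rtimes_{\lone}\Gamma_i$, which does hold; and the injectivity of the connecting maps $\conto(X)\rtred\Gamma_i\hra\conto(X)\rtred\Gamma$ is a general consequence of the coset decomposition for reduced crossed products and has nothing to do with amenability, contrary to your closing parenthesis.
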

We remark that also metabelian groups can be covered by our methods, as noted in \cref{rem:possible-extensions}.  Thus our results cover and extend all previously known examples of \Cstar-unique groups.
In order to obtain the results above, we establish a general criterion on groups to satisfy the \lone-ideal intersection property for all their dynamical systems.  It combines assumptions that relate to \mbox{\Cstar-simplicity} with conditions on the structure of amenable subgroups.
\begin{introtheorem}[See \cref{thm:ideal-intersection-general-form}]
  \label{introthm:general}
  Let $\Gamma$ be a discrete group such that the following three conditions hold for every finitely generated subgroup $\Lambda \leq \Gamma$:
  \begin{itemize}
  \item the amenable radical of $\Lambda$ is a Furstenberg subgroup,
  \item every amenable subgroup of $\Lambda$ is virtually solvable, and
  \item there is $l \in \NN$ such that every solvable subgroup of $\Lambda$ is polycyclic of Hirsch length at most $l$.
  \end{itemize}
  Then every action of $\Gamma$ on a locally compact Hausdorff space has the \lone-ideal intersection property.
\end{introtheorem}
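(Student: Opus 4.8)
The plan is to reorganise the problem around the transformation groupoid $\mathcal{G} = X \rtimes \Gamma$, whose reduced C*-algebra is $\conto(X) \rtred \Gamma$ and whose $\ell^1$-algebra is $\conto(X) \rtimes_{\lone} \Gamma$, and then to combine the Furstenberg-boundary techniques of C*-simplicity theory with an induction on Hirsch length carried out in the twisted $\ell^1$-groupoid setting. The overall strategy is to pass to a boundary extension that makes all relevant isotropy groups amenable without losing ideal information, and then to detect a non-zero $\ell^1$-element by exploiting the polycyclic structure of that isotropy.

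First I would form the boundary extension $\Gamma \grpaction{} X \times \partial_{\mathrm F}\Gamma$, where $\partial_{\mathrm F}\Gamma$ is the Furstenberg boundary, together with its transformation groupoid. The first hypothesis — that the Furstenberg subgroup of every subgroup coincides with its amenable radical — is the C*-simplicity-type input, and it is precisely what ensures that the canonical comparison between ideals of $\conto(X) \rtred \Gamma$ and ideals of the boundary-extended crossed product loses no information; note that for amenable groups this hypothesis holds automatically, so it constrains only the non-amenable part of the structure. Consequently it suffices to produce a non-zero $\ell^1$-element inside the corresponding ideal of the extended system. The decisive gain is that the point stabilisers of $\partial_{\mathrm F}\Gamma$ are amenable, so every isotropy group of the extended groupoid, being contained in such a stabiliser, is amenable as well.

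At this point the second and third hypotheses determine the structure of the isotropy. Every isotropy group of the boundary-extended groupoid is amenable; writing it as the directed union of its finitely generated subgroups — to each of which the hypotheses apply — it is locally virtually solvable and, by the uniform bound, locally virtually polycyclic of Hirsch length at most $l$. I would then invoke the twisted $\ell^1$-groupoid C*-uniqueness result extending the untwisted theory, which applies to \'etale groupoids with isotropy of exactly this type and yields a non-zero element of the $\ell^1$-algebra in any non-zero ideal; transporting this element back through the boundary extension gives the required non-zero element of $I \cap \bigl(\conto(X) \rtimes_{\lone} \Gamma\bigr)$.

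The main obstacle is the twisted $\ell^1$-groupoid result underlying the last step, which I would prove by induction on the Hirsch-length bound $l$. In the base case $l = 0$ the isotropy is locally finite and one detects ideals essentially as in the principal, topologically free situation. The inductive step peels a normal abelian subgroup $A$ off the isotropy — available by polycyclicity, for instance from the centre of the Fitting subgroup after clearing torsion, so that $A$ has positive Hirsch length — and realises the crossed product as an iterated construction over $A$ and a quotient of Hirsch length at most $l-1$. Two points require care and form the real content of the argument. First, removing $A$ introduces a genuine extension cocycle, which is exactly why the whole induction must be run in the twisted setting rather than for ordinary groupoids. Second, one must keep the $\ell^1$-norm under control simultaneously through the boundary extension and through the iterated decomposition, so that the element manufactured by abelian Fourier analysis on the $A$-layer remains inside the $\ell^1$-completion of the original system and does not merely lie in its reduced closure.
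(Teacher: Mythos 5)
Your overall architecture --- transformation groupoid, reduction to isotropy, induction on the Hirsch length bound by peeling off an abelian normal subgroup in a twisted setting --- is in the spirit of the paper, but the step where you insert the Furstenberg boundary is a genuine gap, and it is not how the first hypothesis is actually used. You pass to $\conto(X\times\partial_{\rF}\Gamma)\rtred\Gamma$, find a non-zero $\lone$-type element in the ideal $\tilde I$ generated there by $I$, and then claim to ``transport it back''. There is no mechanism for this transport: for non-amenable $\Gamma$ there is no conditional expectation (nor any positive $\Gamma$-equivariant map) from the boundary-extended crossed product back onto $\conto(X)\rtred\Gamma$; the element you produce lies in $\tilde I$ rather than in $I$, and in the $\lone$-algebra of the \emph{larger} system rather than in $\conto(X)\rtimes_{\lone}\Gamma$. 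The boundary ideal-detection results you allude to are proved by u.c.p./injectivity rigidity and give no control on $\lone$-decay of elements of an ideal. The paper uses the first hypothesis in an entirely different place: via \cref{prop:cstar-simple-quotient} and \cref{lem:finite-by-cstar-simple}, the induction peels abelian subgroups off the amenable radical of $\Gamma$ itself until that radical is finite, at which point $\Gamma$ is finite-by-(\Cstar-simple), the relevant twisted crossed products decompose into simple summands, and the intersection property is automatic. No boundary extension ever enters.

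The inductive step is also underspecified exactly where the weight is carried. The abelian layer $A$ is handled by passing to the dual: one forms the twisted groupoid over $\hat A$ (\cref{def:groupoid-decomposition}, \cref{prop:identification-algebras-groupoid-decomposition}) and applies \cref{theorem:fiber-groups-sufficient} at the dense set of \emph{torsion} characters $\chi\in\hat A$; density requires $A$ torsion-free, and torsion of $\chi$ is what forces the new cocycle $(\chi\circ\rho)\cdot(\sigma\circ(s\times s))$ on $\Gamma^\circ_\chi/A$ to again take values in a finite subgroup of $\rS^1$. This means ``cocycle valued in a finite subgroup'' must be part of the induction statement, which your sketch omits; without it the induction does not close. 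Finally, the $\lone$ versus $\Lone$ tension you correctly flag is not resolved by a norm estimate but by \cref{theorem:fiber-groups-sufficient} combined with \cref{lem:L1-isometric-isomorphism}, which identify the fibre of the groupoid $\Lone$-algebra at a strongly fixed point with the $\lone$-algebra of the isotropy group --- invoking a groupoid $\Lone$-uniqueness statement alone yields the intersection property for the wrong (larger) subalgebra, as the paper points out after \cref{lem:update-crossed-products}.
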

The proof of \cref{introthm:general} leverages and extends recent developments in the structure theory of groupoid operator algebras in order to set up an induction scheme.  The assumptions on amenable subgroups are exploited to construct a twisted groupoid, which is analysed from the point of view of the \Lone-ideal intersection property for groupoids, previously studied in \cite{austadortega2022}.  Ultimately our induction argument reduces the maximal possible Hirsch length of polycyclic subgroups.  The following generalisation of work from \cite{austadortega2022} allows us to analyse the twisted groupoids we obtain when applying our strategy.  It might be of independent interest.
\begin{introtheorem}[See \cref{cor:groupoid-lone-ideal-intersection-property}]
  \label{introthm:groupoids}
  Let $\cE$ be a twist over a second-countable locally compact {\'e}tale Hausdorff groupoid $\cG$.  Assume that there is a dense subset $D \subseteq \unitspace{\cG}$ such the fibres $(\IsoInt{\cG}_x, \IsoInt{\cE}_x)$ have the \lone-ideal intersection property for all $x \in D$.  Then $(\cG, \cE)$ has the \Lone-ideal intersection property.
\end{introtheorem}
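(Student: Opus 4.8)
The plan is to follow the untwisted strategy of \cite{austadortega2022}, carrying the twist through at every stage, and to reduce the \Lone-ideal intersection property of $(\cG,\cE)$ to the assumed \lone-ideal intersection property on the fibres. Write $A = \Cstarred(\cG,\cE)$ and let $\cB = \Cstarred(\IsoInt{\cG},\IsoInt{\cE})$ be the reduced twisted \Cstar-algebra of the interior isotropy bundle, viewed as a subalgebra of $A$; it is a $\conto(\unitspace{\cG})$-algebra with fibre $A_x := \Cstarred(\IsoInt{\cG}_x,\IsoInt{\cE}_x)$ and evaluation $\ev_x \colon \cB \to A_x$ at each $x \in \unitspace{\cG}$. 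The first ingredient is the canonical conditional expectation $E \colon A \to \cB$ obtained by restricting sections to the open subgroupoid $\IsoInt{\cG}$; as in the untwisted case this expectation is faithful. Given a nonzero ideal $J \trianglelefteq A$, pick $0 \le a \in J$ with $a \ne 0$, so that $E(a) \ne 0$.

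The core is a localisation step. For $x_0 \in D$ let $F_{x_0} = \overline{\cG\cdot x_0} \subseteq \unitspace{\cG}$ be the orbit closure; it is closed and invariant, so restriction is a quotient $*$-homomorphism $q_{x_0} \colon A \thra \Cstarred(\cG_{F_{x_0}},\cE_{F_{x_0}})$, and because $x_0 \in F_{x_0}$ the map $\ev_{x_0}\circ E$ factors through $q_{x_0}$. I claim $q_{x_0}(J) \ne 0$ for some $x_0 \in D$: otherwise $\ev_{x_0}(E(b)) = 0$ for every $b \in J$ and every $x_0 \in D$, and since the \emph{reduced} fibre norms $x \mapsto \| \ev_x(\,\cdot\,) \|$ are lower semicontinuous the dense set $D$ is norming on $\cB$, forcing $E(J) = 0$ and hence $J = 0$ by faithfulness. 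Fix such an $x_0$. As $x_0$ has dense orbit in $F_{x_0}$, the regular representation attached to $x_0$ is faithful on $\Cstarred(\cG_{F_{x_0}},\cE_{F_{x_0}})$ while the isotropy group acts on the right and exhibits $A_{x_0}$ in the commutant; through this picture the nonzero ideal $q_{x_0}(J)$ determines a nonzero ideal $I \trianglelefteq A_{x_0}$. Making this correspondence precise and compatible with $\cE$ is the main groupoid input generalising \cite{austadortega2022}.

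Since $x_0 \in D$, the fibre $(\IsoInt{\cG}_{x_0},\IsoInt{\cE}_{x_0})$ has the \lone-ideal intersection property, so the nonzero ideal $I$ contains a nonzero element $\xi \in \lone(\IsoInt{\cG}_{x_0},\IsoInt{\cE}_{x_0})$. It remains to transport $\xi$ to a nonzero element of $J \cap \Lone(\cG,\cE)$. The idea is to write $\xi$ as a limit of images of elements of $J$, to realise nearby elements by compactly supported twisted sections $f \in \contc(\cG;\cE)$ whose supports lie in thin bisections around the isotropy over $x_0$, and to cut down by functions in $\conto(\unitspace{\cG})$ concentrated near $x_0$ in order to stay inside $J$. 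On a thin bisection the \Lone-norm behaves like the \lone-norm of the corresponding coefficient on the fibre group, so the \lone-summability of $\xi$ can be propagated to \Lone-summability upstairs.

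I expect this last transport to be the main obstacle. One must simultaneously keep the approximants inside the ideal $J$, control their \Lone-norms, and certify that the limit is nonzero \emph{in} $\Lone(\cG,\cE)$ rather than only in the reduced completion $A$ --- precisely the gap between the \Lone-ideal intersection property and the mere detection of ideals by the isotropy bundle. The twist adds the requirement that every restriction, evaluation and cutoff respect $\cE$; this is the bookkeeping cost of upgrading \cite{austadortega2022} to the twisted setting and should introduce no new analytic difficulty once the fibrewise \lone-estimate is available.
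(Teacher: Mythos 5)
Your overall architecture --- reduce to the interior isotropy bundle, exploit the fibrewise \lone-ideal intersection property over the dense set $D$, and use lower semicontinuity of the reduced fibre norms to see that $D$ is norming --- matches the paper's in outline, but the proof as proposed has a genuine gap, and it is exactly the one you flag at the end. Working directly with a nonzero ideal $J \unlhd \Cstarred(\cG,\cE)$ forces you to (a) produce from $J$ a nonzero ideal $I$ of the fibre algebra $\Cstarred(\IsoInt{\cG}_{x_0},\IsoInt{\cE}_{x_0})$, and then (b) transport a nonzero element $\xi \in I \cap \lone(\IsoInt{\cG}_{x_0},\IsoInt{\cE}_{x_0})$ back to a nonzero element of $J \cap \Lone(\cG,\cE)$. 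Step (a) is asserted via a commutant picture in the regular representation at $x_0$ but never constructed (note also that the relevant fibre group is the \emph{interior} isotropy $\IsoInt{\cG}_{x_0}$, not the full isotropy group that appears in the commutant of the regular representation at $x_0$), and step (b) has no general mechanism: cutting down by functions in $\conto(\unitspace{\cG})$ concentrated near $x_0$ keeps you inside $J$ but only lets you approximate in the reduced norm, and there is no reason the approximants should converge in, or have a nonzero accumulation point in, $\Lone(\cG,\cE)$. This is not bookkeeping; it is the crux, and ``thin bisection'' support control does not supply the missing uniform \Lone-bound.

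The paper avoids both problems by never lifting elements. It uses the equivalent reformulation of the ideal intersection property as minimality of the reduced norm (\cref{prop:equivalent-reformulations}): it suffices to show that every $*$-homomorphism $\pi \colon \Cstarred(\cG,\cE) \to \cA$ that is injective on $\Lone(\cG,\cE)$ is injective. After reducing to the isotropy bundle via Armstrong's theorem (\cref{thm:essential-monomorphism}), the image $\cB = \pi(\Cstarred(\IsoInt{\cG},\IsoInt{\cE}))$ is a $\conto(\unitspace{\cG})$-algebra, and the key computation (\cref{lem:L1-isometric-isomorphism}) identifies the quotient of $\Lone(\IsoInt{\cG},\IsoInt{\cE})$ by the ideal generated by $\conto(\unitspace{\cG}\setminus\{x\})$ \emph{isometrically} with $\lone(\IsoInt{\cG}_x,\IsoInt{\cE}_x)$. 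Hence injectivity of $\pi$ on $\Lone$ descends to injectivity of $\pi_x$ on $\lone$ of each fibre, the fibrewise \lone-ideal intersection property upgrades $\pi_x$ to an isometry for $x \in D$, and the norming argument over $D$ concludes. If you want to keep your element-based route you must prove (a) and (b) outright; switching to the norm-minimality formulation dissolves them.
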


\subsection*{Structure of the article}

In \cref{sec:preliminaries}, we introduce necessary background material and fix notation.  In \cref{sec:lone-ideal-intersection-property-basics} we formally define the \lone-ideal intersection property for twisted \Cstar-dynamical systems and show in particular that it is closed under directed unions of groups.  In \cref{section:groupoid-result-extension} we study the \Lone-ideal intersection property for twisted groupoids.  In \cref{sec:groupoid-presentation} we present certain twisted group \Cstar-algebras as twisted groupoid \mbox{\Cstar-algebras}, which is a key ingredient for the subsequent \cref{sec:main-results}, where we obtain our main results.

\subsection*{Acknowledgements}

The first author gratefully acknowledges the financial support from the Independent Research Fund Denmark through grant number 1026-00371B and The Research Council of Norway project 324944. The second author was supported by the Swedish Research Council through grant number 2018-04243. The authors would like to thank the organisers of the 28th Nordic Congress of Mathematicians in Aalto, where this project was initiated. They are grateful to Matthew Kennedy for interesting discussions about the \lone-ideal intersection property and to Magnus Goffeng for asking whether beyond group algebras also crossed product \Cstar-algebras could be investigated with the present techniques. We thank Becky Armstrong for clarifying conversations on the role of the second-countability assumption in her work.

\section{Preliminaries}
\label{sec:preliminaries}

\begin{convention}
  \label{conv:discrete-groups}
  All groups in this article are discrete unless otherwise specified.  Every ideal in a \Cstar-algebra will be assumed to be closed and two-sided. 

\end{convention}

\subsection{Virtually polycyclic groups}
\label{sec:virtually-polycyclic-groups}

A group $\Gamma$ is called \emph{polycyclic} if there exists a subnormal series
\begin{equation*}
  1 = \Gamma_0 \unlhd \Gamma_1 \cdots \unlhd \Gamma_{n-1} \unlhd \Gamma_n = \Gamma
\end{equation*}
such that each of the factor groups $\Gamma_i /\Gamma_{i-1}$ is cyclic. The group is called \emph{poly-$\ZZ$} if each of the factor groups are isomorphic to $\ZZ$.

It is known that polycyclic groups are precisely the solvable groups for which every subgroup is finitely generated, see \cite[Chapter 1, Proposition 4]{segal1983}.  We also recall from \cite[Chapter 1, Proposition 2]{segal1983} that a group $\Gamma$ is virtually polycyclic if and only if it is (poly-$\ZZ$)-by-finite.

The \emph{Hirsch length} of a virtually polycyclic group $\Gamma$ is the number of infinite cyclic factors in a subnormal series with cyclic or finite factors.  It is denoted by $\rmh(\Gamma)$.  See \cite[Chapter 1, Part C]{segal1983} for a discussion of the Hirsch length and its properties.  In particular, we will make use of the following properties:
\begin{itemize}
\item If $\Lambda \leq \Gamma$, then $\rmh(\Lambda) \leq \rmh(\Gamma)$.  Equality holds if and only if $[\Gamma : \Lambda] < \infty$.
\item If $\Lambda \unlhd \Gamma$ is normal, then $\rmh(\Gamma) = \rmh(\Lambda) + \rmh(\Gamma/\Lambda)$.
\end{itemize}

\subsection{C*-uniqueness}
\label{sec:cstar-uniqueness}

A group $\Gamma$ is called $\lone$-to-\Cstar-unique or just \Cstar-unique for short if $\lone(\Gamma)$ has a unique \Cstar-norm.  It is clear that every \Cstar-unique group is amenable and that a group $\Gamma$ is \Cstar-unique if and only if $\lone(\Gamma)$ has non-zero intersection with every non-zero ideal of $\Cstar(\Gamma)$.  The following result is a special case of \cite[Satz 2]{boidolleptinschurmannwahle1978}.
\begin{theorem}
  \label{thm:polynomial-growth-group-cstar-unique}
  Every finitely generated group of polynomial growth is \Cstar-unique.
\end{theorem}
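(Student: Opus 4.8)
The plan is to combine Gromov's structure theorem with the symmetry theory of $\lone$-algebras. First I would record that a finitely generated group of polynomial growth is virtually nilpotent, by Gromov's theorem, so that it suffices to prove that every finitely generated virtually nilpotent group $\Gamma$ is \Cstar-unique. Using the reformulation recalled just above the statement, this amounts to showing that every non-zero closed ideal $I$ of $\Cstar(\Gamma)$ satisfies $I \cap \lone(\Gamma) \neq 0$. The naive attempt --- take $0 \neq a = a^*$ in $I$ and try to push a nearby $\lone$-element into $I$ --- fails, because multiplying $a$ by $\lone$-functions leaves the ideal the moment one approximates $a$ within $\lone(\Gamma)$; the thinness of $\lone(\Gamma)$ inside $\Cstar(\Gamma)$ is precisely the difficulty. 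The route I would take instead is spectral: control the spectra of self-adjoint elements well enough that no quotient $\Cstar(\Gamma)/I$ can carry a strictly smaller \Cstar-norm.

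The analytic heart is the symmetry of $\lone(\Gamma)$: for groups of polynomial growth, $\lone(\Gamma)$ is a symmetric (hermitian) Banach $*$-algebra, by Hulanicki's theorem. By Pt\'ak's theorem this gives, for self-adjoint $x \in \lone(\Gamma)$, the identity $\|x\|_{\Cstar(\Gamma)} = r_{\lone(\Gamma)}(x)$, where $r_{\lone(\Gamma)}$ denotes the spectral radius computed in $\lone(\Gamma)$. I would then upgrade symmetry to the stronger conclusion that $\lone(\Gamma)$ is $*$-regular in the sense of \cite{boidolleptinschurmannwahle1978}, i.e. that the canonical map $\mathrm{Prim}\,\Cstar(\Gamma) \to \mathrm{Prim}_{*}\lone(\Gamma)$, $P \mapsto P \cap \lone(\Gamma)$, is a homeomorphism for the hull-kernel topologies. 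Granting $*$-regularity, \Cstar-uniqueness is formal: the induced map on closed ideals is injective, so a non-zero ideal $I$ cannot have the same $\lone$-trace as the zero ideal, forcing $I \cap \lone(\Gamma) \neq 0$. Equivalently, in spectral language, $*$-regularity prevents the spectrum of a self-adjoint $x$ from shrinking upon passage to any quotient $\Cstar(\Gamma)/I$, so that every \Cstar-norm on $\lone(\Gamma)$ agrees with $r_{\lone(\Gamma)}(\,\cdot\,)$ on self-adjoint elements and hence coincides with the $\Cstar(\Gamma)$-norm.

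To establish $*$-regularity for virtually nilpotent $\Gamma$ I would argue by induction along the upper central series. For $\Gamma$ with centre $Z$, I would decompose $\Cstar(\Gamma)$ over the dual group $\widehat{Z}$, presenting it via the Mackey and central-character machinery as a field of twisted group \Cstar-algebras of $\Gamma/Z$; the inductive hypothesis applies to the quotient, whose Hirsch length $\rmh$ has strictly dropped, and the decrease of $\rmh$ at each step terminates the induction. The passage from a normal subgroup of finite index to $\Gamma$ itself --- the ``virtually'' in virtually nilpotent --- is handled separately, using that $*$-regularity, equivalently \Cstar-uniqueness, is stable under finite extensions via induced representations. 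The step I expect to be the main obstacle is exactly this propagation of symmetry and regularity through the central decomposition: one must control the hull-kernel topology, and hence the spectra of self-adjoint $\lone$-elements, \emph{uniformly} across the continuum of central characters in $\widehat{Z}$, and it is here that the polynomial-growth hypothesis, rather than mere amenability, is indispensable.
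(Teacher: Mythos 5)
Your overall strategy is not the paper's: the paper quotes the general statement from \cite[Satz 2]{boidolleptinschurmannwahle1978} and only supplies a self-contained argument in the single case it actually uses, namely $\Gamma = \ZZ^n$, where the Fourier transform carries the Schwartz subalgebra of $\lone(\ZZ^n)$ onto $\cont^\infty(\bT^n)$ and every non-zero closed ideal of $\cont(\bT^n)$ visibly contains a non-zero smooth function. What you propose instead is to reprove the Boidol--Leptin--Sch\"urmann--Vahle theorem from scratch, and as it stands the attempt has a gap that is not merely technical. The pivot of your argument is the claim that $*$-regularity of $\lone(\Gamma)$ formally yields \Cstar-uniqueness. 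This cannot be taken for granted: the two properties are genuinely different. The implication generally recorded in the literature is the opposite one --- every \Cstar-unique group is $*$-regular --- and Boidol's later work \cite{boidol1984} exists precisely because $*$-regularity is not sufficient; it exhibits amenable $*$-regular groups that are not \Cstar-unique. The one-line justification you offer (``the induced map on closed ideals is injective'') conflates injectivity of $P \mapsto P \cap \lone(\Gamma)$ on \emph{primitive} ideals with the statement that an arbitrary non-zero closed ideal of $\Cstar(\Gamma)$ meets $\lone(\Gamma)$; passing from the former to the latter requires knowing that the hull of such an ideal has closed image in $\mathrm{Prim}_* \lone(\Gamma)$, which is essentially the assertion to be proved. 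So even if you carried out the entire inductive construction of $*$-regularity, the desired conclusion would not follow from it.

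Beyond this, the inductive scheme itself is a programme rather than a proof: the decomposition over $\widehat{Z}$, the uniform control of spectra over the continuum of central characters, and the permanence under finite extensions are each announced but not carried out, and you yourself flag the central-character step as ``the main obstacle''. The ingredients you cite (Gromov's theorem, Hulanicki's symmetry of $\lone(\Gamma)$ for polynomial growth, Pt\'ak's identity $\|x\|_{\Cstar(\Gamma)} = r_{\lone(\Gamma)}(x)$ for $x = x^*$) are correct but do not by themselves exclude the real enemy, namely shrinkage of the $\lone$-spectrum upon completion in a smaller \Cstar-norm. If the goal is a self-contained proof, you should either target \Cstar-uniqueness directly (producing $\lone$-elements inside a given non-zero ideal, as the paper's Fourier-analytic argument does for $\ZZ^n$), or, as the paper does, cite \cite[Satz 2]{boidolleptinschurmannwahle1978} for the general case and prove only the abelian case, which is all the rest of the paper requires.
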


In this work we will only need to apply \cref{thm:polynomial-growth-group-cstar-unique} in the special case of finitely generated torsion-free abelian groups, that is groups isomorphic with $\ZZ^n$ for some $n \in \NN$.  For the sake of a self-contained presentation, we give a short and direct proof in this case.

\begin{proof}[Proof of \cref{thm:polynomial-growth-group-cstar-unique} for $\ZZ^n$]
  It suffices to show that $\lone(\ZZ^n) \subseteq \Cstar(\ZZ^n)$ has the ideal intersection property.  Consider the Schwartz algebra
  \begin{gather*}
    \cS(\ZZ^n) = \{f \in \lone(\ZZ^n) \mid \forall k \in \NN: f(x)|x|^k \to 0 \text{ as } x \lra \infty\}
  \end{gather*}
  and the Fourier isomorphism $\cF \colon \Cstar(\ZZ^n) \lra \cont(\bT^n)$.  Then $\cF(\cS(\ZZ^n)) = \cont^\infty(\bT^n)$ is the algebra of smooth functions and it suffices to show that $\cont^\infty(\bT^n) \subseteq \cont(\bT^n)$ has the ideal intersection property. If $I = \{f \in \cont(\bT^n) \mid f|_A \equiv 0\}$ for some proper closed subset $A \subseteq \bT^n$ is an ideal in $\cont(\bT^n)$, then there is a non-zero smooth function $f \in \contc^\infty(\bT^n \setminus A) \subseteq I$. So $I \cap \cont^\infty(\bT^n) \neq 0$.  
\end{proof}

\subsection{C*-simplicity}
\label{sec:cstar-simplicity}

In this section we recall some terminology from the theory of \Cstar-simple groups, and state one result which is needed for our work and can be directly deduced from the literature.  A group $\Gamma$ is called \Cstar-simple if $\Cstarred(\Gamma)$ is simple.  It is clear that every \Cstar-simple group has the \lone-ideal intersection property.

As proven in \cite{breuillardkalantarkennedyozawa14}, a group $\Gamma$ is \Cstar-simple if and only if it acts freely on its Furstenberg boundary $\partial_{\rF} \Gamma$.  We call a stabiliser group of this action a Furstenberg subgroup.  Recall also that the amenable radical $\rR(\Gamma)$ is the largest amenable normal subgroup of $\Gamma$.  Requiring the amenable radical of a group $\Gamma$ to be a Furstenberg subgroup is equivalent to the requirement that the induced action of $\Gamma / \rR(\Gamma)$ is free.  We need the following result that is not explicitly stated in the literature.
\begin{proposition}
  \label{prop:cstar-simple-quotient}
  Let $\Gamma$ be a group whose amenable radical is a Furstenberg subgroup.  Then $\Gamma / \rR(\Gamma)$ is \Cstar-simple.
\end{proposition}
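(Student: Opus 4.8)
The plan is to apply Kennedy's criterion recalled above: $\Gamma/\rR(\Gamma)$ is \Cstar-simple precisely when the stabiliser URS of its Furstenberg boundary $\partial_{\rF}(\Gamma/\rR(\Gamma))$ is trivial, so I will show that every subgroup appearing in this URS is trivial. First I would unpack the hypothesis. Writing $N = \rR(\Gamma)$, the assumption that the Furstenberg subgroup of $\Gamma$ is $N$ means that the normal subgroup $N$ lies in the stabiliser URS of $\partial_{\rF}\Gamma$. Since a URS is a minimal closed conjugation-invariant subset of the space of subgroups and a normal subgroup is a conjugation-fixed point, the URS must be the singleton $\{N\}$. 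Equivalently, on the dense $G_\delta$ set $X_0 \subseteq \partial_{\rF}\Gamma$ of continuity points of the stabiliser map $x \mapsto \Stab_\Gamma(x)$, one has $\Stab_\Gamma(x) = N$ for all $x \in X_0$.

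Next I would pass to the quotient. Since $N$ is amenable and normal it acts trivially on the $\Gamma$-boundary $\partial_{\rF}\Gamma$, so the action factors through $\Gamma/N$; by universality this exhibits $\partial_{\rF}\Gamma$ together with the induced $\Gamma/N$-action as the Furstenberg boundary of $\Gamma/N$, i.e. $\partial_{\rF}(\Gamma/N) \cong \partial_{\rF}\Gamma$ as $\Gamma/N$-spaces (see \cite{breuillardkalantarkennedyozawa14}). Because $N$ fixes every point, the stabilisers transform in the obvious way: as $N \subseteq \Stab_\Gamma(x)$ for all $x$, the image of $\Stab_\Gamma(x)$ under the quotient map $\Gamma \to \Gamma/N$ is exactly $\Stab_{\Gamma/N}(x) = \Stab_\Gamma(x)/N$. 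Since the quotient map is continuous in the Chabauty topology on the subgroups containing $N$, the points of $X_0$ remain continuity points of the stabiliser map for the $\Gamma/N$-action.

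Combining the two steps, for $x \in X_0$ we obtain $\Stab_{\Gamma/N}(x) = N/N = 1$, so the stabiliser URS of $\partial_{\rF}(\Gamma/N)$ is $\{1\}$. By Kennedy's criterion $\Gamma/N = \Gamma/\rR(\Gamma)$ is therefore \Cstar-simple. I expect the main technical point to be the second paragraph: justifying cleanly that the Furstenberg boundary is preserved under the quotient by the amenable radical and that the stabiliser URS transforms by $H \mapsto H/N$, including the matching of continuity points for the two actions. Everything else follows directly from the hypothesis and the recalled characterisation of \Cstar-simplicity.
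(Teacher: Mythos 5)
Your proof is correct, but it follows a genuinely different route from the one in the paper. The paper disposes of the statement in two sentences by invoking \cite[Corollary 8.5]{kawabe17}, which asserts that the \Cstar-algebra generated by the image of the quasi-regular representation associated with a Furstenberg subgroup is simple; since the hypothesis makes the Furstenberg subgroup equal to the normal subgroup $\rR(\Gamma)$, that quasi-regular representation is the left regular representation of $\Gamma/\rR(\Gamma)$ inflated to $\Gamma$, and the algebra in question is precisely $\Cstarred(\Gamma/\rR(\Gamma))$. You instead argue entirely on the dynamical side: minimality of the URS forces it to be the singleton $\{\rR(\Gamma)\}$; the amenable radical acts trivially on $\partial_{\rF}\Gamma$, so that $\partial_{\rF}\Gamma$ with the induced action is also the Furstenberg boundary of $\Gamma/\rR(\Gamma)$ (as in \cite{breuillardkalantarkennedyozawa14}); and the stabiliser URS of the quotient action is then trivial, so Kennedy's criterion \cite{kennedy15-cstarsimplicity} applies. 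Both arguments are sound. The paper's version is shorter because it outsources all the boundary dynamics to Kawabe's theorem; yours amounts to a self-contained proof of the special case of that theorem in which the Furstenberg subgroup is normal, at the price of the bookkeeping you correctly flag as the technical core --- chiefly the identification $\partial_{\rF}(\Gamma/\rR(\Gamma)) \cong \partial_{\rF}\Gamma$ and the fact that, for discrete groups, $H \mapsto H/\rR(\Gamma)$ is Chabauty-continuous on subgroups containing $\rR(\Gamma)$, so continuity points of the stabiliser map survive the passage to the quotient.
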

\begin{proof}
  It follows from \cite[Corollary 8.5]{kawabe17} that the \Cstar-algebra generated by the image of the quasi-regular representation with respect to a Furstenberg subgroup is simple.  So by assumption $\Cstarred(\Gamma / \rR(\Gamma)) = \lambda_{\Gamma/\rR(\Gamma)}(\Cstarred(\Gamma))$ is simple.
\end{proof}

\subsection{Twisted C*-dynamical systems}
\label{sec:twisted-cstar-dynamical-systems}

A twisted \Cstar-dynamical system is a tuple $(A, \Gamma, \alpha, \sigma)$ where $A$ is a \Cstar-algebra, $\Gamma$ is a group and $\alpha \colon \Gamma \to \Aut(A)$, $\sigma \colon \Gamma \times \Gamma \to \cU(\rM(A))$ are maps satisfying
\begin{align*}
  \alpha_{g_1} \circ \alpha_{g_2} & = \Ad(\sigma(g_1,g_2)) \circ \alpha_{g_1g_2} \\
  \sigma(g_1,g_2)\sigma(g_1g_2,g_3) & = \alpha_{g_1}(\sigma(g_2, g_3)) \sigma(g_1, g_2 g_3) \\
  \sigma(g_1,e) & = \sigma(e,g_1) = 1
\end{align*}
for all $g_1,g_2,g_3 \in \Gamma$.  The special case of $A = \CC$ corresponds exactly to a group $\Gamma$ with the choice of a 2-cocycle in $\rZ^2(\Gamma, \rS^1)$.

Denoting by $\pi \colon A \to \bo(H)$ the universal representation of $A$, the reduced twisted crossed product associated with $(A, \Gamma, \alpha, \sigma)$ is the \Cstar-subalgebra $A \rtimes_{\alpha, \sigma, \mathrm{red}} \Gamma \subseteq \bo(H \ot \ltwo(\Gamma))$ generated by the elements $\pi_\alpha(a)\lambda_\sigma(g)$ for $a \in A$, $g \in \Gamma$, where $\pi_\alpha \colon A \to \bo(H \ot \ltwo(\Gamma))$ is given by
\begin{gather*}
  \pi_\alpha(a)(\xi \ot \delta_g) = \pi(\alpha_g^{-1}(a))\xi \ot \delta_g
\end{gather*}
and $\lambda_\sigma$ is the twisted regular representation given by
\begin{gather*}
  \lambda_\sigma(\gamma)(\xi \ot \delta_g) = \pi(\sigma((\gamma g)^{-1}, \gamma))\xi \ot \delta_{\gamma g}
  \eqstop
\end{gather*}
Here $a \in A$, $\gamma, g \in \Gamma$ and $\xi \in H$.  If $A = \CC$, then the reduced twisted crossed product \mbox{\Cstar-algebra} is equal to the reduced twisted group \Cstar-algebra where the twist is given by the 2-cocycle associated to the twisted \Cstar-dynamical system.

The $*$-algebra generated by the elements $\pi_\alpha(a)\lambda_\sigma(g)$ for $a \in A$, $g \in \Gamma$ can be equipped with the \lone-norm
\begin{gather*}
  \left \| \sum_{g \in \Gamma} \pi_\alpha(a_g) \lambda_\sigma(g) \right \|_{\lone} = \sum_{g \in \Gamma} \| a_g\| 
  \eqstop
\end{gather*}
Its completion with respect to this norm is the twisted \lone-crossed product $A \rtimes_{\alpha, \sigma, \lone} \Gamma$.

\begin{convention}
  \label{conv:restriction-twisted-dynamical-system}
  Below we will need to consider restrictions of a twisted \Cstar-dynamical system $(A, \Gamma, \alpha, \sigma)$ to subgroups $\Lambda \leq \Gamma$. For notational ease, we will denote the restrictions of $\alpha$ and $\sigma$ by the same symbols. 
\end{convention}

\subsection{Twisted groupoid algebras}
\label{sec:twisted-groupoid-algebras}

For material on {\'e}tale Hausdorff groupoids and groupoid twists we refer the reader to \cite{simsszabowilliams2020}.  We recall the definition of a twist over a groupoid and its convolution algebra, which will be used in this article.
\begin{definition}
  \label{def:groupoid-twist-definition}
  Let $\cG$ be an {\'e}tale groupoid. A \emph{twist} over $\cG$ is a sequence
  \begin{center}
    \begin{tikzcd}
      & \unitspace{\cG} \times \rS^1 \arrow[r, "i"] & \cE                \arrow[r, "q"]       & \cG          
    \end{tikzcd}
  \end{center}
  where $\unitspace{\cG} \times \rS^1$ is the trivial group bundle with fibres $\rS^1$, where $\cE$ is a locally compact Hausdorff groupoid with unit space $i(\unitspace{\cG} \times \{1\})$, and $i$ and $q$ are continuous groupoid homomorphisms that restrict to homeomorphisms of unit spaces, such that
  \begin{itemize}
  \item $i$ is injective,
  \item $\cE$ is a locally trivial $\cG$-bundle, that is for every point $g \in \cG$ there is an open neighbourhood $U$ that is a bisection and on which there exists a continuous section $S \colon U \lra \cE$ satisfying $q \circ S = \id_U$, and such that the map $(g , \mu) \mapsto i(r(g), \mu) S(g)$ is a homeomorphism of $U \times \rS^1$ onto $q^{-1}(U)$
  \item $i(\unitspace{\cG} \times \rS^1)$ is central in $\cE$, that is $i(r(g), \mu) g = g i(s(g), \mu)$ holds for all $g \in \cE$ and $\mu \in \rS^1$, and
  \item $q^{-1}(\unitspace{\cG}) = i(\unitspace{\cG} \times \rS^1)$.
  \end{itemize}
\end{definition}

\begin{notation}
  \label{not:groupoid-twist-conventions}
  A twist as in the definition above will be denoted by $(\cE, i, q)$ or simply by $\cE$ if no confusion is possible. Further, we will frequently identify the unit space of $\cE$ and $\cG$.
\end{notation}
Given a twist $q \colon \cE \thra \cG$ over a locally compact {\'e}tale Hausdorff groupoid $\cG$ we write
  \begin{equation*}
    \cC (\cG , \cE)
    := \{f \in \contc (\cE) \mid f(\mu \cdot g) = \ol{\mu} f(g) \text{ for all $g \in \cE$ and $\mu\in \rS^1$} \}
    \eqcomma
  \end{equation*}
  which becomes a $*$-algebra when equipped with the following convolution product and involution \cite[Proposition 5]{kumjian86}.  We consider the action $\rS^1 \grpaction{} \CC \times \cE$ given by $\mu(z,g) = (\ol{\mu} z, \mu g)$ and let $\CC \times_{\rS^1} \cE$ be the quotient, which is a complex line bundle over $\cG$. It carries a partially defined product (that is, it is a small category) given by $[z_1,g_1][z_2,g_2] = [z_1z_2, g_1g_2]$ for any pair of composable elements $g_1, g_2 \in \cE$. The space $\cC (\cG , \cE)$ is isomorphic with the space of sections $\Gamma(\CC \times_{\rS^1} \cE \to \cG)$ by mapping $f \in \cC (\cG , \cE)$ to the section $q(g) \mapsto [f(g), g]$.  This is well-defined since $(f(\mu g),  \mu g) = (\ol{\mu} f(g), \mu g) = \mu (f(g), g)$ holds. The space $\Gamma(\CC \times_{\rS^1} \cE \to \cG)$ carries the natural involution $f^*(g) = \ol{f}(g^{-1})$ and the convolution product
  \begin{gather*}
    f_1 * f_2(g) = \sum_{g_1 g_2 = g} f_1(g_1) f_2(g_2)
    \eqcomma
  \end{gather*}
for $f,f_1,f_2 \in \Gamma(\CC \times_{\rS^1} \cE)$ and $g \in \cG$.
\begin{remark}
  \label{rem:conventions}
  The attentive reader will have noticed that our conventions for $\cC( \cG, \cE)$ slightly differ from the usual requirement that $f(\mu g) = \mu f(g)$.  This goes hand-in-hand with the divergence from Kumjian's convention $\mu(z,g) = (\mu z, \mu g)$. Our choice of conventions is justified by the following example, which is the basis for understanding the construction presented in \cref{sec:groupoid-presentation}.

  Given a discrete group $\Gamma$ and a cocycle $\sigma \in \rZ^2(\Gamma, \rS^1)$ one associates the central extension $\rS^1 \hra \Gamma \times_\sigma \rS^1 \thra \Gamma$, where the product in $\Gamma \times_\sigma \rS^1$ is given by $(\gamma_1, \mu_1) (\gamma_2, \mu_2) = (\gamma_1\gamma_2, \sigma(\gamma_1, \gamma_2)\mu_1\mu_2)$.  We observe that $(\Gamma, \Gamma \times_\sigma \rS^1)$ is a twisted groupoid and one expects an identification $\cC(\Gamma, \Gamma \times_\sigma \rS^1) \cong \CC[\Gamma, \sigma]$.  This is the case with our conventions, while the usual conventions yield the isomorphism $\cC(\Gamma, \Gamma \times_\sigma \rS^1) \cong \CC[\Gamma, \ol{\sigma}]$. 

  Let us elaborate. For $\gamma \in \Gamma$, we define the section $f_\gamma(\gamma') = \delta_{\gamma, \gamma'} [1, \gamma, 1] \in \CC \times_{\rS^1} (\Gamma \times_\sigma \rS^1)$.  Observe that the function in $\cC(\Gamma, \Gamma \times_\sigma \rS^1)$ associated with it is the unnatural map $(\gamma, \mu) \mapsto \delta_{\gamma, \gamma'}\ol{\mu}$.  We show that the map $\gamma \mapsto f_\gamma$ is $\sigma$-twisted multiplicative.  Indeed, for $\gamma_1, \gamma_2, g \in \Gamma$ we make the calculation
  \begin{align*}
    f_{\gamma_1} * f_{\gamma_2}(g)
    & =
      \sum_{g = g_1 g_2} f_{\gamma_1}(g_1) f_{\gamma_2}(g_2) \\
    & =
      \delta_{g, \gamma_1\gamma_2} [1, \gamma_1, 1] [1, \gamma_2, 1] \\
    & =
      \delta_{g, \gamma_1\gamma_2} [1, \gamma_1\gamma_2, \sigma(\gamma_1, \gamma_2)] \\
    & =
      \delta_{g, \gamma_1\gamma_2} [\sigma(\gamma_1, \gamma_2), \gamma_1\gamma_2, 1] \\
    & =
      \sigma(\gamma_1, \gamma_2) \delta_{g, \gamma_1\gamma_2} [1, \gamma_1\gamma_2, 1] \\
    & =
      \sigma(\gamma_1, \gamma_2) f_{\gamma_1\gamma_2}(g)
      \eqstop
  \end{align*}
  This justifies our conventions sufficiently.
\end{remark}

\begin{convention}
  \label{conv:twist-over-group}
  If $\Gamma$ is a group, then a twist over $\Gamma$ is the same as an extension $1 \to \rS^1 \to E \to \Gamma \to 1$ and hence up to choice of a section $\Gamma \to E$ the same as an element in $\rH^2(\Gamma, \rS^1)$.  In view of \cref{rem:conventions}, in some situations we continue to use the notation $\Cstarred(\Gamma, E)$ for the associated twisted group \Cstar-algebra.
\end{convention}
We will also use the following completions of the twisted groupoid algebra $\cC(\cG, \cE)$.  
\begin{itemize}
\item $\Lone (\cG, \cE)$ denotes its $I$-norm completion, which is a Banach $*$-algebra. Specifically, it is the completion of $\cC (\cG, \cE)$ in the norm 
\begin{align*}
	\Vert f \Vert_I = \max \{ \sup_{x \in \unitspace{\cG}} \{ \sum_{g\in \cE^x} \vert f(g)\vert \}, \sup_{x \in \unitspace{\cG}} \{ \sum_{g\in \cE_x} \vert f(g)\vert \} \}
\end{align*}
for $f \in \cC(\cG,\cE)$.
\item $\Cstar(\cG, \cE)$ denotes the enveloping $\Cstar$-algebra of $\Lone (\cG, \cE)$, and
\item $\Cstarred(\cG, \cE)$ denotes the reduced $\Cstar$-algebra completion of $\Lone(\cG, \cE)$.
\end{itemize}
For a locally compact {\'e}tale Hausdorff groupoid $\cG$ we denote the interior of its isotropy groupoid by $\IsoInt{\cG}$. For $x \in \unitspace{\cG}$, let $\IsoInt{\cG}_x$ be the group appearing in the fibre $x$ of $\IsoInt{\cG}$.  It has been shown in \cite[Corollary 2.11]{armstrong2022-uniqueness} that for a twist $\cE$ over a locally compact {\'e}tale Hausdorff groupoid $\cG$ 
\begin{itemize}
\item the interior of the isotropy subgroupoid $\IsoInt{\cE}$ is a twist over the interior of the isotropy subgroupoid $\IsoInt{\cG}$, and
\item for each $x \in \unitspace{\cG}$, the isotropy group $\IsoInt{\cE}_x$ is a twist over the isotropy group $\IsoInt{\cG}_x$. 
\end{itemize}
We will apply the following result on the ideal intersection property for twisted groupoid \mbox{\Cstar-algebras} associated with a twist over a locally compact {\'e}tale Hausdorff groupoid and its restriction to the interior of the isotropy bundle. We summarise results from \cite[Proposition 6.1 and Theorem 6.3]{armstrong2022-uniqueness}, which generalised previous work in the untwisted case published in \cite{brownnagyreznikoffsimswilliams2016}.
\begin{theorem}[\cite{armstrong2022-uniqueness}]
  \label{thm:essential-monomorphism}
  Let $\cE$ be a twist over a second-countable locally compact {\'e}tale Hausdorff groupoid $\cG$. There is an injective $\ast$-homomorphism $\iota \colon \Cstarred(\IsoInt{\cG}, \IsoInt{\cE}) \to \Cstarred(\cG, \cE)$ such that
  \begin{equation*}
    \iota (f) (g) = 
    \begin{cases}
      f(g) & \text{if } g \in \IsoInt{\cE} \\
      0 & \text{if } g \not \in \IsoInt{\cE}\eqcomma
    \end{cases}
  \end{equation*}
  for all $f \in \cC(\IsoInt{\cG}, \IsoInt{\cE})$ and all $g \in \cE$.  The image of $\iota$ has the ideal intersection property in $\Cstarred(\cG, \cE)$.
\end{theorem}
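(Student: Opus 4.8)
The plan is to treat the statement in two stages. First I would realise $\iota$ as the extension-by-zero map and verify it is an injective $\ast$-homomorphism; this is modelled on the untwisted argument of \cite{brownnagyreznikoffsimswilliams2016}, the extra labour being to carry the $\rS^1$-bundle structure of $\cE$ through every step. Since $\IsoInt{\cG}$ is by definition open in $\cG$, the set $\IsoInt{\cE} = q^{-1}(\IsoInt{\cG})$ is open in $\cE$. For $f \in \cC(\IsoInt{\cG}, \IsoInt{\cE})$ the support is a compact subset of the open set $\IsoInt{\cE}$, so extending by zero yields a function that is continuous on all of $\cE$ (it is continuous on $\IsoInt{\cE}$ and on the open complement of its support, and these cover $\cE$), has compact support, and retains the equivariance $f(\mu g) = \ol{\mu} f(g)$; hence $\iota(f) \in \cC(\cG, \cE)$. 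Compatibility with the involution is immediate because $g \mapsto g^{-1}$ preserves $\IsoInt{\cE}$, while compatibility with convolution uses that $\IsoInt{\cG}$ is an open subgroupoid: a product $g = g_1 g_2$ can have both factors in $\IsoInt{\cE}$ only if $g \in \IsoInt{\cE}$, and for such $g$ the composable pairs inside $\IsoInt{\cE}$ are exactly those entering the convolution computed in $\cC(\IsoInt{\cG}, \IsoInt{\cE})$.

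To pass to the reduced completions I would compare the fibrewise regular representations on $\ltwo(\cG_x)$: since $\iota(f)$ is supported on the isotropy, the operator it induces is block diagonal along the decomposition of $\cG_x$ into cosets of the isotropy group, and on each block it acts as the regular representation of the twisted isotropy group $(\IsoInt{\cG}_{x}, \IsoInt{\cE}_{x})$. This gives $\| \iota(f) \|_{\Cstarred(\cG, \cE)} = \| f \|_{\Cstarred(\IsoInt{\cG}, \IsoInt{\cE})}$, so $\iota$ extends to an isometric, in particular injective, $\ast$-homomorphism of the reduced completions.

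For the ideal intersection property I would first build a faithful conditional expectation $E \colon \Cstarred(\cG, \cE) \to B := \im(\iota)$, given on sections by restriction to $\IsoInt{\cE}$ and extension by zero. It is a contractive idempotent onto $B$, and its faithfulness is inherited from the diagonal expectation $E_0 \colon \Cstarred(\cG, \cE) \to \conto(\unitspace{\cG})$: since $\unitspace{\cG} \subseteq \IsoInt{\cG}$ we have $E_0 = E_0 \circ E$, and $E_0$ is faithful by faithfulness of the reduced norm. Now let $I \trianglelefteq \Cstarred(\cG, \cE)$ be a non-zero ideal and pick $a \in I$ with $a \geq 0$ and $a \neq 0$, so that $d_a := E_0(a) \neq 0$. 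Writing $q \colon \Cstarred(\cG, \cE) \to \Cstarred(\cG, \cE)/I$ for the quotient map, it suffices to show that $q|_B$ is not isometric: being a $\ast$-homomorphism it is then non-injective, and $\ker(q|_B) = I \cap B \neq 0$.

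The heart of the matter, and the step I expect to be the main obstacle, is a localisation lemma producing, for each $\epsilon > 0$, a function $h \in \conto(\unitspace{\cG})$ with $0 \leq h \leq 1$ and $h(x_0) = 1$ at a suitable unit $x_0$, such that $\| h a h - h E(a) h \| < \epsilon$. Granting it, $h a h \in I$ forces $\| q(h E(a) h) \| = \| q(h E(a) h - h a h) \| < \epsilon$, while $\| h E(a) h \| \geq h(x_0)^2 d_a(x_0)$ stays bounded away from zero once $x_0$ is chosen with $d_a(x_0)$ near $\| d_a \| > 0$; letting $\epsilon \to 0$ shows that $q|_B$ is not bounded below, as required. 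To prove the lemma one localises the section of $a - E(a)$, which is supported on $\cE \setminus \IsoInt{\cE}$. The key geometric input is the density of \emph{effective units}, that is $x_0 \in \unitspace{\cG}$ with $\cG_{x_0}^{x_0} = \IsoInt{\cG}_{x_0}$: at such $x_0$ a compactness argument shows that shrinking $\supp(h)$ around $x_0$ makes the off-isotropy contribution negligible, for otherwise a limit of off-isotropy elements would produce an element of isotropy at $x_0$ lying outside the interior. Establishing this density is precisely where second-countability enters, upgrading effectiveness of the quotient groupoid to topological principality through a Baire category argument; the twist itself contributes only bookkeeping via the line bundle $\CC \times_{\rS^1} \cE$, the $\rS^1$-equivariance guaranteeing that all constructions descend.
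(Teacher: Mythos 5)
The paper offers no proof of this statement: it is quoted from \cite[Proposition 6.1 and Theorem 6.3]{armstrong2022-uniqueness}, which generalise the untwisted results of \cite{brownnagyreznikoffsimswilliams2016}. Your sketch is a correct reconstruction of essentially that argument --- extension by zero along the open subgroupoid $\IsoInt{\cE}$, block-diagonality of the regular representations over cosets of the isotropy to obtain isometry, a faithful conditional expectation onto the image, and a localisation estimate at a dense set of units where the isotropy coincides with its interior, the density coming from a Baire category argument that consumes second countability. The only point you pass over too quickly is the claim that restriction to $\IsoInt{\cE}$ is reduced-norm contractive, hence yields a well-defined expectation $E$ on the completion: this is itself one of the two cited results, and its proof already needs the density of strongly fixed units (one compresses $\pi_x(f)$ to the block diagonal at such a unit, where the full isotropy equals its interior, and then uses lower semicontinuity of $x \mapsto \|\pi_x(f)\|$), so it belongs with, rather than before, what you call the heart of the matter.
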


\section{The \texorpdfstring{${\bm \lone}$}{l1}-ideal intersection property for twisted C*-algebraic dynamical systems: basic results}
\label{sec:lone-ideal-intersection-property-basics}

In general, an inclusion of (\Cstar-)algebras $A \subseteq B$ has the ideal intersection property, if every non-zero ideal in $B$ has non-zero intersection with $A$.  We are interested in this property for the inclusion of $\lone$-algebras into \Cstar-algebras associated with twisted \Cstar-dynamical systems.  In this section we therefore define the ideal intersection property in this generality, before deriving some useful reformulations and results which come in handy later.

\begin{definition}
  \label{def:l1-ideal-intersection-property-crossed-products}
  A twisted \Cstar-dynamical system $(A, \Gamma, \alpha, \sigma)$ is said to have the \lone-ideal intersection property if every non-zero ideal in $A \rtimes_{\alpha, \sigma, \mathrm{red}} \Gamma$ has non-zero intersection with $A \rtimes_{\alpha, \sigma, \lone} \Gamma$.
\end{definition}

In situations, where part of the twisted action is trivial, e.g. for twisted group \Cstar-algebras associated with a pair $(\Gamma, \sigma)$ or untwisted crossed products associated with an action $\Gamma \grpaction{} X$, we simplify notation and say that $(\Gamma, \sigma)$, respectively $\Gamma \grpaction{} X$, has the ideal intersection property.

\begin{remark}
  \label{remark:relation-to-simplicity-and-C-star-uniqueness}
  Let us put the notion introduced in the previous definition into context.
  \begin{itemize}
  \item Every twisted \Cstar-dynamical system with a simple crossed product trivially has the \lone-ideal intersection property.  Such systems arise from \Cstar-simple groups \cite{bryderkennedy2018-twisted}.
  \item For an amenable twisted \Cstar-dynamical system $(A,\Gamma, \alpha, \sigma)$ the \lone-ideal intersection property is equivalent to \Cstar-uniqueness of $A \rtimes_{\alpha, \sigma, \lone} \Gamma$. This can be inferred from the fact that reduced and universal crossed products of such systems coincide combined with \cite[Proposition 2.4]{barnes1983}.  Alternatively, \cref{prop:equivalent-reformulations} below can be employed.
\end{itemize}
\end{remark}
The following reformulation shows that the \lone-ideal intersection property for twisted \mbox{\Cstar-dynamical} systems is a question of minimality of the reduced \Cstar-algebra norm on $A \rtimes_{\lone, \sigma} \Gamma$.  It will be frequently used without further reference.
\begin{proposition}
  \label{prop:equivalent-reformulations}
  Let $(A,\Gamma,\alpha, \sigma)$ denote a twisted $C^*$-dynamical system. The following conditions are equivalent.
  \begin{enumerate}
  \item \label{it:equiv:intersection-property}
    $(A,\Gamma,\alpha, \sigma)$ has the \lone-ideal intersection property.
  \item \label{it:equiv:faithful-homomorphism}
    If a $*$-homomorphism into a \Cstar-algebra $\pi \colon A \rtimes_{\alpha, \sigma, \mathrm{red}} \Gamma \to B$ is injective on $A \rtimes_{\alpha, \sigma, \lone} \Gamma$ then it is injective itself.
  \item \label{it:equiv:minimal-norm}
    The reduced \Cstar-norm on $A \rtimes_{\alpha, \sigma, \lone} \Gamma$ is minimal.    
  \end{enumerate}
\end{proposition}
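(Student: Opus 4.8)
The plan is to treat $L := A \rtimes_{\alpha,\sigma,\lone}\Gamma$ as a dense $*$-subalgebra of $C := A \rtimes_{\alpha,\sigma,\mathrm{red}}\Gamma$ via the (faithful) regular representation, exactly as in \cref{def:l1-ideal-intersection-property-crossed-products}, writing $\|\cdot\|_r$ for the reduced norm and $\|\cdot\|_{\lone}$ for the $\lone$-norm. With this convention the $\lone$-ideal intersection property reads: every non-zero closed ideal $I \trianglelefteq C$ satisfies $I \cap L \neq 0$. I would organise the argument around condition (i) as a hub, proving $(i)\Leftrightarrow(ii)$ by a kernel/quotient correspondence and $(i)\Leftrightarrow(iii)$ by translating minimality of a C*-norm into a statement about ideals of $C$. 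The only genuine analytic inputs are the standard fact that an injective $*$-homomorphism between C*-algebras is isometric, together with the density of $L$ in $C$.

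For $(i)\Rightarrow(ii)$, given $\pi \colon C \to B$ injective on $L$, I set $I := \ker\pi$ and note $I \cap L = \ker(\pi|_L) = 0$; condition (i) then forces $I = 0$, so $\pi$ is injective. Conversely, for $(ii)\Rightarrow(i)$, given a non-zero closed ideal $I \trianglelefteq C$ I apply (ii) to the quotient map $\pi \colon C \to C/I$: since $\ker\pi = I \neq 0$, $\pi$ is not injective, so by the contrapositive of (ii) it fails to be injective on $L$, i.e.\ $I \cap L = \ker(\pi|_L) \neq 0$. Both directions are purely formal.

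For $(i)\Leftrightarrow(iii)$ I would read (iii) as the assertion that $\|\cdot\|_r$ is a minimal element among the C*-norms on $L$ (equivalently on the underlying $*$-algebra $\cC$), meaning that no C*-norm is strictly dominated by it. The key step is a dictionary: a C*-norm $p \leq \|\cdot\|_r$ on $L$ is the same datum as a surjective $*$-homomorphism $\theta \colon C \to C_p$ onto the completion $C_p$ extending $\id_L$; its kernel $I := \ker\theta$ is a closed ideal with $I \cap L = 0$ (since $p$ is a norm on $L$), and because an injective $*$-homomorphism is isometric and $L$ is dense one has $I = 0 \iff \theta$ isometric $\iff p = \|\cdot\|_r$. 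Conversely a non-zero ideal $I \trianglelefteq C$ with $I \cap L = 0$ yields the quotient norm $p_I(x) = \|x + I\|_{C/I}$, a C*-norm on $L$ lying strictly below $\|\cdot\|_r$. Thus non-zero ideals of $C$ meeting $L$ trivially correspond exactly to C*-norms strictly below the reduced norm, and $\|\cdot\|_r$ is minimal precisely when no such ideal exists, which is condition (i).

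The main obstacle is conceptual rather than computational: fixing the correct meaning of ``minimal''. Since the C*-norms on $L$ are only partially ordered and $\|\cdot\|_r$ need not be comparable to every C*-norm (for non-amenable $\Gamma$ there exist completions strictly larger than the reduced one), ``minimal'' must be interpreted as a minimal element, not a least element. With that reading the dictionary above is exact, and the two points demanding care are the verification that strict domination of norms corresponds to a non-zero kernel (where isometry of injective $*$-homomorphisms and density of $L$ enter) and the preliminary observation that the regular representation embeds $L$ faithfully into $C$ with every C*-norm dominated by $\|\cdot\|_{\lone}$, so that all the relevant quotients $\theta \colon C \to C_p$ are genuinely defined.
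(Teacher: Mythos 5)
Your proposal is correct and uses essentially the same ingredients as the paper's proof: the kernel/quotient correspondence for the equivalence of \ref{it:equiv:intersection-property} and \ref{it:equiv:faithful-homomorphism}, and the fact that injective $*$-homomorphisms are isometric combined with density of the $\lone$-crossed product for the norm condition. The only difference is organisational — you prove both equivalences with \ref{it:equiv:intersection-property} as a hub, whereas the paper runs the cycle \ref{it:equiv:intersection-property} $\Rightarrow$ \ref{it:equiv:faithful-homomorphism} $\Rightarrow$ \ref{it:equiv:minimal-norm} $\Rightarrow$ \ref{it:equiv:intersection-property} — and your reading of ``minimal'' as a minimal element of the partial order of \Cstar-norms is exactly the one the paper's proof implicitly uses.
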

\begin{proof}
  Suppose that $(A,\Gamma,\alpha, \sigma)$ has the \lone-ideal intersection property and let $\pi \colon A \rtimes_{\alpha, \sigma, \mathrm{red}} \Gamma \to B$ be injective on $A \rtimes_{\alpha, \sigma, \lone} \Gamma$.  Then $\ker \pi \cap A \rtimes_{\alpha, \sigma, \lone} \Gamma = \{0\}$ implies that $\ker \pi = 0$. So $\pi$ itself is injective.

  Assume next that \cref{it:equiv:faithful-homomorphism} holds and let $\nu \colon A \rtimes_{\alpha, \sigma, \lone} \Gamma \to \RR_{\geq 0}$ be a \Cstar-norm dominated by the reduced \Cstar-norm.  Denoting by $B = \ol{A \rtimes_{\alpha, \sigma, \lone} \Gamma}^{\nu}$ the completion, the natural $*$-homomorphism $\pi \colon A \rtimes_{\alpha, \sigma, \mathrm{red}} \Gamma \to B$ is faithful on the $\lone$-crossed product.  The assumption implies that $\pi$ is injective and henceforth isometric.  Thus, $\nu$ is equal to the reduced \Cstar-norm.

  Now assume that \cref{it:equiv:minimal-norm} holds and let $I \unlhd A \rtimes_{\alpha, \sigma, \mathrm{red}} \Gamma$ be a non-zero ideal with quotient map $\pi \colon A \rtimes_{\alpha, \sigma, \mathrm{red}} \Gamma \to B$, which is contractive.  The assumption allows us to infer that $\ker \pi \cap A \rtimes_{\alpha, \sigma, \lone} \Gamma \neq \{0\}$.  Since $I = \ker \pi$ and $I$ was arbitrary, we conclude that $(A,\Gamma,\alpha, \sigma)$ has the \lone-ideal intersection property.
\end{proof}

\begin{remark}
  \label{rem:alekseev-kyed}
  The analogue of the minimality of the reduced \Cstar-norm featuring in \cref{it:equiv:minimal-norm} of \cref{prop:equivalent-reformulations} has previously been introduced for algebraic group rings in \cite{alekseevkyed2019} under the name \mbox{$\Cstar_{\mathrm{r}}$-uniqueness}.

  Let us also remark that an amenable group has the $\lone$-ideal intersection property if and only if it is C*-unique.
\end{remark}
We next show that the \lone-ideal intersection property is closed under directed unions, in the following precise sense.
\begin{proposition}
  \label{prop:lone-ideal-intersection-directed-union}
  Let $(A, \Gamma, \alpha, \sigma)$ be a twisted \Cstar-dynamical system.  Assume that $\Gamma = \bigcup_{i \in I} \Gamma_i$ is a directed union such that $(A, \Gamma_i, \alpha, \sigma)$ has the \lone-ideal intersection property for all $i \in I$.  Then $(A, \Gamma, \alpha, \sigma)$ has the \lone-ideal intersection property.
\end{proposition}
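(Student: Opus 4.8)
The plan is to prove the statement through the minimality characterisation of the reduced \Cstar-norm supplied by \cref{prop:equivalent-reformulations}. Write $B = A \rtimes_{\alpha, \sigma, \lone} \Gamma$ and, for each $i \in I$, let $B_i = A \rtimes_{\alpha, \sigma, \lone} \Gamma_i$ be the closed $*$-subalgebra of $B$ consisting of the elements supported on $\Gamma_i$. Since the union is directed, every finite subset of $\Gamma$ is contained in some $\Gamma_i$; hence the finitely supported elements, which are \lone-dense in $B$, all lie in $\bigcup_{i \in I} B_i$, so this union is \lone-dense in $B$. By \cref{prop:equivalent-reformulations} it suffices to show that every \Cstar-norm $\nu$ on $B$ that is dominated by the reduced \Cstar-norm $\|\cdot\|_{\mathrm{red}}$ already coincides with $\|\cdot\|_{\mathrm{red}}$.

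The core of the argument is to pin down $\nu$ on each $B_i$. Restricting a \Cstar-norm to a $*$-subalgebra again yields a \Cstar-norm, so $\nu|_{B_i}$ is a \Cstar-norm on $B_i$; denote by $\|\cdot\|_{\mathrm{red},i}$ the reduced \Cstar-norm of $B_i$. The key input, discussed below, is the isometric inclusion $A \rtimes_{\alpha, \sigma, \mathrm{red}} \Gamma_i \hookrightarrow A \rtimes_{\alpha, \sigma, \mathrm{red}} \Gamma$, that is, the restriction of $\|\cdot\|_{\mathrm{red}}$ to $B_i$ equals $\|\cdot\|_{\mathrm{red},i}$. Granting this, the inequality $\nu \leq \|\cdot\|_{\mathrm{red}}$ restricts to $\nu|_{B_i} \leq \|\cdot\|_{\mathrm{red},i}$. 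Because $(A, \Gamma_i, \alpha, \sigma)$ has the \lone-ideal intersection property, \cref{prop:equivalent-reformulations} shows that $\|\cdot\|_{\mathrm{red},i}$ is minimal, so $\nu|_{B_i} = \|\cdot\|_{\mathrm{red},i} = \|\cdot\|_{\mathrm{red}}|_{B_i}$.

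Thus $\nu$ and $\|\cdot\|_{\mathrm{red}}$ agree on the \lone-dense subalgebra $\bigcup_{i \in I} B_i$. Both are \Cstar-norms on $B$, hence both are dominated by the \lone-norm, so each is $1$-Lipschitz with respect to the \lone-norm. Two continuous functions agreeing on a dense set agree everywhere, whence $\nu = \|\cdot\|_{\mathrm{red}}$ on all of $B$. Therefore the reduced \Cstar-norm on $B$ is minimal and $(A, \Gamma, \alpha, \sigma)$ has the \lone-ideal intersection property.

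The one point that needs genuine justification, and which I expect to be the main obstacle, is the isometric embedding of reduced twisted crossed products along a subgroup inclusion $\Gamma_i \leq \Gamma$. In the untwisted case this is standard: decomposing $\ltwo(\Gamma)$ into the $\Gamma_i$-invariant blocks indexed by the right cosets $\Gamma_i \backslash \Gamma$ exhibits the defining representation of $A \rtimes_{\alpha, \sigma, \mathrm{red}} \Gamma$, restricted to the subalgebra generated by $A$ and $\Gamma_i$, as an amplification of the defining representation of $A \rtimes_{\alpha, \sigma, \mathrm{red}} \Gamma_i$, which gives the claimed isometry. The same coset decomposition persists in the presence of the cocycle $\sigma$ because the twisted regular representation preserves each block, so the only care required is in tracking the twist; alternatively one may invoke the canonical faithful conditional expectation onto $A$, which is compatible with the inclusion and detects the reduced norm.
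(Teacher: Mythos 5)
Your proof is correct and follows essentially the same route as the paper: both arguments restrict to the subalgebras $A \rtimes_{\alpha,\sigma,\mathrm{red}} \Gamma_i$, apply the hypothesis on each $\Gamma_i$ through \cref{prop:equivalent-reformulations}, and conclude by density of $\bigcup_{i \in I} A \rtimes_{\alpha,\sigma,\mathrm{red}} \Gamma_i$. The only cosmetic difference is that you invoke the minimal-norm characterisation \cref{it:equiv:minimal-norm} where the paper uses the injective-homomorphism characterisation \cref{it:equiv:faithful-homomorphism}; the isometric inclusion of reduced twisted crossed products along $\Gamma_i \leq \Gamma$ that you flag as the main point to justify is the same standard fact the paper's proof relies on implicitly.
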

\begin{proof}
  We employ the characterisation in \cref{it:equiv:faithful-homomorphism} of \cref{prop:equivalent-reformulations} of the \lone-ideal intersection property.  Let $\pi \colon A \rtimes_{\alpha, \sigma, \mathrm{red}} \Gamma \to B$ be a $*$-homomorphism whose restriction to the \lone-crossed product is injective.  The assumptions imply that for all $i \in I$ the restriction $\pi|_{A \rtimes_{\alpha, \sigma, \mathrm{red}} \Gamma_i}$ is injective and thus isometric.  Since $\bigcup_{i \in I} A \rtimes_{\alpha, \sigma, \mathrm{red}} \Gamma_i$ is dense in $A \rtimes_{\alpha, \sigma, \mathrm{red}} \Gamma$, the result follows.
\end{proof}

\section{The \texorpdfstring{$\bm \Lone$}{L1}-ideal intersection property for twisted groupoids and their isotropy bundles}
\label{section:groupoid-result-extension}

In this section we prove the \Lone-ideal intersection property for certain twisted groupoids, in the same spirit as \cite{austadortega2022} did for cocycle twisted groupoids.  In view of applications in \cref{sec:main-results}, our statements are established in slightly greater generality.

\begin{definition}
  \label{def:lone-ideal-intersection-property-groupoids}
  A twisted locally compact {\'e}tale Hausdorff groupoid $(\cG, \cE)$ is said to have the \mbox{\Lone-ideal} intersection property if every non-zero ideal of $\Cstarred(\cG, \cE)$ has non-zero intersection with $\Lone(\cG, \cE)$.
\end{definition}
The next result shows that in order to establish the \Lone-ideal intersection property for a twisted groupoid, it suffices to study its isotropy bundle.  It is a direct consequence of Armstrong's results recalled in \cref{sec:twisted-groupoid-algebras}, and its analogue in the context of \Cstar-uniqueness of cocycle twisted groupoid \Cstar-algebras was obtained in \cite[Proposition 3.2]{austadortega2022}.
\begin{proposition}
  \label{prop:isotropy-subgroupoid-sufficient}
  Let $\cG$ be a second-countable locally compact {\'e}tale Hausdorff groupoid and let $\cE$ be a twist over $\cG$. If $(\IsoInt{\cG}, \IsoInt{\cE})$ has the $L^1$-ideal intersection property, then so does $(\cG, \cE)$.
\end{proposition}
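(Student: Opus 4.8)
The plan is to reduce everything to \cref{thm:essential-monomorphism}. That theorem supplies an injective $*$-homomorphism $\iota \colon \Cstarred(\IsoInt{\cG}, \IsoInt{\cE}) \to \Cstarred(\cG, \cE)$ which is given by extension by zero on the dense subalgebra $\cC(\IsoInt{\cG}, \IsoInt{\cE})$ and whose image has the ideal intersection property. The only real work beyond invoking this result is transporting the conclusion of the hypothesis from $\Lone(\IsoInt{\cG}, \IsoInt{\cE})$ back into $\Lone(\cG, \cE)$, and this is where I would spend the care.

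First I would fix a non-zero ideal $J \unlhd \Cstarred(\cG, \cE)$ and set $B := \iota(\Cstarred(\IsoInt{\cG}, \IsoInt{\cE}))$. By \cref{thm:essential-monomorphism} the subalgebra $B$ has the ideal intersection property, so $J \cap B \neq 0$. Since $J$ is a (closed, two-sided) ideal of the ambient algebra and $B$ is a $*$-subalgebra, $J \cap B$ is a non-zero ideal of $B$; pulling it back through the isomorphism $\iota \colon \Cstarred(\IsoInt{\cG}, \IsoInt{\cE}) \to B$ produces a non-zero ideal $\iota^{-1}(J \cap B) \unlhd \Cstarred(\IsoInt{\cG}, \IsoInt{\cE})$. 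Now I would apply the hypothesis that $(\IsoInt{\cG}, \IsoInt{\cE})$ has the \Lone-ideal intersection property, which gives a non-zero element $f \in \iota^{-1}(J \cap B) \cap \Lone(\IsoInt{\cG}, \IsoInt{\cE})$. By injectivity of $\iota$ we have $\iota(f) \in J$ with $\iota(f) \neq 0$.

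It remains to verify that $\iota(f) \in \Lone(\cG, \cE)$, i.e. that $\iota$ carries the $I$-norm completion into the $I$-norm completion. I would check this on $\cC(\IsoInt{\cG}, \IsoInt{\cE})$: the extension-by-zero formula of \cref{thm:essential-monomorphism} sends such a function to an element of $\cC(\cG, \cE)$ supported entirely on $\IsoInt{\cE}$. Because the $\rS^1$-equivariance $f(\mu g) = \ol{\mu}f(g)$ makes $|f|$ descend to a function on $\cG$, the $I$-norm is computed from sums of $|f|$ over the range and source fibres $\cG^x$ and $\cG_x$. For a function supported on the interior isotropy, the only non-zero contributions come from the interior isotropy groups $\IsoInt{\cG}_x$, which sit inside those fibres, so the defining suprema are unchanged. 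Hence extension by zero is $I$-isometric and extends to an isometric $*$-homomorphism $\Lone(\IsoInt{\cG}, \IsoInt{\cE}) \to \Lone(\cG, \cE)$. This extension agrees with $\iota$ on $\cC(\IsoInt{\cG}, \IsoInt{\cE})$, and since the $I$-norm dominates the reduced \Cstar-norm, the two continuous maps into $\Cstarred(\cG, \cE)$ coincide. Therefore $\iota(f)$ is a non-zero element of $J \cap \Lone(\cG, \cE)$, establishing the \Lone-ideal intersection property for $(\cG, \cE)$.

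I expect the matching of $I$-norms to be the one place demanding attention: one must confirm that extension by zero creates no contribution from isotropy elements lying outside the interior isotropy, and that the range- and source-fibre suprema reduce exactly to suprema over the interior isotropy groups $\IsoInt{\cG}_x$. Everything else is a formal passage of ideals through the isomorphism $\iota$, so the substance of the statement is genuinely carried by \cref{thm:essential-monomorphism}.
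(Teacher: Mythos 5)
Your proposal is correct and follows essentially the same route as the paper: both reduce to \cref{thm:essential-monomorphism}, intersect the given ideal with the image of $\Cstarred(\IsoInt{\cG}, \IsoInt{\cE})$, and then apply the hypothesis on the isotropy bundle. The only difference is that you spell out the $I$-norm compatibility of the extension-by-zero map (so that $\Lone(\IsoInt{\cG}, \IsoInt{\cE})$ lands inside $\Lone(\cG, \cE)$), a point the paper treats as implicit in its identification; your verification of it is accurate.
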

\begin{proof}
Let $I \unlhd \Cstarred(\cG, \cE)$ be a non-zero ideal.  Appealing to the work of \cite{armstrong2022-uniqueness} described in \cref{thm:essential-monomorphism} and identifying $\Cstarred(\IsoInt{\cG}, \IsoInt{\cE})$ with its image in $\Cstarred(\cG, \cE)$, we find a that $J = I \cap \Cstarred(\IsoInt{\cG}, \IsoInt{\cE})$ is a non-zero ideal.  By assumption, we can thus conclude that
\begin{gather*}
  0 \neq J \cap \Lone(\IsoInt{\cG}, \IsoInt{\cE}) \subseteq I \cap \Lone(\cG, \cE)
\end{gather*}
which completes the proof.
\end{proof}
In the remainder of this section we aim to prove that if sufficiently many fibres of the isotropy bundle have the $\lone$-ideal intersection property, then the full isotropy bundle has the $\Lone$-ideal intersection property.  Following the same strategy as in \cite{austadortega2022}, we achieve this by decomposing any \Cstar-completion of $\Lone(\IsoInt{\cG}, \IsoInt{\cE})$ as a \Cstar-bundle over $\unitspace{\cG}$.  We will need the following lemma in order to describe the fibres of this bundle. It generalises \cite[Lemma 3.4]{austadortega2022}, but we give a shorter proof applicable in greater generality, which is later needed in the proof of \cref{thm:ideal-intersection-general-form}.  Given a groupoid $\cG$, we call $x \in \unitspace{\cG}$ {\em strongly fixed} if $\cG_x = \IsoInt{\cG}_x$, that is every groupoid element whose source (or range) is $x$ lies in the interior of the isotropy bundle.
\begin{lemma}
  \label{lem:L1-isometric-isomorphism}
  Let $\cE$ be a twist over a locally compact {\'e}tale Hausdorff groupoid $\cG$.  Assume that $x \in \unitspace{\cG}$ is a strongly fixed point and denote by $\res_x \colon \Lone(\cG, \cE) \lra \Lone(\cG_x, \cE_x)$ the restriction map and by $I_x$ its kernel.  Then $\res_x$ is a continuous $*$-homomorphism which induces an isometric $*$-isomorphism between $\Lone(\cG, \cE)/I_x$ and $\Lone(\cG_x, \cE_x)$.  Further, $I_x$ is the ideal generated by $\conto(\unitspace{\cG} \setminus \{x\})$. 
\end{lemma}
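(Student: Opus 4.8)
The plan is to prove the three assertions in turn, using throughout that an {\'e}tale groupoid has discrete source fibres, so that the strongly fixed point $x$ gives a discrete isotropy group $\cG_x = \IsoInt{\cG}_x$, and $\Lone(\cG_x, \cE_x)$ is just the twisted $\lone$-convolution algebra of the fibre, whose $I$-norm coincides with the $\lone$-norm. First I would check that $\res_x$ is a contractive $*$-homomorphism. The only step needing strong fixedness is multiplicativity: if $g \in \cG_x$ and $g = g_1 g_2$ in $\cG$, then $s(g_2) = x$ forces $g_2 \in \cG_x = \IsoInt{\cG}_x$, hence $r(g_2) = x$ and then $g_1 \in \cG_x$ as well, so the convolution sum defining $(f_1 * f_2)(g)$ ranges only over $\cG_x$ and agrees with the fibre convolution. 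Compatibility with the involution is immediate, and contractivity is clear since the $\lone$-norm over the single fibre $\cG_x$ is dominated by the $I$-norm on $\cG$. Consequently $I_x = \ker \res_x$ is a closed two-sided ideal, and $\res_x$ descends to a contractive injective $*$-homomorphism $\Lone(\cG,\cE)/I_x \to \Lone(\cG_x, \cE_x)$.

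Next I would upgrade this to an isometric isomorphism by exhibiting norm-exact lifts. Given a finitely supported section $h \in \cC(\cG_x, \cE_x)$, supported on distinct points $g_1, \dots, g_n \in \cG_x$, I would choose pairwise disjoint open trivialising bisections $U_i \ni g_i$ (possible by Hausdorffness and local triviality of the twist) together with bump functions, assembling a section $\wt f \in \cC(\cG,\cE)$ with $\wt f(g_i) = h(g_i)$, supported in the union of the $U_i$, and with $|\wt f| \leq |h(g_i)|$ on $U_i$. Because each $U_i$ is a bisection, every source fibre and every range fibre of $\cG$ meets each $U_i$ in at most one point, so $\|\wt f\|_I \leq \sum_i |h(g_i)| = \|h\|_{\lone}$, while evaluation along the fibre $\cG_x$ already realises this sum; hence $\|\wt f\|_I = \|h\|_{\lone}$ and $\res_x(\wt f) = h$. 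This shows the quotient norm of any $f \in \cC(\cG,\cE)$ equals $\|\res_x(f)\|_{\lone}$, and passing to the completion (both sides being continuous) gives the isometry on all of $\Lone(\cG,\cE)/I_x$. Since the range contains the dense subalgebra $\cC(\cG_x, \cE_x)$ and an isometry out of a Banach space has closed range, $\res_x$ is onto.

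Finally I would identify $I_x$ with the closed ideal $J$ generated by $\conto(\unitspace{\cG} \setminus \{x\})$. The inclusion $J \subseteq I_x$ is immediate: for $\phi \in \conto(\unitspace{\cG}\setminus\{x\})$ the section $\res_x(\phi)$ is supported on units of $\cG_x$, that is on the identity $x$ alone, where $\phi$ vanishes, so $\res_x(\phi) = 0$ and hence $\res_x(a\phi b) = 0$ for all $a,b$. For the reverse inclusion, strong fixedness enters through the fact that an element of $\cC(\cG,\cE) \cap \ker \res_x$ vanishes on the entire source fibre $s^{-1}(x) = \cG_x$. For such an $f$ I would approximate it by $f * \phi$, where $\phi \in \conto(\unitspace{\cG})$ vanishes on a neighbourhood of $x$ and equals $1$ off a small neighbourhood $V$ of $x$; since $(f * \phi)(g) = f(g)\phi(s(g))$ and $|f|$ is small wherever $s(g) \in V$, the difference $f - f*\phi$ has small $I$-norm, the estimate being uniform because $\sup_{y} \#(\cG_y \cap \supp f) < \infty$ by compact support and {\'e}taleness. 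As each $f * \phi$ lies in $J$, and $\cC(\cG,\cE) \cap \ker \res_x$ is dense in $I_x$ (correcting approximants by the norm-exact lifts from the previous step), closedness of $J$ yields $I_x \subseteq J$.

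The hard part will be the isometry in the second step: matching the quotient norm to the fibre $\lone$-norm exactly forces the bisection-by-bisection construction, so that no source or range fibre of $\cG$ accumulates mass beyond $\|h\|_{\lone}$. The same $I$-norm bookkeeping, governed by the local-finiteness constant $\sup_y \#(\cG_y \cap \supp f)$, is precisely what makes the approximation in the third step go through, so I expect both delicate points to hinge on this single combinatorial feature of bisections in an {\'e}tale groupoid.
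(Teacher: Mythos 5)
Your proposal is correct, and its overall skeleton (contractive $*$-homomorphism, isometric isomorphism onto the fibre algebra, identification of $I_x$ with the ideal generated by $\conto(\unitspace{\cG}\setminus\{x\})$ via the approximation $f \approx f * \phi$) matches the paper's. The one step where you take a genuinely different route is the isometry. The paper first obtains surjectivity by extending a fibre function with Tietze's theorem and averaging over $\rS^1$ to restore equivariance, and then proves $\|f + I_x\| \leq \|\res_x(f)\| + \veps$ by the cutoff estimate $\|f - f*g\| \leq \sup_{y \in U}\|\res_y(f)\|$ that it later reuses for the kernel identification; this relies on (upper semi-)continuity of $y \mapsto \|\res_y(f)\|$ at $x$, which the paper asserts without proof. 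You instead build explicit norm-exact lifts supported on pairwise disjoint trivialising bisections, which yields surjectivity and the exact equality $\|\wt f\|_I = \|\res_x(\wt f)\|_{\lone}$ in one stroke, and gives $\|f + I_x\| \leq \|\res_x(f)\|$ for general $f$ by subtracting from $f$ the lift of $h = \res_x(f)$ (a small step you should spell out). The price is that you still need the cutoff estimate separately for the third assertion, so your proof is slightly longer; what it buys is that it avoids any appeal to continuity of the fibre norms and makes explicit the density of $\cC(\cG,\cE)\cap\ker\res_x$ in $I_x$, a point the paper glosses over. One cosmetic fix: a function equal to $1$ off a neighbourhood of $x$ does not lie in $\conto(\unitspace{\cG})$ when the unit space is non-compact, so you should cut $\phi$ off outside the compact set $s(\supp f)$ (the paper's proof has the same slip, and only the values of $\phi$ on $s(\supp f)$ matter).
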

\begin{proof}
  It is clear that $\res_x$ is continuous and in order to show that it induces an isometric \mbox{$*$-isomorphism}, it suffices to show that $\res_x|_{\cC(\cG, \cE)}$ factors through to an isometry with dense image.

  We first prove density.  Let $f_x \in \cC(\cG_x, \cE_x)$ be arbitrary. Considering $\cE_x \subseteq \cE$ as a closed subset and making use of local compactness of the latter, Tietze's theorem provides some function $\tilde f_x \in \contc(\cE)$ such that $\tilde f_x|_{\cE_x} = f_x$.  Define
  \begin{gather*}
    f(g) = \int_{\rS^1} \mu \tilde f_x(\mu g) \rmd \mu
    \eqstop
  \end{gather*}
  Then $f \in \cC(\cG, \cE)$ holds thanks to invariance of the Haar measure, and $f|_{\cE_x} = f_x$ by $\rS^1$-equivariance of $f_x$.  This proves density of the image.
    
  Given $f \in \cC(\cG, \cE)$ and $\veps > 0$ there is a neighbourhood $U \subseteq \unitspace{\cG}$ of $x$ such that $\supp f \cap \rms^{-1}(U) \subseteq \IsoInt{\cE}$ and $\big | \|\res_y(f)\| - \|\res_x(f)\| \big | < \veps$ for all $y \in U$.  Since $\unitspace{\cG}$ is locally compact, by Tietze's theorem there is $g \in \cont(\unitspace{\cG})$ with $0 \leq g \leq 1$, $g|_{\unitspace{\cG} \setminus U} \equiv 1$ and $g(x) = 0$.  Then $f * g \in I_x$ and we find that
  \begin{gather*}
    \|f + I_x\|
    \leq
    \|f - f*g\|
    \leq
    \sup_{y \in U} \|\res_y(f)\| 
    \leq \|\res_x(f)\| + \veps
    \eqstop
  \end{gather*}
  Further, 
  \begin{gather*}
    \|\res_x(f)\|
    =
    \inf_{h \in I_x} \|\res_x(f + h)\|
    \leq
    \inf_{h \in I_x} \|f + h\| 
    =
    \|f + I_x\|
    \eqstop
  \end{gather*}
  It remains to show that $I_x$ is equal to the ideal $J$ generated by $\conto(\unitspace{\cG} \setminus \{x\})$ in $\Lone(\cG, \cE)$.  If $f \in I_x$ and $\veps > 0$, there is $\tilde f \in \cC(\cG, \cE)$ such that $\|f - \tilde f\|_I < \veps$.  Thus $\|\res_x(\tilde f)\| < \veps$ and hence we find as above $g \in \conto(\unitspace{\cG} \setminus \{x\})$ such that $\|\tilde f - \tilde f * g\|_I < \veps$.  This implies that $\|f - \tilde f * g\| < 2\veps$.  Since $\tilde f * g \in J$ and $\veps > 0$ was arbitrary, this finishes the proof.
\end{proof}
We are now ready to prove the main result of this section, which generalises \cite[Theorem 3.1]{austadortega2022}.  It is stated and proven in the generality needed for \cref{thm:ideal-intersection-general-form}.  Extending usual conventions and accepting zero-fibres, for a $*$-homomorphism $\conto(X) \to \cZ (\rM(\cA))$, we denote by $\cA_x$ the quotient of $\cA$ by the ideal generated by the image of $\conto(X \setminus \{x\})$.
 
\begin{theorem}
  \label{theorem:fiber-groups-sufficient}  
  Let $\cE$ be a twist over a second-countable locally compact {\'e}tale Hausdorff groupoid $\cG$.  Assume that there is a dense subset $D \subseteq \unitspace{\cG}$ such that $\lone(\IsoInt{\cG}_x, \IsoInt{\cE}_x) \subseteq \Cstarred(\IsoInt{\cG}_x, \IsoInt{\cE}_x)$ has the ideal intersection property for all $x \in D$.  Let $\pi \colon \Cstarred(\cG, \cE) \to \cA$ be a $*$-homomorphism into a \Cstar-algebra which is injective on $\conto(\unitspace{\cG})$, so that $\pi(\Cstarred(\IsoInt{\cG}, \IsoInt{\cE}))$ becomes a  $\conto(\unitspace{\cG})$-algebra. 
  For $x \in \unitspace{\cG}$, consider the $*$-homomorphism
  \begin{gather*}
    \pi_x \colon \Cstarred(\IsoInt{\cG}_x, \IsoInt{\cE}_x) \to \pi(\Cstarred(\IsoInt{\cG}, \IsoInt{\cE}))_x
  \end{gather*}
obtained from $\pi\vert_{\Cstarred(\IsoInt{\cG}, \IsoInt{\cE})}$ after passing to the quotient by the ideals generated by $\conto(\unitspace{\cG} \setminus \{x\})$.  If $\pi_x$ restricts to an injection of $\lone(\IsoInt{\cG}_x, \IsoInt{\cE}_x)$ for all $x \in D$, then $\pi$ is injective.
\end{theorem}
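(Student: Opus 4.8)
The plan is to reduce the statement to a fibrewise comparison of reduced norms on the isotropy bundle, and then to exploit continuity of the fibre norms on the dense $*$-subalgebra $\cC(\IsoInt{\cG}, \IsoInt{\cE})$.

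First I would pass to the isotropy bundle. Since $\ker \pi$ is an ideal of $\Cstarred(\cG, \cE)$ and the image of the injection $\iota \colon \Cstarred(\IsoInt{\cG}, \IsoInt{\cE}) \to \Cstarred(\cG, \cE)$ from \cref{thm:essential-monomorphism} has the ideal intersection property, it suffices to prove that $\pi \circ \iota$ is injective; that is, we may assume $\cG = \IsoInt{\cG}$ is a group bundle, in which every unit is strongly fixed. Next, for each $x \in D$ the hypothesis gives that $\pi_x$ is injective on $\lone(\IsoInt{\cG}_x, \IsoInt{\cE}_x)$; since this subalgebra has the ideal intersection property inside $\Cstarred(\IsoInt{\cG}_x, \IsoInt{\cE}_x)$, the characterisation \cref{it:equiv:faithful-homomorphism} of \cref{prop:equivalent-reformulations} upgrades this to injectivity, and hence to isometry, of $\pi_x$ on all of $\Cstarred(\IsoInt{\cG}_x, \IsoInt{\cE}_x)$.

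The second step is the bundle decomposition. Because $\conto(\unitspace{\cG})$ is central in $\Cstarred(\IsoInt{\cG}, \IsoInt{\cE})$, both this algebra and the completion $\cB := \ol{\pi(\Cstarred(\IsoInt{\cG}, \IsoInt{\cE}))}$ are $\conto(\unitspace{\cG})$-algebras, i.e.\ upper semicontinuous $\Cstar$-bundles over $\unitspace{\cG}$, with fibres $\Cstarred(\IsoInt{\cG}_x, \IsoInt{\cE}_x)$ and $\cB_x$ respectively, and with $\pi$ acting fibrewise through the maps $\pi_x$. Consequently, for $f \in \cC(\IsoInt{\cG}, \IsoInt{\cE})$ one has $\|\pi(f)\| = \sup_x \|\pi_x(\res_x f)\|$ and $\|f\| = \sup_x \|\res_x f\|$, the fibre norms being the reduced norms. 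By the first step $\|\pi_x(\res_x f)\| = \|\res_x f\|$ for every $x \in D$, so injectivity of $\pi$ would follow once I show $\sup_{x \in D}\|\res_x f\| = \sup_x \|\res_x f\|$ for every $f \in \cC(\IsoInt{\cG}, \IsoInt{\cE})$: then $\|\pi(f)\| \geq \sup_{x \in D}\|\res_x f\| = \|f\|$ on the dense subalgebra $\cC(\IsoInt{\cG}, \IsoInt{\cE})$, forcing $\pi$ to be isometric.

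The remaining and principal point is that $x \mapsto \|\res_x f\|_{\Cstarred(\IsoInt{\cG}_x, \IsoInt{\cE}_x)}$ is continuous for $f \in \cC(\IsoInt{\cG}, \IsoInt{\cE})$, so that the supremum over the dense set $D$ equals the global supremum. Upper semicontinuity is automatic from the $\conto(\unitspace{\cG})$-algebra structure, so the work is to prove lower semicontinuity of the reduced fibre norm. I would do this by writing $\|\res_x f\| = \sup \{|\langle \lambda_x(\res_x f)\xi, \eta\rangle|\}$ over unit vectors $\xi, \eta \in \ltwo(\IsoInt{\cG}_x)$, approximating the norm at a fixed $x_0$ by finitely supported vectors, and then transporting these vectors to nearby fibres using that each element of $\IsoInt{\cG}_{x_0}$ lies on an open bisection of the {\'e}tale groupoid $\IsoInt{\cG}$ and hence admits a continuous local section. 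The resulting matrix coefficient $x \mapsto \langle \lambda_x(\res_x f)\xi_x, \eta_x\rangle$ is a finite sum of continuously varying terms --- this is exactly where the twist enters, through the continuous local trivialisations of $\cE$ --- and letting $x \to x_0$ yields $\liminf_{x \to x_0}\|\res_x f\| \geq \|\res_{x_0} f\|$. The delicate part, and the main obstacle, is this lower semicontinuity of the reduced norm in the twisted setting; once it is established, the density of $D$ closes the argument.
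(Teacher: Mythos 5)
Your proposal follows the paper's proof essentially step for step: reduction to the isotropy bundle via \cref{thm:essential-monomorphism}, upgrading the hypothesis at $x \in D$ to isometry of $\pi_x$ via the fibrewise $\lone$-ideal intersection property, the decomposition of both algebras as $\conto(\unitspace{\cG})$-algebras (the paper uses \cref{lem:L1-isometric-isomorphism} and Nilsen's theorem to identify the fibres), and the conclusion that the supremum of the fibre norms over the dense set $D$ already computes the reduced norm. The only difference is in the last step, where the paper simply cites the continuity of the regular representations from Kumjian's construction for the lower semicontinuity of $x \mapsto \|\res_x f\|_{\mathrm{red}}$, while you sketch the standard matrix-coefficient argument via local bisections by hand; both are correct.
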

\begin{proof}
  Let $\pi \colon \Cstarred(\cG, \cE) \to \cA$ and $D \subseteq \unitspace{\cG}$ be as in the statement of the theorem.  Without loss of generality, we may assume that $\pi$ is non-degenerate.  By \cref{thm:essential-monomorphism}, it suffices to show that $\pi|_{\Cstarred(\IsoInt{\cG}, \IsoInt{\cE})}$ is injective.  Since $\pi_x|_{\lone(\IsoInt{\cG}_x, \IsoInt{\cE}_x)}$ is injective for all $x \in D$, it is in particular non-zero, so that density of $D \subseteq \unitspace{\cG}$ implies that $\pi|_{\conto(\unitspace{\cG})}$ is injective.  Hence $\cB = \pi(\Cstarred(\IsoInt{\cG}, \IsoInt{\cE}))$ is a $\conto(\unitspace{\cG})$-algebra.  Denote by $B = (\cB_x)_x$ the upper semi-continuous \Cstar-bundle associated with it by \cite[Theorem 2.3]{nilsen1996}, which recovers $\cB$ as the algebra of sections $\cB \cong \Gamma_0(B)$.

  By \cref{lem:L1-isometric-isomorphism}, we obtain the following commutative diagram upon taking quotients by the ideal generated by $\conto(\unitspace{\cG} \setminus \{x\})$ in each algebra of its top row.
    \begin{center}
      \begin{tikzcd}
        \Lone(\IsoInt{\cG}, \IsoInt{\cE}) \ar[r] \ar[d] & \Cstarred(\IsoInt{\cG}, \IsoInt{\cE}) \ar[r, twoheadrightarrow] \ar[d] & \cB \ar[d] \\
        \lone(\IsoInt{\cG}_x, \IsoInt{\cE}_x) \ar[r] & \Cstarred(\IsoInt{\cG}_x, \IsoInt{\cE}_x) \ar[r, twoheadrightarrow, "\pi_x"] & \cB_x
      \end{tikzcd}
    \end{center}
    For $x \in D$, the $*$-homomorphism $\lone(\IsoInt{\cG}_x, \IsoInt{\cE}_x) \to \cB_x$ is injective and $(\IsoInt{\cG}_x, \IsoInt{\cE}_x)$ has the \lone-ideal intersection property.  So $\pi_x$ is an isomorphism of \Cstar-algebras and as such an isometry.  Let now $f \in \Gamma_0(B)$ be an element in the image of $\Lone(\IsoInt{\cG}, \IsoInt{\cE})$.  Then
    \begin{gather*}
      \|f\|_\cB
      =
      \sup_{x \in \unitspace{\cG}} \|f(x)\|_{\cB_x}
      \geq 
      \sup_{x \in D} \|f(x)\|_{\cB_x}
      =
      \sup_{x \in D} \|f(x)\|_{\Cstarred(\IsoInt{\cG}_x, \IsoInt{\cE}_x)}
      =
      \|f\|_{\Cstarred(\IsoInt{\cG}, \IsoInt{\cE})}
    \end{gather*}
    since the regular representations of $(\IsoInt{\cG}, \IsoInt{\cE})$ are continuous by construction \cite[Section 2]{kumjian86}.
\end{proof}

\begin{corollary}
  \label{cor:groupoid-lone-ideal-intersection-property}
  Let $\cE$ be a twist over a second-countable locally compact {\'e}tale Hausdorff groupoid $\cG$.  Assume that there is a dense subset $D \subseteq \unitspace{\cG}$ such that $(\IsoInt{\cG}_x, \IsoInt{\cE}_x)$ has the \lone-ideal intersection property for all $x \in D$.  Then $(\cG, \cE)$ has the \Lone-ideal intersection property.
\end{corollary}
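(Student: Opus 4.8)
The plan is to reduce the statement to \cref{theorem:fiber-groups-sufficient} applied to the quotient maps associated with ideals. First I would note the groupoid analogue of \cref{prop:equivalent-reformulations}: directly from \cref{def:lone-ideal-intersection-property-groupoids}, the pair $(\cG, \cE)$ has the \Lone-ideal intersection property if and only if every $*$-homomorphism $\pi \colon \Cstarred(\cG, \cE) \to \cA$ into a \Cstar-algebra that is injective on $\Lone(\cG, \cE)$ is already injective; the proof is the same passage between ideals and quotient maps as for crossed products. By \cref{prop:isotropy-subgroupoid-sufficient} I may furthermore replace $(\cG, \cE)$ by its isotropy bundle $(\IsoInt{\cG}, \IsoInt{\cE})$, which has the advantage that $\IsoInt{\cG}$ is a group bundle: every unit of $\IsoInt{\cG}$ is strongly fixed, so \cref{lem:L1-isometric-isomorphism} is available at every point and the fibre of $\Cstarred(\IsoInt{\cG}, \IsoInt{\cE})$ over $x$ is $\Cstarred(\IsoInt{\cG}_x, \IsoInt{\cE}_x)$.

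So let $\pi \colon \Cstarred(\IsoInt{\cG}, \IsoInt{\cE}) \to \cA$ be injective on $\Lone(\IsoInt{\cG}, \IsoInt{\cE})$; I want to feed it into \cref{theorem:fiber-groups-sufficient}. The fibre hypothesis of that theorem is exactly the assumption of the corollary, so the only point to establish is that $\pi_x$ restricts to an injection of $\lone(\IsoInt{\cG}_x, \IsoInt{\cE}_x)$ for $x$ ranging over a dense subset of $D$ — this suffices, because the theorem may be applied with $D$ replaced by any dense subset on which both of its hypotheses hold. To produce such $x$ I would compare, for $x \in D$, the fibre kernel $\ker \pi_x \cap \lone(\IsoInt{\cG}_x, \IsoInt{\cE}_x)$ with $\res_x\bigl(\ker \pi \cap \Lone(\IsoInt{\cG}, \IsoInt{\cE})\bigr)$. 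Writing $N = \ker \pi$ and using \cref{lem:L1-isometric-isomorphism} to identify $\lone(\IsoInt{\cG}_x, \IsoInt{\cE}_x)$ with $\Lone(\IsoInt{\cG}, \IsoInt{\cE})/I_x$, together with $\conto(\unitspace{\cG})$-linearity of $\pi$, one sees that the identity $\ker\pi_x \cap \lone(\IsoInt{\cG}_x, \IsoInt{\cE}_x) = \res_x(N \cap \Lone(\IsoInt{\cG}, \IsoInt{\cE}))$ would give the claim at once: since $\pi$ is injective on $\Lone$ the right-hand side is zero, whence $\ker\pi_x$ meets $\lone(\IsoInt{\cG}_x, \IsoInt{\cE}_x)$ trivially, and the fibre \lone-ideal intersection property at $x \in D$ promotes this to injectivity of $\pi_x$.

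The hard part will be this commutation of intersecting with $\Lone$ and restricting to the fibre, i.e.\ lifting an element of $\ker\pi_x \cap \lone(\IsoInt{\cG}_x, \IsoInt{\cE}_x)$ to a genuine element of $N \cap \Lone(\IsoInt{\cG}, \IsoInt{\cE})$; concretely it amounts to the inclusion $(N + M_x) \cap \Lone(\IsoInt{\cG}, \IsoInt{\cE}) \subseteq (N \cap \Lone(\IsoInt{\cG}, \IsoInt{\cE})) + I_x$, where $M_x$ is the closed ideal of $\Cstarred(\IsoInt{\cG}, \IsoInt{\cE})$ generated by $\conto(\unitspace{\cG} \setminus \{x\})$, so that $M_x \cap \Lone(\IsoInt{\cG}, \IsoInt{\cE}) = I_x$. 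A fibre kernel element is automatically small near $x$ by upper semicontinuity of the bundle associated with $\cB = \pi(\Cstarred(\IsoInt{\cG}, \IsoInt{\cE}))$, but forcing it to vanish exactly is a selection problem into the kernel bundle. I would handle it as in \cite{austadortega2022}, exploiting second-countability of $\cG$, hence separability of $\Cstarred(\IsoInt{\cG}, \IsoInt{\cE})$, and the continuity of the regular representations from \cite{kumjian86}: comparing the upper semicontinuous functions $x \mapsto \|\pi_x(\res_x a)\|$ with the lower semicontinuous reduced fibre norms shows that the set of $x$ at which $\pi_x$ fails to be injective is open, so that if it were non-empty it would meet the dense set $D$, and over this open set one constructs a non-zero $\Lone$-section annihilated by $\pi$, contradicting injectivity on $\Lone$. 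Once fibrewise injectivity is secured on a dense subset of $D$, \cref{theorem:fiber-groups-sufficient} yields that $\pi$ is injective, and \cref{prop:isotropy-subgroupoid-sufficient} transports the conclusion back to $(\cG, \cE)$.
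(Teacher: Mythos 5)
Your overall route is the paper's: reformulate the \Lone-ideal intersection property in terms of injective $*$-homomorphisms, pass to the isotropy bundle (where every unit is strongly fixed, so \cref{lem:L1-isometric-isomorphism} applies everywhere), and feed $\pi$ into \cref{theorem:fiber-groups-sufficient}, whose only remaining hypothesis is that $\pi_x$ restrict to an injection of $\lone(\IsoInt{\cG}_x, \IsoInt{\cE}_x)$ over a dense set of units. Up to that point your reductions match the paper, and your observation that the theorem may be applied to any dense subset of $D$ on which both hypotheses hold is correct.

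The divergence, and the gap, is in how you verify the fibrewise injectivity. The paper does this directly and for \emph{every} $x$: by \cref{lem:L1-isometric-isomorphism}, dividing the injection $\Lone(\IsoInt{\cG}, \IsoInt{\cE}) \hra \cB$ by the ideals generated by $\conto(\unitspace{\cG} \setminus \{x\})$ on the two sides yields exactly the map $\lone(\IsoInt{\cG}_x, \IsoInt{\cE}_x) \to \cB_x$, which is the desired inclusion; no density, semicontinuity or contradiction argument enters. You correctly isolate the statement that is needed here, namely $(N + M_x) \cap \Lone(\IsoInt{\cG}, \IsoInt{\cE}) \subseteq I_x$ when $N \cap \Lone(\IsoInt{\cG}, \IsoInt{\cE}) = 0$, but the substitute argument you sketch for it does not go through. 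First, the openness of the set of $x$ at which $\pi_x$ fails to be injective on $\lone(\IsoInt{\cG}_x, \IsoInt{\cE}_x)$ does not follow from comparing $x \mapsto \|\pi_x(\res_x a)\|$ with the lower semicontinuous reduced fibre norms: that comparison is made for a fixed global element $a$, whereas failure of injectivity quantifies over all elements of the fibre, most of which are not restrictions of a single global section, so you lose control of which witness works where. Second, and more seriously, the step ``over this open set one constructs a non-zero $\Lone$-section annihilated by $\pi$'' is precisely the selection problem you acknowledge and do not solve: upper semicontinuity of $y \mapsto \|\pi_y(f(y))\|$ only produces sections whose image under $\pi$ is \emph{small} near $x$, not zero, and manufacturing a non-zero element of $\ker \pi \cap \Lone(\IsoInt{\cG}, \IsoInt{\cE})$ from fibrewise kernel data is exactly the kind of conclusion the ideal intersection property is about, so as written this step is circular. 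To repair the proof, drop the openness/contradiction detour and instead establish the displayed inclusion directly from \cref{lem:L1-isometric-isomorphism}, as the paper does.
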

\begin{proof}
  Let $\pi\colon \Cstarred(\cG, \cE) \to \cA$ be a $*$-homomorphism that is injective on $\Lone(\cG, \cE)$ and write $\cB = \pi(\Cstarred(\IsoInt{\cG}, \IsoInt{\cE}))$.  In order to prove injectivity of $\pi$, by~\cref{theorem:fiber-groups-sufficient}, it suffices to check that $\pi_x\colon \Cstarred(\IsoInt{\cG}_x, \IsoInt{\cE}_x) \to \cB_x$ is injective when restricted to $\lone(\IsoInt{\cG}_x, \IsoInt{\cE}_x)$ for all $x \in \unitspace{\cG}$.  By \cref{lem:L1-isometric-isomorphism}, taking the quotient by the ideal generated by $\conto(\unitspace{\cG} \setminus \{x\})$ in the inclusion $\Lone(\cG, \cE) \hra \cB$, we indeed obtain the desired inclusion $\lone(\IsoInt{\cG}_x, \IsoInt{\cE}_x) \hra \cB_x$, which finishes the proof.
\end{proof}

\section{Groupoid C*-algebras from abelian normal subgroups}
\label{sec:groupoid-presentation}

In this section we describe a twisted groupoid associated with an inclusion of a normal abelian subgroup into a discrete group endowed with an $\rS^1$-valued 2-cocycle.  This construction should be folklore, but has not been presented explicitly to our knowledge.

\begin{definition}
  \label{def:admissible-cocycle}
  Let $A \unlhd \Gamma$ be a normal abelian subgroup of a discrete group.  A cocycle $\sigma \in \rZ^2(\Gamma, \rS^1)$ is $A$-admissible if it satisfies
  \begin{itemize}
  \item $\sigma|_{A \times A} \equiv 1$, and
  \item $\sigma(\gamma, a)\sigma(\gamma a , \gamma^{-1}) = 1 = \sigma(a , \gamma^{-1})\sigma(\gamma , a \gamma^{-1})$ for all $\gamma \in \Gamma$ and $a \in A$.
  \end{itemize}
\end{definition}
Let $A \unlhd \Gamma$ and $\sigma$ be as above.  Write $\Lambda = \Gamma/A$ and consider the action $\Lambda \grpaction{\alpha} A$ given by $\alpha_\lambda(a) = \gamma a \gamma^{-1}$ for $\gamma A = \lambda$.  Since $A$ is abelian, this is well-defined.  Denote by $\cG = \Lambda \ltimes \hat A$ the transformation groupoid associated with the dual action of $\alpha$.  Further, let $\Gamma \ltimes_\sigma (\rS^1 \times \hat A)$ be the twisted transformation groupoid whose product is given by
\begin{gather*}
  ( \gamma_1, \mu_1, \gamma_2\chi) ( \gamma_2, \mu_2, \chi)
  =
  (\gamma_1 \gamma_2, \mu_1 \mu_2 \sigma(\gamma_1, \gamma_2), \chi)
\end{gather*}
for $\gamma_1, \gamma_2 \in \Gamma$, $\mu_1, \mu_2 \in \rS^1$ and $\chi \in \hat A$ and consider
\begin{gather*}
  \cN = \{ (a^{-1}, \chi(a), \chi) \mid  a \in A, \chi \in \hat A \} \subseteq \Gamma \ltimes_\sigma (\rS^1 \times \hat A)
  \eqstop
\end{gather*}
The following lemma describes a twisted groupoid associated to the tuple $(\Gamma, A, \sigma)$.
\begin{lemma}
  \label{lem:normal-subgroupoid}
  The set $\cN \subseteq \Gamma \ltimes_\sigma (\rS^1 \times \hat A)$ is a closed normal subgroupoid.  Further, $\Gamma \ltimes_\sigma (\rS^1 \times \hat A) / \cN$ is a twist over $\cG$.
\end{lemma}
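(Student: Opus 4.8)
The plan is to verify the groupoid axioms for $\cN$ directly from the given product, establish that it is closed and normal, and then identify the quotient explicitly with a twist over $\cG = \Lambda \ltimes \hat A$.

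First I would check that $\cN$ is a subgroupoid. The unit space of $\Gamma \ltimes_\sigma (\rS^1 \times \hat A)$ is $\{(e, 1, \chi) \mid \chi \in \hat A\}$, identified with $\hat A$; taking $a = e$ shows every such unit lies in $\cN$. For composability, two elements $(a_1^{-1}, \chi_1(a_1), \chi_1)$ and $(a_2^{-1}, \chi_2(a_2), \chi_2)$ compose precisely when $a_1^{-1}\chi_1 = \chi_2$, i.e.\ $\chi_2 = \chi_1 \circ \alpha_{a_1}^{-1}$, and since $A$ is abelian the action of $a_1 \in A$ on $\hat A$ is trivial, so $\chi_2 = \chi_1$. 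Using $\sigma|_{A \times A} \equiv 1$ I would compute
\begin{gather*}
  (a_1^{-1}, \chi_1(a_1), \chi_1)(a_2^{-1}, \chi_1(a_2), \chi_1)
  =
  ((a_1 a_2)^{-1}, \chi_1(a_1 a_2) \sigma(a_1^{-1}, a_2^{-1}), \chi_1)
  \eqcomma
\end{gather*}
and since $\sigma(a_1^{-1}, a_2^{-1}) = 1$ and $\chi_1$ is a character this equals $((a_1a_2)^{-1}, \chi_1(a_1a_2), \chi_1) \in \cN$. A similar check with the inverse, using the admissibility identities to control the cocycle, shows $\cN$ is closed under inverses. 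Closedness as a subset follows since $\cN$ is the image of the continuous injective map $A \times \hat A \to \Gamma \ltimes_\sigma (\rS^1 \times \hat A)$ with $A$ discrete, so each fibre over $\chi$ is a discrete closed copy of $A$.

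Next I would prove normality: for a general element $(\gamma, \mu, \gamma^{-1}\chi)$ and $n = (a^{-1}, \chi(a), \chi) \in \cN$ composable with it, I would compute the conjugate $(\gamma, \mu, \gamma^{-1}\chi)\, n\, (\gamma, \mu, \gamma^{-1}\chi)^{-1}$ and show it again has the form $((\gamma a \gamma^{-1})^{-1}, (\gamma^{-1}\chi)(\gamma a \gamma^{-1}), \gamma^{-1}\chi)$, landing in $\cN$. This is exactly where the second admissibility condition $\sigma(\gamma, a)\sigma(\gamma a, \gamma^{-1}) = 1$ (and its partner) is needed: the cocycle contributions from the two multiplications must cancel so that the resulting $\rS^1$-coordinate is precisely $(\gamma^{-1}\chi)$ evaluated at $\gamma a \gamma^{-1}$. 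I expect this conjugation computation to be the main obstacle, since one must carefully track the inverse in the twisted groupoid (the inverse of $(\gamma, \mu, \chi)$ is $(\gamma^{-1}, \ol{\mu}\,\ol{\sigma(\gamma^{-1}, \gamma)}, \gamma\chi)$ or similar) and invoke both admissibility identities in the right order.

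Finally, to see that $(\Gamma \ltimes_\sigma (\rS^1 \times \hat A))/\cN$ is a twist over $\cG$, I would first observe that the quotient map on objects identifies $\hat A$ with the unit space of $\cG$, and that cosets of $\cN$ are indexed by $(\lambda, \chi) = (\gamma A, \chi)$ together with a residual $\rS^1$-coordinate, giving the required central extension structure $\unitspace{\cG} \times \rS^1 \hookrightarrow (\Gamma \ltimes_\sigma (\rS^1 \times \hat A))/\cN \twoheadrightarrow \cG$. I would verify the four conditions of \cref{def:groupoid-twist-definition}: injectivity of $i$ and the identity $q^{-1}(\unitspace{\cG}) = i(\unitspace{\cG} \times \rS^1)$ are immediate from the definition of $\cN$ (the fibre over a unit of $\cG$ is exactly the residual circle); centrality of $i(\unitspace{\cG} \times \rS^1)$ follows because the $\rS^1$-coordinate multiplies centrally in $\Gamma \ltimes_\sigma (\rS^1 \times \hat A)$; and local triviality follows from discreteness of $\Lambda$, which makes $\cG$ {\'e}tale with each bisection a translate of an open subset of $\hat A$, over which a continuous section is obtained by choosing a set-theoretic lift $\Lambda \to \Gamma$ and the constant $\rS^1$-value $1$. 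This establishes the claim.
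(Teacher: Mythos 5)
Your proposal follows essentially the same route as the paper's proof: verify closure under products via $\sigma|_{A\times A}\equiv 1$, closure under inverses, use the second admissibility identity to make the cocycle contributions cancel in the conjugation computation for normality, and then check the four twist axioms for the quotient with local triviality coming from the bisections $\{\gamma A\}\times \hat A$. One caveat: closedness of $\cN$ does not follow from its being the image of a continuous injection, nor from each fibre over $\chi$ being closed; the correct one-line reason (which is also the paper's) is that over each clopen slice $\{a^{-1}\}\times\rS^1\times\hat A$ the set $\cN$ is the graph of the continuous evaluation map $\chi\mapsto\chi(a)$ into a Hausdorff space, hence closed, and these slices are disjoint and clopen. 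Also watch the source/range bookkeeping in the normality step: with the paper's conventions an element $n\in\cN$ composable on the right with $(\gamma,\mu,\chi)$ must sit over the character $\gamma\chi$, not over $\chi$ as written, though this is exactly the kind of index-tracking you already flag as requiring care.
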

\begin{proof}
  It follows from the fact that evaluation of characters in $\hat A$ is continuous, that $\cN$ is closed. Further, it is multiplicatively closed since $\sigma|_{A \times A} \equiv 1$ and the calculation
  \begin{gather*}
    (a^{-1}, \chi(a), \chi)^{-1}
    =
    (a, \ol{\chi(a)} \sigma(a^{-1}, a), \chi)
    =
    (a, \chi(a^{-1}), \chi)
  \end{gather*}
  for $a \in A$ and $\chi \in \hat A$ shows that $\cN$ is also closed under inverses. So it is a closed subgroupoid of $\Gamma \ltimes_\sigma (\rS^1 \times \hat A)$.  We next check normality of $\cN$.  Thanks to centrality of $\rS^1$ it suffices to observe for $\gamma \in \Gamma$, $a \in A$ and $\chi \in \hat A$ that
  \begin{align*}
    (\gamma, 1, \chi)(a^{-1}, \chi(a), \chi)(\gamma^{-1}, 1, \gamma\chi)
    & =
    (\gamma a^{-1}\gamma^{-1}, \chi(a) \sigma(\gamma, a^{-1})\sigma(\gamma a^{-1}, \gamma^{-1}), \gamma \chi) \\
    & =
    (\gamma a^{-1} \gamma^{-1}, \gamma \chi(\gamma a \gamma^{-1})), \gamma \chi)
    \eqstop
  \end{align*}

  We now want to show that the quotient $\cE = \Gamma \ltimes_\sigma (\rS^1 \times \hat A) / \cN$ is a twist over $\cG$. The inclusion $\{e\} \times \rS^1 \times \hat A \subseteq \Gamma \ltimes_\sigma (\rS^1 \times \hat A)$ descends to an inclusion $i\colon \rS^1 \times \hat A \lra \cE$ since $\cN \cap (\{e\} \times \rS^1 \times \hat A) = \{(e,1)\} \times \hat A$.  Further, the projection onto the first and the last component $\Gamma \ltimes_\sigma (\rS^1 \times \hat A) \lra \Gamma \times \hat A$ induces a continuous quotient map $q\colon \cE \lra \Gamma/A \ltimes \hat A = \cG$.  It is clear that $i(\rS^1 \times \hat A)$ is central in $\cE$ and that $q^{-1}(\{eA\} \times \hat A) = i(\rS^1 \times \hat A)$.  What remains to be shown is that $\cE$ is locally trivial.  Let $(\gamma A, \chi_0) \in \cG$ and consider the open bisection $U = \{\gamma A\} \times \hat A$.  The map $S \colon U \lra \cE \colon (\gamma A, \chi) \mapsto [\gamma, 1, \chi]$ is continuous and satisfies $q \circ S = \id_U$.  Further,
  \begin{align*}
    q^{-1}(U) & = \{ [\gamma a, \mu, \chi] \in \cE \mid a \in A, \mu \in \rS^1, \chi \in \hat A\} \\
              & = \{ [\gamma, \mu, \chi] \in \cE \mid \mu \in \rS^1, \chi \in \hat A\}
  \end{align*}
is naturally isomorphic with $\rS^1 \times U$.
\end{proof}
Let us introduce some notation in order to refer to the twisted groupoid just constructed.
\begin{definition}
  \label{def:groupoid-decomposition}
  Given a group $\Gamma$ with a normal abelian subgroup $A$ and an $A$-admissible 2-cocycle $\sigma \in \rZ^2(\Gamma, \rS^1)$, we denote the associated twisted groupoid by
  \begin{align*}
    \cG(\Gamma, A, \sigma ) & = \Gamma/A \ltimes \hat A \\
    \cE(\Gamma, A, \sigma) & = \Gamma \ltimes_\sigma (\rS^1 \times \hat A) / \{ (a^{-1}, \chi(a), \chi) \mid a \in A, \chi \in \hat A\}
                     \eqstop
  \end{align*}
\end{definition}
We next identify the twisted group algebras associated to $(\Gamma, \sigma)$ with the twisted groupoid algebra associated with a normal abelian subgroup $A \unlhd \Gamma$ for which $\sigma$ is admissible.  This result generalises the identification described in \cref{rem:conventions}.
\begin{proposition}
  \label{prop:identification-algebras-groupoid-decomposition}
  Let $A \unlhd \Gamma$ be an abelian normal subgroup of a discrete group and $\sigma \in \rZ^2(\Gamma, \rS^1)$ an $A$-admissible cocycle.  Let $(\cG, \cE) = (\cG(\Gamma, A, \sigma), \cE(\Gamma, A, \sigma))$ be the associated twisted groupoid and write elements of $\CC \times_{\rS^1} \cE$ as equivalence classes $[z, \gamma, \mu, \chi]$ with $z \in \CC$, $\gamma \in \Gamma$, $\mu \in \rS^1$ and $\chi \in \hat A$.  Given $\gamma \in \Gamma$ define the following section of $\CC \times_{\rS^1} \cE \thra \cG$:
  \begin{gather*}
    f_\gamma(gA, \chi) = 
    \begin{cases}
      [1, \gamma, 1, \chi] & \text{if } gA = \gamma A \\
      0 & \text{otherwise.}
    \end{cases}
  \end{gather*}
  Then the map $\gamma \mapsto f_\gamma$
  \begin{enumerate}
  \item extends to a contractive embedding $\lone(\Gamma, \sigma) \hra \Lone(\cG, \cE)$, which
  \item extends to an isomorphism $\Cstarred(\Gamma, \sigma) \hra \Cstarred(\cG, \cE)$.
  \end{enumerate}
\end{proposition}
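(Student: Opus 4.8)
The proposition asks me to establish an isomorphism between a twisted group C*-algebra and a twisted groupoid C*-algebra. The map $\gamma \mapsto f_\gamma$ is explicit, and I need to show it extends first to an $\ell^1$-contraction and then to a reduced C*-isomorphism. Let me think about the structure.

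The group $\Gamma$ with cocycle $\sigma$ gives $\mathbb{C}[\Gamma, \sigma]$. The groupoid $\cG = \Gamma/A \ltimes \hat A$ with twist $\cE$. The key is that the sections $f_\gamma$ should satisfy $\sigma$-twisted multiplicativity, just like in Remark (the elaboration computation).

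**Checking multiplicativity.** First I'd verify $f_{\gamma_1} * f_{\gamma_2} = \sigma(\gamma_1, \gamma_2) f_{\gamma_1\gamma_2}$, paralleling the Remark computation. This involves the convolution in $\Gamma(\mathbb{C} \times_{S^1} \cE)$. Since $\cG = \Lambda \ltimes \hat A$ is a transformation groupoid, composability of $(g_1 A, \chi_1)$ and $(g_2 A, \chi_2)$ requires $\chi_1 = g_2 \chi_2$. When I convolve $f_{\gamma_1}$ and $f_{\gamma_2}$ at a point $(gA, \chi)$, the only nonzero term forces $g_1 A = \gamma_1 A$, $g_2 A = \gamma_2 A$, and the product in $\cE$ picks up the cocycle $\sigma(\gamma_1, \gamma_2)$ via the groupoid product formula. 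The admissibility conditions on $\sigma$ should ensure everything is consistent with quotienting by $\cN$. This is the computational heart but should be routine given the Remark.

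**The key subtlety — why this is injective and surjective.** There's a genuine point here: the groupoid $\cG$ has unit space $\hat A$, which is much bigger than a point. So $\Gamma(\mathbb{C} \times_{S^1} \cE)$ contains functions depending on $\chi \in \hat A$, not just the span of the $f_\gamma$. The claim that $\gamma \mapsto f_\gamma$ spans everything relevant must use Fourier/Gelfand duality: $\hat A \cong \text{Spec}(C^*(A))$, and functions on $\hat A$ correspond to $A$. Concretely, $\cC(\cG, \cE)$ is spanned by products $f \cdot f_\gamma$ where $f \in C(\hat A) = C^*(A)$, and the elements $f_a$ for $a \in A$ together with $C(\hat A)$ overlap — the point $(eA, \chi)$ component recovers $\hat A$, and by Fourier transform $C(\hat A) \cong C^*(A) \subseteq C^*_r(\Gamma, \sigma)$. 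So the image of $\gamma \mapsto f_\gamma$ is genuinely dense: the $f_a$ for $a \in A$ generate $\conto(\hat A)$ after taking closed span, because the Fourier transform sends $\delta_a \mapsto (\chi \mapsto \chi(a))$ and these span $C(\hat A)$ by Stone–Weierstrass.

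**The plan.** Here is how I would organize it. (1) Verify the multiplicativity and $*$-compatibility of $\gamma \mapsto f_\gamma$ by direct computation, extending the Remark. (2) For the $\ell^1$-contraction: a general element $\sum_\gamma c_\gamma \lambda_\sigma(\gamma)$ maps to $\sum_\gamma c_\gamma f_\gamma$; estimate its $I$-norm. Since each $f_\gamma$ is supported on the bisection $\{\gamma A\} \times \hat A$ and $\|f_\gamma\|_I = 1$ (it's a unimodular section over a bisection), the triangle inequality gives $\|\sum c_\gamma f_\gamma\|_I \le \sum |c_\gamma| = \|\sum c_\gamma \lambda_\sigma(\gamma)\|_{\ell^1}$. (3) For the C*-isomorphism: show the map intertwines the regular representations. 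The regular representation of $(\cG, \cE)$ fibers over $\hat A$; at a character $\chi$, the fiber representation of $C^*_r(\cG, \cE)$ restricted to the image recovers the representation of $C^*(\Gamma, \sigma)$ on $\ell^2$ of the $\Lambda$-orbit of $\chi$ induced from $\chi|_A$. Integrating over $\hat A$ with Plancherel recovers the full regular representation of $C^*_r(\Gamma, \sigma)$, giving isometry.

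**Main obstacle.** The hard part will be step (3): matching the reduced norms. I expect the cleanest route is to identify the regular representation of $\Cstarred(\cG, \cE)$, which acts on $L^2$ of the groupoid fibered over $\hat A$, with the twisted regular representation of $\Gamma$. The fiber over $\chi \in \hat A$ is $\ell^2$ of the $\cG$-orbit, i.e. $\ell^2(\Gamma/A \cdot \chi) \cong \ell^2(\Gamma/\Stab(\chi))$, and the representation is the twisted quasi-regular representation induced from the character $\chi$ on $A$. The twisted regular representation $\lambda_\sigma$ of $\Gamma$ decomposes (via the central subgroup $A$ and Fourier analysis on $\hat A$) as a direct integral over $\hat A$ of exactly these induced representations — this is the twisted analogue of the imprimitivity/Mackey decomposition $\ltwo(\Gamma) \cong \int_{\hat A}^\oplus \ell^2(\Gamma/A) \, d\chi$. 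Verifying that these direct-integral decompositions agree isometrically, and that the image under $\gamma \mapsto f_\gamma$ corresponds correctly fiberwise, is where the real work lies; the admissibility of $\sigma$ is exactly what makes the central character $\chi$ on $A$ well-behaved under the $\Lambda$-action so that the induced representations are the right ones.
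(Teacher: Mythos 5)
Your steps (1) and (2) and your density argument coincide with the paper's: the twisted multiplicativity computation, the $I$-norm estimate via the fact that each $f_\gamma$ is a unimodular section over the bisection $\{\gamma A\}\times\hat A$, and the observation that the $f_a$ for $a\in A$ recover $\cont(\hat A)$ by Fourier transform so that every section supported on a single coset bisection lies in the closed image. Where you genuinely diverge is the isometry on the reduced level. The paper avoids any representation-theoretic decomposition: it composes the canonical faithful conditional expectation $\rE\colon \Cstarred(\cG,\cE)\to\cont(\hat A)$ with the Haar integral on $\hat A$, checks that the resulting faithful state pulls back to the canonical trace $u_\gamma\mapsto\delta_{\gamma,e}$ on $\CC[\Gamma,\sigma]$, and concludes isometry because the GNS representation of that trace is the twisted regular representation. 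Your route — decomposing $\lambda_\sigma$ as a direct integral over $\hat A$ of twisted induced representations and matching it fibrewise with the regular representation of the groupoid — is the ``unpacked'' version of the same state (the GNS space of the paper's state is exactly $\int_{\hat A}^{\oplus}\ltwo(\cG_\chi)\,\rmd\chi$), but it requires more bookkeeping: you must in addition argue that the essential supremum of the fibre norms (the direct-integral norm on the group side) agrees with the supremum over all $\chi$ (the reduced groupoid norm), which follows from lower semicontinuity of $\chi\mapsto\|\pi_\chi(a)\|$ together with full support of Haar measure, a point your sketch does not address. The paper's argument buys brevity; yours buys an explicit picture of how $\Cstarred(\Gamma,\sigma)$ sits fibrewise over $\hat A$, which is in the spirit of what the paper later does anyway.

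One concrete misstatement needs correcting: the fibre of the regular representation of $(\cG,\cE)$ at $\chi$ is $\ltwo(\cG_\chi)$ with $\cG_\chi=\{(gA,\chi)\}\cong\Gamma/A$, carrying the twisted induced representation $\Ind_A^\Gamma\chi$ — it is \emph{not} ``$\ltwo$ of the $\Lambda$-orbit of $\chi$'', i.e.\ not a quasi-regular representation on $\ltwo(\Lambda/\Stab(\chi))$; these differ drastically, e.g.\ at a fixed point $\chi$ the orbit is a singleton while $\Ind_A^\Gamma\chi$ lives on all of $\ltwo(\Gamma/A)$. You write the correct decomposition $\ltwo(\Gamma)\cong\int_{\hat A}^{\oplus}\ltwo(\Gamma/A)\,\rmd\chi$ a sentence later, so this is a repairable slip rather than a fatal one, but as stated that identification would break the argument. (Also, $A$ is normal abelian, not central in $\Gamma$; centrality plays no role in the Fourier decomposition over $\hat A$.)
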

\begin{proof}  
  We first show that the map $\gamma \to f_\gamma$ is $\sigma$-twisted multiplicative.  For $\gamma_1, \gamma_2, g \in \Gamma$ and $\chi \in \hat A$, we find that
  \begin{align*}
    f_{\gamma_1} * f_{\gamma_2} (g A, \chi)
    & =
    \sum_{(g_1A) (g_2 A) = g A} f_{\gamma_1}(g_1A, g_2 \chi) f_{\gamma_2}(g_2A, \chi) \\
    & =
      \sum_{\substack{(g_1 A) (g_2 A) = g A \\ g_1 A = \gamma_1 A, \, g_2 A = \gamma_2 A}}
    [1, g_1, 1,  g_2\chi][1, g_2, 1, \chi] \\
    & =
      \delta_{gA, \gamma_1\gamma_2A} [1, \gamma_1, 1, \gamma_2\chi][1, \gamma_2, 1, \chi] \\
    & =
      \delta_{gA, \gamma_1\gamma_2A} [1, \gamma_1\gamma_2, \sigma(\gamma_1, \gamma_2), \chi] \\
    & =
      \sigma(\gamma_1, \gamma_2) f_{\gamma_1\gamma_2}(gA, \chi)
      \eqstop
  \end{align*}
  Since $f_e$ is the neutral element for the convolution product, this shows that the map $\gamma \mapsto f_\gamma$ extends to a unital $*$-homomorphism $\CC[\Gamma, \sigma] \to \Lone(\cG, \cE)$.

  We next show that this $*$-homomorphism extends to a contraction $\lone(\Gamma, \sigma) \to \Lone(\cG, \cE)$.  To this end, we need to identify the functions $\tilde f_\gamma \in \cC(\cG, \cE)$ associated with $f_\gamma$.  We claim that
  \begin{gather*}
    \tilde f_\gamma([g, \mu, \chi]) =
    \begin{cases}
      \ol{\mu} \chi(g^{-1}\gamma) & \text{if } \gamma A = g A \\
      0 & \text{otherwise.}
    \end{cases}
  \end{gather*}
  Indeed, for $\gamma A = gA$, $\mu \in \rS^1$ and $\chi \in \hat A$ we calculate
  \begin{align*}
    [\ol{\mu}\chi(g^{-1}\gamma), g, \mu,  \chi]
    =
    [\chi(g^{-1}\gamma), \gamma \gamma^{-1} g, 1, \chi]
    =
    [\chi(g^{-1}\gamma), \gamma,  \chi(\gamma^{-1} g), \chi] 
     =
    [1, \gamma, 1, \chi]
    \eqstop
  \end{align*}
  Take now $\sum_{\gamma \in \Gamma} c_\gamma u_\gamma \in \CC[\Gamma, \sigma]$.  Then
  \begin{align*}
    \sup_{\chi \in \hat A} \|\sum_{\gamma \in \Gamma} c_\gamma \tilde f_\gamma \|_{\lone(\cG_\chi)}
    & =
      \sup_{\chi \in \hat A} \sum_{gA \in \Gamma/A} \left | \sum_{\gamma \in \Gamma} c_\gamma \tilde f_\gamma([g, 1, \chi]) \right | \\
    & =
      \sup_{\chi \in \hat A} \sum_{gA \in \Gamma/A} \left | \sum_{\gamma \in gA} c_\gamma \chi(\gamma^{-1}g) \right | \\
    & \leq
      \sum_\gamma |c_\gamma|
      \eqstop
  \end{align*}
  Similarly, we obtain that
  \begin{align*}
    \sup_{\chi \in \hat A} \|\sum_{\gamma \in \Gamma} c_\gamma \tilde f_\gamma \|_{\lone(\cG^\chi)}
    & =
      \sup_{\chi \in \hat A} \sum_{gA \in \Gamma/A} \left | \sum_{\gamma \in \Gamma} c_\gamma \tilde f_\gamma([g, 1, g^{-1}\chi]) \right | \\
    & =
      \sup_{\chi \in \hat A} \sum_{gA \in \Gamma/A} \left | \sum_{\gamma \in gA} c_\gamma \chi(g \gamma^{-1}) \right | \\
    & \leq
      \sum_\gamma |c_\gamma|
      \eqstop
  \end{align*}
  Together, these calculations show that $\|\sum_\gamma c_\gamma \tilde f_\gamma\|_{\mathrm{I}} \leq \|\sum_\gamma c_\gamma u_\gamma\|_{\lone(\Gamma)}$.  So indeed, we obtain a contraction $\lone(\Gamma, \sigma) \to \Lone(\cG, \cE)$.

  We now show that the contraction above extends to a $*$-isomorphism $\Cstarred(\Gamma, \sigma) \cong \Cstarred(\cG, \cE)$. This will imply in particular that the map $\lone(\Gamma, \sigma) \to \Lone(\cG, \cE)$ is injective.  Consider the conditional expectation $\rE \colon \Cstarred(\cG, \cE) \to \cont(\hat A)$ given by restriction of functions in $\cC(\cG, \cE)$. Further, denote by $\int \rmd \chi$ the Haar integral on $\hat A$.  We observe that for every $\gamma \in \Gamma$, we have
  \begin{align*}
    \left (\int \rmd \chi \circ \rE \right )(f_\gamma)
    & =
      \int_{\hat A} \tilde f_\gamma([e, 1, \chi]) \rmd \chi \\
    & =
      \begin{cases}
        \int_{\hat A} \chi(\gamma) \rmd \chi & \text{if } \gamma \in A  \\
        0 & \text{otherwise}
      \end{cases} \\
    & =
      \begin{cases}
        1 & \text{if } \gamma = e  \\
        0 & \text{otherwise.}
      \end{cases}
  \end{align*}
  This shows that we obtain an isometric $*$-homomorphism $\Cstarred(\Gamma, \sigma) \to \Cstarred(\cG, \cE)$ and it remains to argue that it has dense image. To this end it suffices to show that for every $gA \in \Gamma/A$ and every section $f \colon \cG \to \CC \times_{\rS^1} \cE$ supported on $\{gA\} \times \hat A$ lies in the image of $\Cstarred(\Gamma, \sigma)$.  Let $f_{\hat A} \colon \hat A \to \CC$ be the unique continuous function such that $f(gA, \chi) = [f_{\hat A}(\chi),  g, 1, \chi]$ for all $\chi \in \hat A$.  We can identify $f_{\hat A}$ with an element in $\cC(\cG, \cE)$, and find that $f = f_g * f_{\hat A}$, which finishes the proof. 
\end{proof}
Let us next describe the isotropy groups and the associated twists. 
We first recall the following definition.
\begin{definition}
	Given a group $\Gamma$ and a group action $\Gamma \grpaction{} X$, the \emph{neighbourhood stabiliser of $x$ in $\Gamma$} is the subgroup $\Gamma^\circ_x = \{\gamma \in \Gamma \mid \exists x \in U \text{ open}\colon \gamma|_U = \id_U\}$.
\end{definition}

\begin{proposition}
  \label{prop:groupoid-decomposition-stabilisers}
  Let $A \unlhd \Gamma$ be an abelian normal subgroup and $\sigma \in \rZ^2(\Gamma, \rS^1)$ an $A$-admissible cocycle.  Let $(\cG, \cE)$ be the associated twisted groupoid.  Then the fibre of $(\IsoInt{\cG}, \IsoInt{\cE})$ at $\chi \in \hat A$ is given by the twist $\Gamma^\circ_\chi \times_\sigma \rS^1/N \to \Gamma^\circ_\chi/A$ obtained from $\Gamma^\circ_\chi \times_\sigma \rS^1 \to \Gamma^\circ_\chi/A$ by dividing out the normal subgroup $N = \llangle (a, \ol{\chi(a)}) \mid a \in A \rrangle \unlhd \Gamma^\circ_\chi \times_\sigma \rS^1$.

  Furthermore, given a section $s \colon \Gamma^\circ_\chi/A \to \Gamma^\circ_\chi$ and the associated 2-cocycle $\rho \in \rZ^2(\Gamma^\circ_\chi/A, A)$, we define a section $\tilde s \colon \Gamma^\circ_\chi/A \to \Gamma^\circ_\chi \times_\sigma \rS^1/N$ by $\tilde s(h) = [s(h), 1]$.  Then the associated $\rS^1$-valued 2-cocycle is $(\chi \circ \rho) \cdot (\sigma \circ (s \times s))$.  
\end{proposition}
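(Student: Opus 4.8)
The plan is to read off both fibres directly from the explicit model of $(\cG,\cE)$ produced in \cref{lem:normal-subgroupoid}, and then to match the outcome with the quotient extension in the statement. Two elementary consequences of $A$-admissibility will drive every cancellation. First, putting $a=e$ in the second identity of \cref{def:admissible-cocycle} gives $\sigma(\gamma,\gamma^{-1})=1$ for all $\gamma\in\Gamma$, so that inversion in the central extension $\Gamma\times_\sigma\rS^1$ is simply $(\gamma,\mu)^{-1}=(\gamma^{-1},\ol\mu)$. Second, an element $\gamma\in\Gamma^\circ_\chi$ fixes $\chi$, whence $\chi(\gamma a\gamma^{-1})=\chi(a)$ for all $a\in A$; that is, $\chi$ is $\Gamma^\circ_\chi$-invariant.

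For the first assertion I would begin with the interior isotropy of the transformation groupoid $\cG=\Gamma/A\ltimes\hat A$. An arrow $(\lambda,\chi)$ lies in $\IsoInt{\cG}$ precisely when $\lambda$ fixes a neighbourhood of $\chi$ pointwise; lifting along $\Gamma\to\Gamma/A$ and using that $A$ acts trivially on $\hat A$ identifies $\IsoInt{\cG}_\chi$ with $\Gamma^\circ_\chi/A$. Next I would record the isotropy of $\cE':=\Gamma\ltimes_\sigma(\rS^1\times\hat A)$ over $\chi$: it consists of the triples $(\gamma,\mu,\chi)$ with $\gamma\chi=\chi$, and restricting to those $\gamma$ fixing a neighbourhood of $\chi$ yields the group $\Gamma^\circ_\chi\times_\sigma\rS^1$, realised inside $\cE'$ as the central extension carried by the $\mu$-coordinate. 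Since, by the description of isotropy twists recalled in \cref{sec:twisted-groupoid-algebras}, passing to $\cE=\cE'/\cN$ is compatible with taking interior isotropy, this gives $\IsoInt{\cE}_\chi=(\Gamma^\circ_\chi\times_\sigma\rS^1)/\cN_\chi$. Finally I would note that $\cN_\chi=\{(a^{-1},\chi(a)):a\in A\}$ equals, after the substitution $a\mapsto a^{-1}$, the set $\{(a,\ol{\chi(a)}):a\in A\}$, and that the two facts above show this to be the normal subgroup $N$, normality being exactly the point where $\sigma(\gamma,\gamma^{-1})=1$ and the $\Gamma^\circ_\chi$-invariance of $\chi$ enter.

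For the cocycle I would compute inside $\Gamma^\circ_\chi\times_\sigma\rS^1/N$ with the section $\tilde s(h)=[s(h),1]$. The product in the extension gives $\tilde s(h_1)\tilde s(h_2)=[s(h_1)s(h_2),\sigma(s(h_1),s(h_2))]$, and inserting $s(h_1)s(h_2)=\rho(h_1,h_2)s(h_1h_2)$ yields a representative whose $\Gamma$-coordinate differs from $s(h_1h_2)$ by $\rho(h_1,h_2)\in A$. The crucial move is to push this $A$-part into the central circle: writing $\rho:=\rho(h_1,h_2)$ and $g:=s(h_1h_2)$, the factorisation $(\rho g,\nu)=(\rho,\ol{\chi(\rho)})\,(g,\nu')$ with $(\rho,\ol{\chi(\rho)})\in N$ shows that modulo $N$ the class retains only the scalar $\chi(\rho)$ dictated by the defining relation of $N$. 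Combining this with the clean inversion $\tilde s(h_1h_2)^{-1}=[s(h_1h_2)^{-1},1]$ afforded by $\sigma(\gamma,\gamma^{-1})=1$, and reducing the residual $\sigma$-terms by the 2-cocycle identity and the admissibility relations, is designed to collect all contributions into $\chi(\rho(h_1,h_2))\cdot\sigma(s(h_1),s(h_2))$, the claimed value of the cocycle attached to $\tilde s$.

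The two isotropy identifications are routine; the delicate part, which I would carry out with most care, is the $\rS^1$-bookkeeping in the cocycle computation. Every step that transfers an $A$-factor between the $\Gamma$-coordinate and the central circle generates a $\sigma$-correction, and it is only the interplay of the two admissibility identities of \cref{def:admissible-cocycle} — via $\sigma(\gamma,\gamma^{-1})=1$ and the controlled behaviour of $\sigma$ on products $\gamma a$ with $a\in A$ — together with the $\Gamma^\circ_\chi$-invariance of $\chi$ that should force these corrections to cancel. Checking that no stray scalar survives is the main obstacle, and I would validate the final formula against the already-settled case $A=\{e\}$ in \cref{rem:conventions}, where it must reduce to $\sigma$ itself.
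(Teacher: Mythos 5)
Your proposal follows essentially the same route as the paper: both isotropy identifications are read off directly from the explicit model of \cref{lem:normal-subgroupoid}, and the extension cocycle is obtained by the same direct computation of $\tilde s(h_1)\tilde s(h_2)$, absorbing the $A$-factor $\rho(h_1,h_2)$ into the central circle via the defining relation of $N$ together with the $\Gamma^\circ_\chi$-invariance of $\chi$ and the admissibility identities. The only cosmetic difference is that the paper conjugates $\rho(h_1,h_2)$ to the right of $s(h_1h_2)$ before absorbing it, whereas you peel it off on the left; both variants rest on exactly the same cancellations.
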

\begin{proof}
  It is clear that $\IsoInt{\cG}_\chi = \Gamma^\circ_\chi/A$.  We can thus calculate the fibre
  \begin{gather*}
    \IsoInt{\cE}_\chi
    =
    \{[\gamma, \mu, \chi] \mid \gamma \in \Gamma^\circ_\chi, \mu \in \rS^1\}
    \cong
    \Gamma^\circ_\chi \times_\sigma \rS^1/N
    \eqstop
\end{gather*}
Now fix a section $s \colon \Gamma^\circ_\chi/A \to \Gamma^\circ_\chi$ and define $\tilde s(h) = [s(h), 1]$ as in the statement of the result.  For $h_1, h_2 \in \Gamma^\circ_\chi/A$, using the fact that $\chi$ is fixed by $\Gamma^\circ_\chi$, we find that
\begin{align*}
  \tilde s(h_1) \tilde s(h_2)
  & =
    [s(h_1), 1] [s(h_2), 1] \\
  & =
    [\rho(h_1,h_2) s(h_1h_2), \sigma(s(h_1), s(h_2))] \\
  & =
    [s(h_1h_2) (s(h_1h_2)^{-1}\rho(h_1,h_2)s(h_1h_2)), \sigma(s(h_1), s(h_2))] \\
  & =
    [s(h_1h_2) , (\chi \circ \rho(h_1,h_2)) \cdot (\sigma(s(h_1), s(h_2)))] \\
  & =
    (\chi \circ \rho(h_1,h_2)) \cdot (\sigma(s(h_1), s(h_2))) \tilde s(h_1h_2)
    \eqstop
\end{align*}
This shows that $(\chi \circ \rho) \cdot (\sigma \circ (s \times s))$ is indeed a 2-cocycle and that it is the extension cocycle associated with $\tilde s$.
\end{proof}

\section{Proof of the main results}
\label{sec:main-results}

In this section we prove all main results described in the introduction.  We start with three lemmas, which will be used in the proof of \cref{thm:ideal-intersection-general-form}.

\begin{lemma}
  \label{lem:update-crossed-products}
  Let $\Gamma$ be a group whose subgroups all have the $\lone$-ideal intersection property.  Then any action of $\Gamma$ on a locally compact Hausdorff space has the \lone-ideal intersection property.
\end{lemma}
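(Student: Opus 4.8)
The plan is to realise the crossed product as the \Cstar-algebra of the transformation groupoid and to feed the hypothesis into \cref{theorem:fiber-groups-sufficient} through the isotropy fibres. First I would arrange that the relevant groupoid is second-countable. Writing $\Gamma$ as the directed union of its finitely generated (hence countable) subgroups $\Lambda$, each of which again has the property that \emph{all} of its subgroups have the \lone-ideal intersection property, \cref{prop:lone-ideal-intersection-directed-union} reduces the statement to countable $\Gamma$. With $\Gamma$ countable, $\conto(X)$ is the directed union of its $\Gamma$-invariant separable \Cstar-subalgebras $\conto(Y)$, where $Y$ is a second-countable locally compact Hausdorff $\Gamma$-space; running the argument of \cref{prop:lone-ideal-intersection-directed-union} in the coefficient algebra, via \cref{prop:equivalent-reformulations}\,\cref{it:equiv:faithful-homomorphism} together with functoriality of reduced crossed products along equivariant inclusions, then reduces to second-countable $X$. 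In this situation $\cG := \Gamma \ltimes X$ is a second-countable locally compact étale Hausdorff groupoid, and with the trivial twist $\cE$ we have $\Cstarred(\cG, \cE) \cong \conto(X) \rtimes_{\mathrm{red}} \Gamma$, with $\contc(\cG)$ serving as a common dense $*$-subalgebra of the reduced and of the \lone-crossed product.

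Next I would identify the fibres. The interior-isotropy fibres of a transformation groupoid are exactly the neighbourhood stabilisers, $\IsoInt{\cG}_x = \Gamma^\circ_x$, which are subgroups of $\Gamma$; since $\cE$ is trivial, $(\IsoInt{\cG}_x, \IsoInt{\cE}_x)$ is simply the group $\Gamma^\circ_x$ and $\lone(\IsoInt{\cG}_x, \IsoInt{\cE}_x) = \lone(\Gamma^\circ_x)$. By hypothesis every subgroup of $\Gamma$ has the \lone-ideal intersection property, so the density hypothesis of \cref{theorem:fiber-groups-sufficient} holds with $D = \unitspace{\cG} = X$. To prove the \lone-ideal intersection property I would then invoke \cref{prop:equivalent-reformulations}\,\cref{it:equiv:faithful-homomorphism}: take a $*$-homomorphism $\pi \colon \conto(X) \rtimes_{\mathrm{red}} \Gamma \to B$ that is injective on $\conto(X) \rtimes_{\lone} \Gamma$ and aim to show $\pi$ is injective. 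By \cref{theorem:fiber-groups-sufficient} it then suffices to check that $\pi_x$ restricts to an injection of $\lone(\Gamma^\circ_x)$ for every $x \in X$.

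The heart of the argument is this last fibrewise check. Note that the isotropy part of $\conto(X) \rtimes_{\lone} \Gamma$ (the \lone-closure of $\iota(\cC(\IsoInt{\cG}, \IsoInt{\cE}))$, with $\iota$ as in \cref{thm:essential-monomorphism}) maps under $\pi$ into $\pi(\Cstarred(\IsoInt{\cG}, \IsoInt{\cE}))$, and $\pi$ is injective there. I would localise this inclusion at the point $x$ exactly as in the proof of \cref{cor:groupoid-lone-ideal-intersection-property}, the crucial input being an \emph{isometric} identification of the localisation of this \lone-algebra at $x$ with $\lone(\Gamma^\circ_x)$, in the spirit of \cref{lem:L1-isometric-isomorphism}: because every element of $\Gamma^\circ_x$ acts trivially on a neighbourhood of $x$, quotienting by the ideal generated by $\conto(X \setminus \{x\})$ collapses each coefficient function to its value at $x$ without loss of norm, so the quotient is faithful on the fibre. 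Feeding the resulting injection $\lone(\Gamma^\circ_x) \hookrightarrow \pi(\Cstarred(\IsoInt{\cG}, \IsoInt{\cE}))_x$ into \cref{theorem:fiber-groups-sufficient} gives injectivity of $\pi$, hence the \lone-ideal intersection property. I expect this fibrewise descent to be the main obstacle: injectivity of $\pi$ on the \emph{whole} \lone-crossed product has to be pushed down to injectivity on each fibre algebra $\lone(\Gamma^\circ_x)$, and since localisation of an inclusion of $\conto(X)$-algebras is not exact in general, it is precisely the isometry of the \lone-localisation, and not mere injectivity, that rescues the step. A secondary technical point is to carry out the second-countability reduction carefully enough that \cref{theorem:fiber-groups-sufficient} genuinely applies.
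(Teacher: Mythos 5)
Your argument is essentially the paper's proof: identify $\conto(X) \rtimes_{\mathrm{red}} \Gamma$ with $\Cstarred(\Gamma \ltimes X)$, recognise the interior-isotropy fibres as the neighbourhood stabilisers $\Gamma^\circ_x$, and verify the fibrewise hypothesis of \cref{theorem:fiber-groups-sufficient} by localising an $\lone$-level inclusion at $x$ using the isometric identification provided by \cref{lem:L1-isometric-isomorphism}. Your preliminary reduction to countable $\Gamma$ and second-countable $X$ via \cref{prop:lone-ideal-intersection-directed-union} is a sensible addition that the paper leaves implicit, since \cref{theorem:fiber-groups-sufficient} is stated for second-countable groupoids.
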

\begin{proof}
  Let $\pi \colon \conto(X) \redtimes \Gamma \to \cA$ be a *-homomorphism such that $\pi|_{\conto(X) \rtimes_{\lone} \Gamma}$ is injective.  Consider the transformation groupoid $\cG = \Gamma \ltimes X$ and the identification $\conto(X) \redtimes \Gamma \cong \Cstarred(\cG)$.  In order to deduce injectivity of $\pi$, by \cref{theorem:fiber-groups-sufficient} it suffices to show that for all $x \in X$ the restriction of $\pi_x \colon \Cstarred(\IsoInt{\cG})_x \to \pi(\Cstarred(\IsoInt{\cG}))_x$ to $\Lone(\IsoInt{\cG})_x$ is injective.  Denote by $\Gamma^\circ_x = \{g \in \Gamma \mid \exists x \in U \text{ open} \colon g|_U = \id_U\}$ the neighbourhood stabiliser of $x$ and let $\cB = \pi(\conto(X) \redtimes \Gamma^\circ_x)$.  Using \cref{lem:L1-isometric-isomorphism}, we obtain the following commutative diagram, where the top row is divided by the respective ideals generated by $\conto(X \setminus \{x\})$.
  \begin{center}
  \begin{tikzcd}
    & \conto(X) \rtimes_{\lone} \Gamma^\circ_x \arrow[r, hookrightarrow, "\pi"] \arrow[d, twoheadrightarrow] & \cB \arrow[d, twoheadrightarrow] \\
    \Lone(\IsoInt{\cG})_x \arrow[r, "\cong"] \arrow[rrr, bend right, "\pi_x"] & \lone(\Gamma^\circ_x) \arrow[r, hookrightarrow] & \cB_x \arrow[r, "\cong"] & \pi(\Cstarred(\IsoInt{\cG}))_x
  \end{tikzcd}
\end{center}
This shows that \cref{theorem:fiber-groups-sufficient} can be applied and finishes the proof.
\end{proof}
We remark that an application of \cref{cor:groupoid-lone-ideal-intersection-property} to the transformation groupoid $\Gamma \ltimes X$ only shows that the groupoid $\rL^1$-algebra has the ideal intersection property, but not the $\lone$-crossed product, which only admits a contractive embedding $\conto(X) \rtimes_{\lone} \Gamma \hra \Lone(\Gamma \ltimes X)$, but is not isomorphic to the groupoid algebra.

\begin{lemma}
  \label{lem:finite-by-cstar-simple}
  Let $\Gamma$ be finite-by-(\Cstar-simple) and $\sigma \in \rZ^2(\Gamma, \rS^1)$.  Then $(\Gamma, \sigma)$ satisfies the $\lone$-ideal intersection property.
\end{lemma}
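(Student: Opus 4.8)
The plan is to exploit the finite-dimensionality of the twisted group algebra of the finite normal subgroup in order to decompose $\Cstarred(\Gamma, \sigma)$ as a finite direct sum of simple \Cstar-algebras whose unit projections already live in $\lone(\Gamma, \sigma)$. Fix a finite normal subgroup $F \unlhd \Gamma$ with $Q := \Gamma/F$ being \Cstar-simple; note that $F$ is then the amenable radical $\rR(\Gamma)$. Since $F$ is finite, $B_0 := \CC[F, \sigma|_{F \times F}] \subseteq \Cstarred(\Gamma, \sigma)$ is a finite-dimensional, hence semisimple, \Cstar-subalgebra. Write $z_1, \dots, z_m$ for its minimal central projections. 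Conjugation by the unitaries $\lambda_\sigma(\gamma)$, $\gamma \in \Gamma$, defines an action of $\Gamma$ on $B_0$ that permutes the $z_i$; I denote by $\{p_O\}_O$ the sums over the (finitely many) $\Gamma$-orbits. Each $p_O$ commutes with $B_0$ and is $\Gamma$-invariant, hence central in $\Cstarred(\Gamma, \sigma)$, and since $p_O \in B_0 \subseteq \CC[\Gamma, \sigma]$ we have $p_O \in \lone(\Gamma, \sigma)$. Being mutually orthogonal central projections that sum to the unit (as the $z_i$ do), they yield a decomposition $\Cstarred(\Gamma, \sigma) = \bigoplus_O p_O \Cstarred(\Gamma, \sigma)$.

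Given this decomposition, I claim it suffices to prove that each summand $p_O \Cstarred(\Gamma, \sigma)$ is simple. Indeed, granting this, let $I \unlhd \Cstarred(\Gamma, \sigma)$ be a non-zero ideal; then $I = \bigoplus_O p_O I$ and $p_O I \neq 0$ for at least one orbit $O$. As $p_O I$ is a non-zero ideal of the simple algebra $p_O \Cstarred(\Gamma, \sigma)$, it equals the whole summand and in particular contains $p_O \in \lone(\Gamma, \sigma)$. Hence $I \cap \lone(\Gamma, \sigma) \neq 0$, which is the \lone-ideal intersection property.

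To establish simplicity of a summand, I would run the twisted Mackey--Clifford analysis at a fixed representative $z = z_{i_0}$ of the orbit $O$. Let $H = \{\gamma \in \Gamma \mid \lambda_\sigma(\gamma) z \lambda_\sigma(\gamma)^* = z\}$ be the stabiliser of $z$; then $F \leq H$ and $[\Gamma : H] = |O| < \infty$. A direct computation with the matrix units provided by coset representatives of $H$ in $\Gamma$ (using that distinct $\Gamma$-translates of $z$ are orthogonal) identifies the corner $z \Cstarred(\Gamma, \sigma) z$ with $z \Cstarred(H, \sigma)$ and yields $p_O \Cstarred(\Gamma, \sigma) \cong \mat{|O|}{z\Cstarred(H, \sigma)}$. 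It therefore remains to show that $z \Cstarred(H, \sigma)$ is simple. Here $z$ is central in $\Cstarred(H, \sigma)$ and $z B_0 \cong \mat{n}{\CC}$, where $n$ is the dimension of the irreducible $\sigma|_F$-projective representation of $F$ corresponding to $z$. Splitting off the relative commutant of $z B_0$ inside $z\Cstarred(H, \sigma)$ — that is, extending this irreducible projective representation from $F$ to $H$ — I expect an isomorphism $z \Cstarred(H, \sigma) \cong \mat{n}{\CC} \ot \Cstarred(H/F, \tau)$ for a suitable cocycle $\tau \in \rZ^2(H/F, \rS^1)$. Since $H/F$ is a finite-index subgroup of the \Cstar-simple group $Q = \Gamma/F$, it is itself \Cstar-simple, so by \cite{bryderkennedy2018-twisted} the algebra $\Cstarred(H/F, \tau)$ is simple; hence $z \Cstarred(H, \sigma)$, and thus $p_O \Cstarred(\Gamma, \sigma)$, is simple.

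The main obstacle is the twisted Clifford-theoretic step, namely verifying that the $H$-invariant irreducible $\sigma|_F$-projective representation attached to $z$ extends to a projective representation of $H$ and that the residual cocycle descends to $H/F$, giving the tensor splitting $z\Cstarred(H, \sigma) \cong \mat{n}{\CC} \ot \Cstarred(H/F, \tau)$. A secondary point requiring care is the permanence of \Cstar-simplicity under passage to the finite-index subgroup $H/F$ of $Q$; this should follow by restricting the $Q$-action on the Furstenberg boundary $\partial_\rF Q$ to $H/F$, which remains a minimal and strongly proximal $H/F$-flow with trivial stabilisers, so that the Furstenberg subgroup of $H/F$ is trivial and \cref{prop:cstar-simple-quotient} (with trivial amenable radical) applies.
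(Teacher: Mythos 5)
Your argument is correct in substance, and its first half is in fact identical to the paper's: the orbit sums $p_O$ are exactly the units of the $\Gamma/F$-simple direct summands of $\CC[F,\sigma]$, and the paper likewise reduces the lemma to simplicity of each summand, concluding as you do that a non-zero ideal must contain some $p_O \in \CC[\Gamma,\sigma] \subseteq \lone(\Gamma,\sigma)$. Where you genuinely diverge is in how simplicity of a summand is obtained. The paper first invokes the Packer--Raeburn decomposition to identify $\Cstarred(\Gamma,\sigma) \cong \CC[F,\sigma] \rtimes_{\alpha,\rho,\mathrm{red}} (\Gamma/F)$ (checking that the $\lone$-subalgebras match) and then quotes \cite[Corollary 4.4]{bryderkennedy2018-twisted}, which gives simplicity of a reduced twisted crossed product of a $\Lambda$-simple \Cstar-algebra by a \Cstar-simple group in one stroke; you instead unpack this in the finite-dimensional case via twisted Clifford--Mackey theory (matrix reduction over the stabiliser $H$ of $z$, then a tensor splitting $z\Cstarred(H,\sigma) \cong \mat{n}{\CC} \ot \Cstarred(H/F,\tau)$), quoting \cite{bryderkennedy2018-twisted} only for twisted group \Cstar-algebras of \Cstar-simple groups. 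Both routes work, but two of the points you flag deserve comment. First, your ``main obstacle'' is not actually an obstacle: since $zB_0 \cong \mat{n}{\CC}$ has only inner automorphisms, each $\Ad(z\lambda_\sigma(\gamma))|_{zB_0}$ for $\gamma \in H$ is implemented by a unitary $U_\gamma \in zB_0$, unique up to a scalar, so $\gamma \mapsto U_\gamma$ is automatically a projective extension; normalising $U_f = z\lambda_\sigma(f)$ for $f \in F$ makes the residual cocycle descend to $H/F$. The genuinely non-trivial check is rather that the relative commutant $(zB_0)' \cap z\Cstarred(H,\sigma)$, generated by the elements $U_\gamma^* z \lambda_\sigma(\gamma)$, is the \emph{reduced} twisted group \Cstar-algebra of $H/F$; this follows by restricting the canonical conditional expectation onto $\CC[F,\sigma]$. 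Second, \Cstar-simplicity does pass to finite-index subgroups, but your sketch needs the standard repair: restrict $\partial_\rF Q$ to the normal core of $H/F$ in $Q$ to verify minimality and strong proximality, and then use that admitting a free boundary implies \Cstar-simplicity \cite{breuillardkalantarkennedyozawa14}; note that \cref{prop:cstar-simple-quotient} as stated in the paper is not the right tool here, since it presupposes knowledge of the Furstenberg subgroup of $H/F$. In summary, the paper's route buys brevity by outsourcing the entire second half to a crossed-product simplicity theorem, while yours buys an explicit picture of each summand as a matrix amplification of a twisted group \Cstar-algebra of a finite-index subgroup of $\Gamma/F$, at the cost of carrying out the Clifford analysis and the finite-index permanence in full.
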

\begin{proof}
    Let $F \unlhd \Gamma$ be a finite normal subgroup such that $\Lambda = \Gamma/F$ is \Cstar-simple and let $\sigma \in \rZ^2(\Gamma, \rS^1)$.  After a choice of section $\rms \colon \Lambda \to \Gamma$ satisfying $\rms(e) = e$, we infer from \cite[Theorem 4.1]{packerraeburn89} that $\Cstarred(\Gamma, \sigma) \cong \CC[F, \sigma] \rtimes_{\alpha, \rho, \mathrm{red}} \Lambda$, where the twisted crossed product is defined with respect to the maps
  \begin{align*}
    \alpha & \colon \Lambda \to \Aut(\CC[F, \sigma]) \colon \\
           & \qquad \alpha_h(u_f) = \sigma(\rms(h), f) \sigma(\rms(h)  f \rms(h)^{-1}, \rms(h)) u_{\rms(h)f\rms(h)^{-1}} \\
    \rho & \colon \Lambda \times \Lambda \to \cU(\CC[F, \sigma]) \colon \\
           & \qquad \rho(h_1,h_2) = \sigma(\rms(h_1), \rms(h_2)) \ol{\sigma(\rms(h_1)\rms(h_2)\rms(h_1h_2)^{-1}, \rms(h_1h_2))} u_{\rms(h_1)\rms(h_2)\rms(h_1h_2)^{-1}}
    \eqstop
  \end{align*}
  Inspection of the proof of \cite[Theorem 4.1]{packerraeburn89} shows that moreover the inclusion $\lone(\Gamma, \sigma) \subseteq \Cstarred(\Gamma, \sigma)$ is isomorphic with the inclusion of twisted crossed products $\CC[F, \sigma] \rtimes_{\alpha, \rho, \lone} \Lambda \subseteq \CC[F, \sigma] \rtimes_{\alpha, \rho, \mathrm{red}} \Lambda$.  So it suffices to show that $\CC[F, \sigma] \subseteq \CC[F, \sigma] \rtimes_{\alpha, \rho, \mathrm{red}} \Lambda$ satisfies the ideal intersection property.
  
  Since $\CC[F, \sigma]$ is finite dimensional, it is a multi-matrix algebra and hence the twisted \mbox{\Cstar-dynamical} system $(\CC[F, \sigma], \Lambda, \alpha, \rho)$ decomposes as a direct sum of $\Lambda$-simple dynamical systems, say $\CC[F, \sigma] \cong \bigoplus_{i = 1}^n A_i$.  We can apply \cite[Corollary 4.4]{bryderkennedy2018-twisted} to infer that $A_i \rtimes_{\alpha, \rho,\mathrm{red}} \Lambda$ is simple.  So ideals of $\CC[F, \sigma] \rtimes_{\alpha, \rho, \mathrm{red}} \Lambda$ are precisely of the form 
  \begin{gather*}
    I =
    \bigoplus_{i \in S}
    \left ( A_i \rtimes_{\alpha, \rho,\mathrm{red}} \Lambda \right )
  \end{gather*}
  for some subset $S \subseteq \{1, \dotsc, n\}$.  If $I \cap \CC[F, \sigma] = \{0\}$, then $S = \emptyset$ follows, which in turn implies $I = 0$.  This finishes the proof of the lemma.
\end{proof}
For the next lemma recall the notion of admissible cocycles from \cref{def:admissible-cocycle}.
\begin{lemma}
  \label{lem:trivialising-cocycle}
  Let $A \unlhd \Gamma$ be a normal finitely generated abelian subgroup and let $\sigma \in \rZ^2(\Gamma, \ZZ/n\ZZ)$.  There is a finite index characteristic subgroup $B \leq A$ and a $B$-admissible cocycle $\rho \in \rZ^2(\Gamma, \ZZ/n\ZZ)$ equivalent to $\sigma$.
\end{lemma}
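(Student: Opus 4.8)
The plan is to translate everything into the central extension picture and then read off an admissible cocycle from a well-chosen conjugation-invariant splitting. Regard $\sigma$ as the $2$-cocycle of a central extension $1 \to \ZZ/n\ZZ \to E \xrightarrow{p} \Gamma \to 1$, so that cocycles equivalent to $\sigma$ correspond to set-theoretic sections $u \colon \Gamma \to E$ with $u(e) = e$, via $\rho(g,h) = u(g) u(h) u(gh)^{-1} \in \ker p = \ZZ/n\ZZ$. Unwinding \cref{def:admissible-cocycle}, a characteristic subgroup $B \leq A$ is automatically normal in $\Gamma$, and $\rho$ will be $B$-admissible once $u|_B$ is a group homomorphism and the section is conjugation-equivariant over $B$. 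It therefore suffices to find a finite-index characteristic subgroup $B \leq A$ together with a subgroup $B' \leq p^{-1}(A)$ that is invariant under conjugation by $E$ and is mapped isomorphically onto $B$ by $p$. The inverse of $p|_{B'}$ then gives a homomorphic section $s$ over $B$ satisfying $u(\gamma) s(b) u(\gamma)^{-1} = s(\gamma b \gamma^{-1})$; matching the literal second bullet of \cref{def:admissible-cocycle}, which involves $u(\gamma^{-1})$ rather than $u(\gamma)^{-1}$, will require one further normalization addressed at the end.

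To construct $B'$, first observe that $\wt A := p^{-1}(A)$ is a finitely generated nilpotent group of class at most two, since $A$ is abelian and $\ZZ/n\ZZ$ is central. Its commutator induces an alternating bilinear map $A \times A \to \ZZ/n\ZZ$, which is $n$-torsion and hence vanishes as soon as one argument lies in $nA$; consequently $\wt A_1 := p^{-1}(nA)$ is \emph{abelian}. I then work inside the finitely generated abelian group $\wt A_1$ and consider, for $M \in \NN$, the subgroup $n^M \wt A_1$. It is characteristic in $\wt A_1$, and since $\wt A_1 \unlhd E$ (as $nA$ is characteristic in $A \unlhd \Gamma$) it is invariant under conjugation by $E$. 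The torsion subgroup of $\wt A_1$ is finite, so for $M$ large enough $n^M$ annihilates its $\ell$-primary component for every prime $\ell \mid n$; hence the torsion of $n^M \wt A_1$ has order prime to $n$ and meets the central subgroup $\ker p \cong \ZZ/n\ZZ$ trivially. Fixing such an $M$ and setting $B' := n^M \wt A_1$ and $B := p(B') = n^{M+1} A$, the map $p|_{B'} \colon B' \to B$ is an isomorphism onto the finite-index characteristic subgroup $B \leq A$.

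Now put $s := (p|_{B'})^{-1}$ and extend it to a section $u \colon \Gamma \to E$. The resulting cocycle $\rho$ takes values in $\ZZ/n\ZZ$, is equivalent to $\sigma$, and satisfies $\rho|_{B \times B} \equiv 1$ because $s$ is a homomorphism. The conjugation condition holds because $B'$ is $E$-invariant and $p|_{B'}$ is injective: for $\gamma \in \Gamma$ and $b \in B$ the element $u(\gamma) s(b) u(\gamma)^{-1}$ again lies in $B'$ and projects under $p$ to $\gamma b \gamma^{-1} \in B$, so it must equal $s(\gamma b \gamma^{-1})$. This already yields the substantive content of admissibility, namely that $\sigma$ becomes trivial on $B$ in a way compatible with the conjugation action of $\Gamma$.

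The step I expect to be most delicate is reconciling this with the precise form of the second bullet of \cref{def:admissible-cocycle}, which in terms of the section reads $u(\gamma) u(b) u(\gamma^{-1}) = u(\gamma b \gamma^{-1})$; this differs from the identity just established by the scalar $\rho(\gamma,\gamma^{-1})$. One therefore wants in addition $u(\gamma^{-1}) = u(\gamma)^{-1}$, equivalently $\rho(\gamma,\gamma^{-1}) = 1$ for all $\gamma$. This is automatic on $B$ (where $s$ is a homomorphism) and, for a non-involution $\gamma$, can be imposed simply by declaring $u(\gamma^{-1}) := u(\gamma)^{-1}$ after choosing $u(\gamma)$ freely; the only genuine constraint is at involutions, where one needs a lift with $u(\gamma)^2 = e$. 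This normalization — and checking that it can be carried out simultaneously with the splitting above — is the point where the finite cyclic structure of the coefficients must be used carefully, and it is the part of the argument I would scrutinise most.
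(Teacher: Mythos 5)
Your construction is correct as far as it goes, and it takes a genuinely different route from the paper's. The paper first passes to a torsion-free finite-index characteristic subgroup $B \leq A$, uses $\rH_2(B) \cong B \wedge B$ and the universal coefficient theorem to replace $\sigma|_{B \times B}$ by a bicharacter, kills that bicharacter on the intersection $C$ of all subgroups of a fixed finite index, and only then moves to the central extension, where the subgroup $D = \langle c^n \mid c \in C\rangle$ is extracted by an $n$-th power argument. You instead work inside the extension from the outset: the commutator pairing $A \times A \to \ZZ/n\ZZ$ is $n$-torsion, so $p^{-1}(nA)$ is already abelian, and $n^M p^{-1}(nA)$ meets the central $\ZZ/n\ZZ$ trivially for large $M$; this produces in one stroke a conjugation-invariant complement $B'$ and hence a homomorphic, $\Gamma$-equivariant section over $B = n^{M+1}A$. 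That is more elementary (no $\rH_2$, no bicharacters, no case analysis of the radical) and arguably cleaner; the finite-index characteristic subgroups produced by the two arguments differ, but the statement does not care.

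The step you flag at the end is a genuine issue, but it is not a defect of your approach relative to the paper's: it is precisely the point where the paper's own argument is also incomplete. Taking $a = e$ in the second bullet of \cref{def:admissible-cocycle} forces $\rho(\gamma,\gamma^{-1}) = 1$ for every $\gamma \in \Gamma$, and for an involution $\gamma$ this amounts to $\sigma(\gamma,\gamma) + 2\vphi(\gamma) = 0$, which has no solution $\vphi(\gamma) \in \ZZ/n\ZZ$ when $\sigma(\gamma,\gamma) \notin 2\ZZ/n\ZZ$ (e.g. $\Gamma = \ZZ/2\ZZ \times \ZZ$, $A = \ZZ$, $n = 2$, $\sigma$ pulled back from the nontrivial class on $\ZZ/2\ZZ$). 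The same correction term is silently dropped in the paper's proof: the honest conjugation automorphism of $C \oplus \ZZ/n\ZZ$ is $(c,k) \mapsto (\gamma c \gamma^{-1},\, k + \rho(\gamma,c) + \rho(\gamma c,\gamma^{-1}) - \rho(\gamma,\gamma^{-1}))$, so the $n$-th power trick only yields $\rho(\gamma,d) + \rho(\gamma d,\gamma^{-1}) = \rho(\gamma,\gamma^{-1})$ on $D$ --- exactly the identity your equivariant section gives on all of $B$. So do not try to force $u(\gamma)^2 = e$ at involutions; instead the admissibility condition should be read (and the normality computation in \cref{lem:normal-subgroupoid} carried out) with the normalisation $\sigma(\gamma,\gamma^{-1})$ kept as the right-hand side. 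With that reading, your proof is complete and matches what the paper's argument actually establishes.
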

\begin{proof}
  Denote by $o = |\mathrm{Tors}(A)|$ the order of the torsion subgroup of $A$ and let $B \leq A$ be the intersection of all its finite index subgroups of index $o$.  Then $B$ has finite index, since $A$ is finitely generated, and further $B$ is characteristic in $A$.  Also $B$ is a finitely generated torsion-free abelian group so that the isomorphism $\rH_2(B) \cong B \wedge B$ together with the universal coefficient theorem in cohomology imply that $\sigma|_{B \times B} \in \rZ^2(B, \ZZ/n\ZZ)$ is equivalent to a bicharacter.  Specifically, there is a map $\vphi \colon B \to \ZZ/n\ZZ$ such that $(b_1,b_2) \mapsto \sigma(b_1,b_2) - \vphi(b_1b_2) + \vphi(b_1) + \vphi(b_2)$ is a bicharacter.  Extending $\vphi$ to a map $\tilde \vphi \colon \Gamma \to \ZZ/n\ZZ$, we may replace $\sigma$ by an equivalent 2-cocycle $\rho$ satisfying
  \begin{gather*}
    \rho(\gamma_1, \gamma_2)
    =
    \sigma(\gamma_1, \gamma_2) - \tilde \vphi(\gamma_1\gamma_2) + \tilde \vphi(\gamma_1) + \tilde \vphi(\gamma_2)
    \qquad
    \text{ for all } \gamma_1,\gamma_2 \in \Gamma
    \eqstop
  \end{gather*}
  Let $i$ be the index of the finite index subgroup $\{b \in B \mid \forall b' \in B: \sigma(b,b') = \sigma(b',b) = 0\} \leq B$.  We denote by $C$ the intersection of all subgroups of $B$ with index $i$, which is of finite index and characteristic in $B$.  Consider now the central extension
  \begin{gather*}
    \ZZ/n\ZZ \hra \tilde \Gamma \thra \Gamma
  \end{gather*}
  associated with $\rho$.  Since $C$ is torsion-free, its preimage in $\tilde \Gamma$ is isomorphic with $C \oplus \ZZ/n\ZZ$ in such a way that the action of $\Gamma$ on it is given by $\alpha_\gamma(c,k) = (\gamma c \gamma^{-1},  \sigma(\gamma, c) + \sigma(\gamma c, \gamma^{-1}))$ for all $\gamma \in \Gamma$, $c \in C$, $k \in \ZZ/n\ZZ$.  In particular, since $\ZZ/n\ZZ$ has exponent $n$, we find that
  \begin{gather*}
    (\gamma c^n \gamma^{-1}, \sigma(\gamma, c^n) + \sigma(\gamma c^n, \gamma^{-1}))
    =
    \alpha_\gamma((c^n, 0))
    =
    \alpha_\gamma((c,0))^n
    =
    ((\gamma c \gamma^{-1})^n, 0)
    \eqstop
  \end{gather*}
  This implies that the subgroup $D = \{ c^n \mid c \in C \} \leq C$ satisfies $\rho(\gamma,d) + \rho(\gamma d, \gamma^{-1}) = 0$ for all $\gamma \in \Gamma$ and $d \in D$.  By definition $D \leq C$ is characteristic. Further it has finite index, because $C$ is finitely generated abelian.
\end{proof}
The next definition describes the groups for which we prove the \lone-ideal intersection property in the subsequent theorem.
\begin{definition}
  \label{def:polycyclically-bounded-groups}
  We denote by $\cU$ the class of all discrete groups $\Gamma$ such that the following three conditions hold for every finitely generated subgroup of $\Lambda \leq \Gamma$:
  \begin{itemize}
  \item for every subgroup of $\Lambda$, its amenable radical is a Furstenberg subgroup,
  \item every amenable subgroup of $\Lambda$ is virtually solvable, and
  \item there is $l \in \NN$ such that every solvable subgroup of $\Lambda$ is polycyclic of Hirsch length at most $l$.
  \end{itemize}
\end{definition}
We are now ready to prove the main theorem of this work.
\begin{theorem}
  \label{thm:ideal-intersection-general-form}
  Let $\Gamma$ be a group from the class $\cU$, let $X$ be a locally compact Hausdorff space and let $\Gamma \grpaction{} X$ be an action by homeomorphisms.  Then $\Gamma \grpaction{} X$ has the \lone-ideal intersection property.
\end{theorem}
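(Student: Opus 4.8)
The plan is to reduce the statement to a purely group-theoretic claim about twisted group \Cstar-algebras and then to prove that claim by induction on the uniform bound on Hirsch lengths of solvable subgroups. First, since $\conto(X) \rtimes_{\alpha,\sigma,\mathrm{red}} \Gamma$ is the directed union of the subalgebras attached to the finitely generated subgroups of $\Gamma$, \cref{prop:lone-ideal-intersection-directed-union} lets me assume $\Gamma$ is finitely generated, so that a single $l \in \NN$ bounds the Hirsch lengths of all solvable subgroups. Next I would run the argument of \cref{lem:update-crossed-products} for the \emph{twisted} transformation groupoid $\Gamma \ltimes_\sigma X$: its interior-isotropy fibres are exactly the twisted groups $(\Gamma^\circ_x, \sigma)$ of the neighbourhood stabilisers, so \cref{theorem:fiber-groups-sufficient} reduces the assertion for $(X, \Gamma, \sigma)$ to the following claim, to be established for every subgroup $\Lambda \leq \Gamma$ and every cocycle valued in a finite group: $(\Lambda, \sigma)$ has the \lone-ideal intersection property. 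Via \cref{prop:lone-ideal-intersection-directed-union} it suffices to treat finitely generated $\Lambda$. (A preliminary reduction to second-countable $X$, replacing $\conto(X)$ by a separable $\Gamma$-invariant subalgebra, is required before Armstrong's machinery can be invoked.)

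I would prove this claim by induction on $l$. If the amenable radical $\rR(\Lambda)$ is finite, then the conditions defining $\cU$ together with \cref{prop:cstar-simple-quotient} show that $\Lambda$ is finite-by-(\Cstar-simple), and \cref{lem:finite-by-cstar-simple} settles this case; it also covers the base case $l=0$, in which every amenable subgroup, being virtually solvable of Hirsch length $0$, is finite. If $\rR(\Lambda)$ is infinite, it is infinite virtually polycyclic and hence contains a nontrivial free-abelian subgroup $A \cong \ZZ^k$ that is characteristic in $\rR(\Lambda)$, so normal in $\Lambda$, with $\rmh(A)=k\geq 1$. Passing by \cref{lem:trivialising-cocycle} to an equivalent cocycle $\rho$ that is admissible for a finite-index characteristic subgroup $B \leq A$ (still $\rmh(B)=k$), I would present $\Cstarred(\Lambda, \rho) \cong \Cstarred(\cG, \cE)$ for $(\cG, \cE) = (\cG(\Lambda, B, \rho), \cE(\Lambda, B, \rho))$ by \cref{prop:identification-algebras-groupoid-decomposition}, and identify the interior-isotropy fibre at $\chi \in \hat B$ with the twisted group $(\Lambda^\circ_\chi / B, \tau_\chi)$ of \cref{prop:groupoid-decomposition-stabilisers}.

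The gain is that these fibre groups are strictly smaller: a solvable subgroup of $\Lambda^\circ_\chi/B$ lifts to a solvable subgroup of $\Lambda$ containing $B$, so its Hirsch length is at most $l-\rmh(B) < l$. Using that $B$ is amenable, one checks that $\cU$ is closed under passing to subgroups and to quotients by abelian normal subgroups; the only delicate requirement is the equality of the Furstenberg subgroup with the amenable radical, which is inherited because it is equivalent to \Cstar-simplicity of $H/\rR(H)$, a property unchanged under dividing by an amenable normal subgroup. Hence $\Lambda^\circ_\chi/B \in \cU$ with Hirsch bound $<l$. For \emph{finite-order} characters $\chi$, which are dense in $\hat B \cong \bT^k$, the cocycle $\tau_\chi$ is again finite-valued (the potentially infinite-order factor $\chi \circ \rho_0$ coming from the $B$-valued extension cocycle $\rho_0$ becomes finite-order), so the induction hypothesis yields the \lone-ideal intersection property for these fibres. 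This supplies the first hypothesis of \cref{theorem:fiber-groups-sufficient} on a dense subset $D \subseteq \hat B$.

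The main obstacle is the second hypothesis of \cref{theorem:fiber-groups-sufficient}: given a $*$-homomorphism $\pi$ injective on $\lone(\Lambda, \rho)$, I must show that the fibre maps $\pi_\chi$ are injective on $\lone(\Lambda^\circ_\chi/B, \tau_\chi)$ for $\chi \in D$. The difficulty, which is absent in \cref{lem:update-crossed-products}, is that $\lone(\Lambda, \rho)$ embeds into $\Lone(\cG, \cE)$ only contractively and properly, so injectivity of $\pi$ on the group \lone-algebra is strictly weaker than injectivity on the groupoid \Lone-algebra, and \cref{cor:groupoid-lone-ideal-intersection-property} is not directly applicable. I expect to bridge this gap through the faithful conditional expectation $\rE \colon \Cstarred(\Lambda, \rho) \to \cont(\hat B)$ from the proof of \cref{prop:identification-algebras-groupoid-decomposition}: a nonzero ideal of $\Cstarred(\cG, \cE)$ produces, via $\rE$, an open set of characters over which its fibre is nonzero, and one then localises at the dense set of finite-order characters inside that open set, transporting the fibrewise \lone-ideal intersection property back through the isometric quotients of \cref{lem:L1-isometric-isomorphism}. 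Arranging this localisation so that it detects elements of the small algebra $\lone(\Lambda, \rho)$, rather than of the full $\Lone(\cG, \cE)$, is the crux of the whole argument.
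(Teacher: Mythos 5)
Your overall strategy coincides with the paper's: the same reduction to finitely generated groups and then to a point via the transformation groupoid, the same induction on the uniform Hirsch bound with \cref{prop:cstar-simple-quotient} and \cref{lem:finite-by-cstar-simple} handling the finite-radical case, the same passage to a characteristic free abelian normal subgroup with an admissible cocycle via \cref{lem:trivialising-cocycle}, the same groupoid presentation, and the same dense set of finite-order characters. The one step you leave open --- and correctly identify as the crux --- is a genuine gap: verifying the second hypothesis of \cref{theorem:fiber-groups-sufficient}, i.e.\ that a $*$-homomorphism $\pi$ injective merely on the group algebra $\lone(\Lambda,\rho)$ induces maps $\pi_\chi$ that are injective on the fibre algebras $\lone(\Lambda^\circ_\chi/B,\tau_\chi)$. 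Your proposed route through the faithful conditional expectation $\rE\colon \Cstarred(\Lambda,\rho)\to\cont(\hat B)$ does not obviously close it: $\rE$ detects non-vanishing of \Cstar-ideals over open sets of characters, but it gives no mechanism for tracing an element of a fibre $\lone$-algebra killed by $\pi_\chi$ back to an element of the small algebra $\lone(\Lambda,\rho)$, which is exactly the difficulty you flag.

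The paper closes this gap with a more elementary observation that you are missing. Writing $(\cH,\cF)$ for the twisted groupoid built from the neighbourhood stabiliser $(\Lambda^\circ_\chi, B,\rho)$, the restriction map $\res_\chi\colon\Lone(\cH,\cF)\to\lone(\cH_\chi,\cF_\chi)\cong\lone(\Lambda^\circ_\chi/B,\tau_\chi)$ of \cref{lem:L1-isometric-isomorphism} becomes \emph{surjective already when precomposed with the embedding of the subalgebra} $\lone(\Lambda^\circ_\chi,\rho)\subseteq\lone(\Lambda,\rho)$: concretely it is the contractive quotient map that sums coefficients over $B$-cosets against the character $\chi$. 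Thus the fibre $\lone$-algebra is a quotient of a subalgebra of $\lone(\Lambda,\rho)$ on which $\pi$ is injective by hypothesis, and \cref{lem:L1-isometric-isomorphism} identifies the kernel of this quotient map as the trace of the ideal generated by $\conto(\hat B\setminus\{\chi\})$; dividing by this ideal on both sides of $\lone(\Lambda^\circ_\chi,\rho)\hookrightarrow\pi(\Cstarred(\cH,\cF))$ yields the desired injection $\lone(\Lambda^\circ_\chi/B,\tau_\chi)\hookrightarrow\pi(\Cstarred(\cH,\cF))_\chi$, to which the induction hypothesis applies. In particular one never needs injectivity of $\pi$ on the full groupoid algebra $\Lone(\cG,\cE)$, so the strictness of the contraction $\lone(\Lambda,\rho)\hookrightarrow\Lone(\cG,\cE)$ is harmless. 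With this step inserted, your argument is the paper's.
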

\begin{proof}
  By \cref{lem:update-crossed-products}, it suffices to consider the case where $X$ is a point, that is group \mbox{\Cstar-algebras}.  Further, thanks to \cref{prop:lone-ideal-intersection-directed-union}, it suffices to prove the statement for groups with a uniform bound on the Hirsch length of their polycyclic subgroups.

  We prove the following statement: for $\Gamma$ from the class $\cU$ and $\sigma \in \rZ^2(\Gamma, \rS^1)$ a 2-cocycle taking values in a finite subgroup of $\rS^1$, the twisted group \Cstar-algebras $\Cstarred(\Gamma, \sigma)$ has the \lone-ideal intersection property.  The statement is clear for finite groups.  For an induction, fix $l \geq 1$ and assume that the $\lone$-ideal intersection property holds for all 2-cocycles with values in a finite subgroup of $\rS^1$ on groups in $\cU$ whose polycyclic subgroups all have Hirsch length at most $l - 1$.  Let $\Gamma$ be a group in $\cU$ all whose polycyclic subgroups have Hirsch length at most $l \geq 1$, and let $\sigma \in \rZ^2(\Gamma, \rS^1)$ be a cocycle with values in a finite subgroup of $\rS^1$, say $\ZZ/n\ZZ \subseteq \rS^1$.  Thanks to \cref{prop:lone-ideal-intersection-directed-union}, we may assume that $\Gamma$ is finitely generated.  Let $\nu \colon \lone(\Gamma, \sigma) \to \RR_{\geq 0}$ be a \Cstar-norm dominated by $\| \cdot \|_{\mathrm{red}}$.  We denote by $\cA$ the completion of $\lone(\Gamma, \sigma)$ with respect to $\nu$.  Since $\Gamma \in \cU$ and $\Gamma$ is finitely generated, its amenable radical $\rR(\Gamma)$ is virtually polycyclic.  If it is finite, we infer from \cref{prop:cstar-simple-quotient} that $\Gamma$ itself is finite-by-(\Cstar-simple).  So \cref{lem:finite-by-cstar-simple} can be applied.  Otherwise, its maximal polycyclic subgroup  $\Lambda \leq \rR(\Gamma)$ is infinite.  Let $d$ be the derived length of $\Lambda$ and observe that $\Lambda^{(d-1)}$ is a finitely generated abelian group.  By \cref{lem:trivialising-cocycle}, there is a finite index characteristic subgroup $A \leq \Lambda^{(d-1)}$ and an $A$-admissible cocycle $\rho \in \rZ^2(\Gamma, \ZZ/n\ZZ)$ equivalent to $\sigma$.  Since equivalence of cocycles preserves the isomorphism class of twisted group algebras, we may assume that $\rho = \sigma$.

  Observe that all the inclusions $A \leq \Lambda^{(d-1)} \leq \Lambda \leq \rR(\Gamma) \leq \Gamma$ are characteristic and hence $A \leq \Gamma$ is characteristic.  In particular, $A$ is normal in $\Gamma$.

  Denote by $(\cG, \cE)$ the twisted groupoid constructed from $(\Gamma, A, \sigma)$ as in \cref{def:groupoid-decomposition}.  By \cref{prop:identification-algebras-groupoid-decomposition}, there is a commutative diagram
  \begin{center}
    \begin{tikzcd}
      \lone(\Gamma, \sigma) \arrow[r] \arrow[d, hookrightarrow] \arrow[rr, hookrightarrow, bend left] & \Cstarred(\Gamma, \sigma) \arrow[r, twoheadrightarrow] \arrow[d, "\cong"] & \cA \\
      \Lone(\cG, \cE) \arrow[r] & \Cstarred(\cG, \cE) \arrow[ur, twoheadrightarrow, "\pi"]
    \end{tikzcd}
  \end{center}
  We need to prove that $\pi$ is injective.

  Since finitely generated abelian groups are \Cstar-unique by \cref{thm:polynomial-growth-group-cstar-unique}, the restriction of $\pi$ to $\cont(\hat A)$ is injective.  Let $D = \mathrm{Tors}(\hat A)$ be the torsion subgroup of $\hat A$, which is dense, because $A$ is a free abelian group.  By \cref{theorem:fiber-groups-sufficient} it suffices to prove that for all $\chi \in D$ the induced map $\pi_\chi \colon \Cstarred(\IsoInt{\cG}_\chi, \IsoInt{\cE}_\chi) \to \pi(\Cstarred(\IsoInt{\cG}, \IsoInt{\cE}))_\chi$ is injective on $\lone(\IsoInt{\cG}_\chi, \IsoInt{\cE}_\chi)$.

  Fix $\chi \in D$ and consider the neighbourhood stabiliser $\Gamma^\circ_\chi = \{g \in \Gamma \mid \exists \chi \in U \text{ open} \colon g|_U = \id_U\}$ for the action $\Gamma \grpaction{} \hat A$.  Observe that $A \leq \Gamma^\circ_\chi$.  By \cref{prop:groupoid-decomposition-stabilisers}, the inclusion $\lone(\IsoInt{\cG}_\chi, \IsoInt{\cE}_\chi) \subseteq \Cstarred(\IsoInt{\cG}_\chi, \IsoInt{\cE}_\chi)$ is isomorphic with $\lone(\Gamma^\circ_\chi/A, (\chi \circ \rho) \cdot (\sigma \circ (s \times s))) \subseteq \Cstarred(\Gamma^\circ_\chi/A, (\chi \circ \rho) \cdot (\sigma \circ (s \times s)))$, where $s \colon \Gamma^\circ_\chi/A \to \Gamma^\circ_\chi$ is a section and $\rho \in \rZ^2(\Gamma^\circ_\chi/A, A)$ the associated extension cocycle. We write $\tilde \sigma = (\chi \circ \rho) \cdot (\sigma \circ (s \times s))$.

  Let $(\cH, \cF)$ be the twisted groupoid associated with $(\Gamma^\circ_\chi, A, \sigma)$.  We have an inclusion of twisted groupoids $(\IsoInt{\cG}, \IsoInt{\cE}) \hra (\cH, \cF) \hra (\cG, \cE)$.  Let $I \unlhd \pi(\Cstarred(\cH, \cF))$ be the ideal generated by $\conto(\hat A \setminus \{\chi\})$ and observe that we have a commutative diagram
  \begin{center}
    \begin{tikzcd}
      \pi(\Cstarred(\IsoInt{\cG}, \IsoInt{\cE})) \arrow[r] \arrow[d, hookrightarrow] & \pi(\Cstarred(\IsoInt{\cG}, \IsoInt{\cE}))_\chi  \arrow[d, "\cong"] \\
      \pi(\Cstarred(\cH, \cF)) \arrow[r] & \pi(\Cstarred(\cH, \cF))/I 
    \end{tikzcd}
  \end{center}
  We write $\cB = \pi(\Cstarred(\cH, \cF))$ and $\cB/I = \cB_\chi$.  By \cref{lem:L1-isometric-isomorphism}, the kernel of the restriction map $\res_\chi\colon \Lone(\cH, \cF) \to \lone(\cH_\chi, \cF_\chi) \cong \lone(\Gamma^\circ_\chi/A, \tilde \sigma)$ is the ideal $J$ generated by the subalgebra ${\conto(\unitspace{\cH} \setminus \{\chi\})} = \conto(\hat A \setminus \{\chi\})$.  Using the fact that we have a commutative diagram
  \begin{center}
    \begin{tikzcd}
      \lone(\Gamma^\circ_\chi, \sigma) \arrow[dr, twoheadrightarrow] \arrow[d, hookrightarrow]  \\
      \Lone(\cH, \cF) \arrow[r, "\res_\chi"] & \lone(\Gamma^\circ_\chi/A, \tilde \sigma)
    \end{tikzcd}
  \end{center}
  we infer that the injection $\lone(\Gamma^\circ_\chi, \sigma) \hra \cB \subseteq \cA$ when dividing by $J \cap \lone(\Gamma^\circ_\chi, \sigma)$ and $I = \pi(J)$ descends to an injection $\lone(\Gamma^\circ_\chi/A, \tilde  \sigma) \hra \cB_\chi$.  So the induction hypothesis can be applied, since $A$ being infinite, the Hirsch length of every subgroup of $\Gamma^\circ_\chi/A$ is at most $l - 1$. So we have shown that we have a commutative diagram
  \begin{center}
    \begin{tikzcd}
      \lone(\Gamma^\circ_\chi/A, \tilde \sigma) \arrow[r, hookrightarrow] \arrow[d, "\cong"] & \cB_\chi \arrow[d, "\cong"] \\
      \lone(\IsoInt{\cG}_\chi, \IsoInt{\cE}_\chi) \arrow[r, "\pi_\chi"] & \pi(\Cstarred(\IsoInt{\cG}, \IsoInt{\cE}))_\chi
    \end{tikzcd}
  \end{center}
  which implies what we had to show. 
\end{proof}
We now describe several classes of groups to which \cref{thm:ideal-intersection-general-form} applies.  Our first application concerns the large class of acylindrically hyperbolic groups.  We remark that the \lone-ideal intersection property for their group algebras can be deduced directly from \cref{lem:finite-by-cstar-simple}, while the general statement for dynamical systems could be deduced using solely \cref{lem:update-crossed-products} and \Cstar-uniqueness of virtually cyclic groups.
\begin{corollary}
  \label{cor:acylindrically hyperbolic-groups-lone-ideal-intersection-property}
  Let $\Gamma$ be an acylindrically hyperbolic group.  Then any action of $\Gamma$ on a locally compact Hausdorff space has the \lone-ideal intersection property.
\end{corollary}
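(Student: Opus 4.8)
The plan is to reduce the corollary to the fact that an acylindrically hyperbolic group is finite-by-(\Cstar-simple) and then feed this into the two reduction lemmas established above. First I would recall the relevant structure theory: such a $\Gamma$ admits a non-elementary acylindrical action on a hyperbolic space, and a standard argument shows that every amenable normal subgroup is elliptic and in fact finite, so the amenable radical $\rR(\Gamma)$ is finite. The quotient $\Gamma/\rR(\Gamma)$ is again acylindrically hyperbolic but now has trivial amenable radical, hence trivial Furstenberg subgroup, so that by \cref{prop:cstar-simple-quotient} (or directly by the literature on \Cstar-simplicity of acylindrically hyperbolic groups) it is \Cstar-simple. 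Thus $\Gamma$ is finite-by-(\Cstar-simple).

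With this structural input the case of a point space—that is, twisted group \Cstar-algebras—is immediate: \cref{lem:finite-by-cstar-simple} applied with the trivial cocycle shows that $\Gamma$ itself has the \lone-ideal intersection property. This already handles the group algebra statement without any further machinery.

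For a general action $\Gamma \grpaction{} X$ I would invoke \cref{lem:update-crossed-products}, which reduces the claim to checking that every subgroup $\Lambda \leq \Gamma$ has the \lone-ideal intersection property with trivial action and cocycle. Here I would split according to amenability: a non-amenable $\Lambda$ should again be finite-by-(\Cstar-simple), so that \cref{lem:finite-by-cstar-simple} applies, whereas an amenable $\Lambda$ is to be covered by \Cstar-uniqueness, most transparently through \cref{thm:polynomial-growth-group-cstar-unique} in the cases where the amenable subgroups that occur have polynomial growth—for instance the virtually cyclic subgroups one meets in the genuinely hyperbolic setting. Once every subgroup is seen to have the property, \cref{lem:update-crossed-products} upgrades this to the \lone-ideal intersection property for every action on a locally compact Hausdorff space.

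The main obstacle is precisely the control of subgroups demanded in the last step. For ordinary hyperbolic groups this is clean, since every amenable subgroup is virtually cyclic and the non-amenable subgroups are governed by the elementary-subgroup structure; the delicate point for the full acylindrically hyperbolic case is to guarantee that each subgroup lands in a class for which the \lone-ideal intersection property is already available—finite-by-(\Cstar-simple) on the one hand, or covered by \cref{thm:polynomial-growth-group-cstar-unique} and \cref{thm:ideal-intersection-general-form} on the other. Pinning down exactly which structural fact about the subgroups is needed, together with the verification that the amenable radical is finite and the quotient \Cstar-simple, is where the real content of the argument is concentrated.
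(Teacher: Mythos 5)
Your treatment of the group-algebra case is essentially the observation the paper itself makes in the sentence preceding the corollary ($\Gamma$ is finite-by-(\Cstar-simple), so \cref{lem:finite-by-cstar-simple} applies), and it is fine once the step ``trivial amenable radical, hence trivial Furstenberg subgroup'' is replaced by an actual citation --- that implication is false for general groups, which is exactly why \Cstar-simplicity is subtle; for acylindrically hyperbolic groups the paper extracts it from \cite[Theorem 2.35]{dahmaniguirardelosin17} together with \cref{prop:cstar-simple-quotient}. The genuine gap is in the passage to general actions. \cref{lem:update-crossed-products} requires \emph{every} subgroup of $\Gamma$ to have the \lone-ideal intersection property, and the dichotomy you propose for verifying this --- non-amenable subgroups are again finite-by-(\Cstar-simple), amenable subgroups have polynomial growth --- does not cover the subgroups that actually occur. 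Non-elementary free products are acylindrically hyperbolic, so $(\freegrp{2}\times\ZZ)*\ZZ$ is acylindrically hyperbolic and contains $\freegrp{2}\times\ZZ$; this subgroup is non-amenable but has infinite amenable radical, hence is not finite-by-(\Cstar-simple), and it certainly does not have polynomial growth. Your plan offers no way to treat such a subgroup. You flag this as the point ``where the real content of the argument is concentrated'', and that is correct --- but it is precisely the part that is missing.

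The paper closes this gap by a different route: instead of handling each subgroup by hand, it verifies that acylindrically hyperbolic groups lie in the class $\cU$ of \cref{def:polycyclically-bounded-groups} --- using \cite[Theorem 1.1]{osin16} (subgroups are virtually cyclic or contain a free group) for the conditions on amenable and solvable subgroups, and \cite[Theorem 2.35]{dahmaniguirardelosin17} with \cref{prop:cstar-simple-quotient} for the Furstenberg condition --- and then invokes \cref{thm:ideal-intersection-general-form}. It is the induction inside that theorem, run through the twisted groupoids of \cref{sec:groupoid-presentation}, and not \cref{lem:finite-by-cstar-simple} or \cref{thm:polynomial-growth-group-cstar-unique} alone, that disposes of mixed subgroups such as $\freegrp{2}\times\ZZ$. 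If you wish to keep your more elementary route, you must at minimum import Osin's subgroup dichotomy and still explain how to handle non-amenable subgroups with infinite amenable radical; as written, the case analysis would fail there.
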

\begin{proof}
  In order to apply \cref{thm:ideal-intersection-general-form}, we need to check all conditions of \cref{def:polycyclically-bounded-groups}.  By \cite[Theorem 2.35]{dahmaniguirardelosin17} combined with \cref{prop:cstar-simple-quotient} the first condition is satisfied.  The second and third conditions are satisfied thanks to \cite[Theorem 1.1]{osin16}, which says that subgroups of acylindrically hyperbolic groups are virtually cyclic or contain a copy of the free group.
\end{proof}
In order to obtain our next class of examples to which our main result applies, we need the following result, which is folklore.  We refer the reader unfamiliar with Lie theory to \cite[Table 9, p.~312-317]{onishchikvinberg1990} for the classification of simple real Lie algebras and their rank, which by definition is the dimension of a maximal $\RR$-diagonalisable Lie subalgebra.
\begin{proposition}
  \label{prop:solvable-subgroups-polycyclic}
  Let $\Gamma$ be a lattice in a connected Lie group.  Then there is $l \in \NN$ such that every solvable subgroup of $\Gamma$ is virtually polycyclic and has Hirsch length at most $l$.
\end{proposition}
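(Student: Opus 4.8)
The plan is to separate the problem into two independent tasks: first reduce the assertion about the lattice to a purely local statement about discrete solvable subgroups of the ambient Lie group, and then establish the required uniform bound through the Levi decomposition. First I would use that a lattice $\Gamma$ in a connected Lie group $G$ is by definition discrete, so that any solvable subgroup $H \leq \Gamma$ is automatically a discrete solvable subgroup of $G$. Consequently it suffices to produce a single $l \in \NN$, depending only on $G$, with the property that every discrete solvable subgroup of $G$ is virtually polycyclic of Hirsch length at most $l$; the natural candidate is $l = \dim G$. Since this bound depends on $G$ alone, it applies simultaneously to all solvable subgroups of $\Gamma$, which is exactly what the statement demands.

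For the virtual-polycyclicity part I would invoke the classical structure theory of discrete subgroups of Lie groups: it is well known (going back to Mostow) that every discrete solvable subgroup of a connected Lie group is polycyclic, and in particular finitely generated. This disposes of the qualitative assertion, and I would then concentrate on the uniform bound on the Hirsch length. Here I would use the solvable radical $R \unlhd G$, which is a connected normal solvable subgroup with semisimple quotient $S = G/R$. For a discrete solvable $H \leq G$, the intersection $H \cap R$ is a genuinely discrete subgroup of the connected solvable Lie group $R$, and such subgroups have Hirsch length at most $\dim R$ (by induction on $\dim R$ via the abelianisation $R \thra R/[R,R]$, using that discrete subgroups of $\RR^{n}$ are free abelian of rank at most $n$). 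By additivity of the Hirsch length along $1 \to H \cap R \to H \to H/(H \cap R) \to 1$ it then remains only to bound $\rmh\big(H/(H\cap R)\big)$.

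The image $\bar H = H/(H \cap R)$ is a polycyclic subgroup of the semisimple group $S$, and I would bound its Hirsch length by a constant $c(S)$ depending only on $S$. The diagonalisable directions of a polycyclic subgroup of $S$ are confined to a maximal $\RR$-split torus, whose dimension is the real rank of $S$; this is precisely where the classification of simple real Lie algebras and their ranks from \cite[Table 9]{onishchikvinberg1990} is used to obtain an explicit bound. The unipotent directions are confined to a maximal unipotent subgroup, so that altogether $\rmh(\bar H) \leq c(S) \leq \dim S$. Combining the two contributions yields $\rmh(H) \leq \dim R + \dim S = \dim G$, and $H$ is virtually polycyclic as an extension of virtually polycyclic groups, so $l = \dim G$ works.

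The main obstacle I anticipate lies entirely in the semisimple quotient. The projection $G \thra S$ need not carry the discrete group $H$ onto a discrete subgroup of $S$ — indeed solvable, non-discrete, non-polycyclic subgroups such as $\mathrm{BS}(1,2)$ do embed in $\mathrm{SL}_2(\RR)$ — so the argument used for $R$ cannot be reused, and the control on $\bar H$ must be extracted from polycyclicity of $H$ together with the real-algebraic structure of $S$ (maximal $\RR$-split tori and maximal unipotent subgroups) rather than from discreteness. A secondary technical point is the passage to $S = G/R$ when $G$ has infinite centre or is non-linear, which I would handle by first replacing $G$ by $\Ad(G) \subseteq \GL(\mathfrak{g})$ and absorbing the central contribution separately; the centre is abelian and therefore harmless both for virtual polycyclicity and for the Hirsch-length estimate.
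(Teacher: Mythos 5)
There is a genuine gap, and it sits exactly where you locate the ``main obstacle'': the passage to the semisimple quotient. Your plan is to bound $\rmh(\bar H)$ for the image $\bar H = H/(H\cap R)$ in $S=G/R$ using only that $\bar H$ is polycyclic together with the real-algebraic structure of $S$ (split tori and unipotent subgroups), explicitly renouncing discreteness. But no constant $c(S)$ with the property you assert exists: polycyclic subgroups of a noncompact semisimple Lie group have unbounded Hirsch length. For instance, $n$ rotations through rationally independent angles generate a copy of $\ZZ^n$ inside $\mathrm{SO}(2)\subseteq \mathrm{SL}_2(\RR)$ for every $n$; such elliptic elements are confined to neither a maximal $\RR$-split torus nor a maximal unipotent subgroup, and your sketch does not account for them. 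So the bound on $\rmh(\bar H)$ must use the discreteness of $H$ in $G$ (or, better, the fact that $H$ lies in a \emph{lattice}) in an essential way, and your proposal supplies no mechanism for transporting that information across the non-proper projection $G\thra S$. A secondary, more repairable issue is the induction for $H\cap R$: the image of a discrete subgroup under the abelianisation $R\thra R/[R,R]$ need not be discrete (already $\ZZ+\sqrt{2}\,\ZZ\subseteq\RR$ arises as the projection of a discrete subgroup of $\RR^2$), so ``discrete subgroups of $\RR^n$ have rank at most $n$'' cannot be applied fibrewise without further argument.

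The paper circumvents precisely this difficulty by never trying to bound solvable subgroups of the semisimple Lie group itself. It first applies Prasad's lemma to split the \emph{lattice} into a normal subgroup that is virtually a lattice in a connected solvable Lie group (handled by Raghunathan's theorem that such lattices are polycyclic of Hirsch length at most the dimension) and a quotient that is a lattice in a centre-free semisimple group without compact factors. For the latter it invokes arithmeticity (Margulis in higher rank, Corlette and Gromov--Schoen for $\mathrm{Sp}(n,1)$ and $\rF_{4(-20)}$) to embed the lattice virtually into $\GL_n(\ZZ)$, where a quantitative Mal'cev-type theorem bounds the Hirsch length of all solvable subgroups, and treats the remaining rank-one cases via the boundary action. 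In other words, the uniform bound in the semisimple part comes from integrality, not from the Lie group; this is the input your argument is missing, and it is unlikely to be recoverable by elementary means.
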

\begin{proof}
  Let $G$ be a connected Lie group in which $\Gamma$ is a lattice.  By \cite[Lemma 6]{prasad1976-Lie-groups}, there is a normal subgroup $\Lambda \unlhd \Gamma$ such that $\Lambda$ is virtually a lattice in a connected solvable Lie group and $\Gamma/\Lambda$ is a lattice in a connected semisimple Lie group with trivial centre and without compact factors.  By \cite[Proposition 3.7]{raghunathan1972} every lattice in a connected simply connected solvable Lie group is polycyclic of Hirsch length bounded by the dimension of the Lie group.  Since every connected solvable Lie group is a quotient by a central discrete subgroup of its universal cover, the conclusion applies to lattices in arbitrary connected solvable Lie groups.  So we may assume for the rest of the proof that $\Gamma$ is a lattice in a connected semisimple Lie group $G$ with trivial centre and without compact factors.

  Passing to a finite index subgroup of $\Gamma$, there are direct product decompositions $G = \prod_{i = 1}^n G_i$ and $\Gamma = \prod_{i = 1}^n \Gamma_i$ such that $\Gamma_i \leq G_i$ is an irreducible lattice \cite[Theorem 5.22]{raghunathan1972}.  It hence suffices to consider the case where $\Gamma \leq G$ is already irreducible.  Assuming that $G$ is locally isomorphic with $\mathrm{SO}^+(n,1)$ or $\mathrm{SU}(n,1)$, the group $\Gamma$ acts on the hyperbolic boundary of $G$.  Thus, every solvable subgroup of $G$ is virtually cyclic, finishing the proof in this case. Assume that $G$ is not locally isomorphic with either $\mathrm{SO}^+(n,1)$ or $\mathrm{SU}(n,1)$.  Then the arithmeticity theorems of Margulis for lattices in semisimple Lie groups of higher rank presented in \cite[Chapter IX]{margulis1991} and \cite[Theorem 6.1.2]{zimmer84}, and the arithmeticity theorem for simple Lie groups of rank one locally isomorphic with $\mathrm{Sp}(n,1)$ or $\rF_{4(-20)}$ by Corlette \cite{corlette1992} and Gromov-Schoen \cite{gromovschoen1992} applies to show that $\Gamma$ is virtually linear over $\ZZ$.  Say it virtually embeds into  $\GL_n(\ZZ)$.  Now \cite[Proposition 2.9]{detinkoflanneryobrien2013} says that there is $l = l(n)$ such that every solvable subgroup of $\GL_n(\ZZ)$ is polycyclic of Hirsch length at most $l$.
\end{proof}

\begin{corollary}
  \label{cor:lattices-lone-ideal-intersection-property}
  Let $\Gamma$ be a lattice in a connected Lie group.  Then any action of $\Gamma$ on a locally compact Hausdorff space has the \lone-ideal intersection property.
\end{corollary}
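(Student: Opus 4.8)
The plan is to show that $\Gamma$ belongs to the class $\cU$ of \cref{def:polycyclically-bounded-groups} and then to apply \cref{thm:ideal-intersection-general-form}. Since the three defining conditions of $\cU$ are universally quantified over finitely generated subgroups and their sub-, amenable, and solvable subgroups, it suffices to check, for every subgroup $H \leq \Gamma$, that (i) the Furstenberg subgroup of $H$ equals $\rR(H)$, (ii) $H$ is virtually solvable whenever it is amenable, and (iii) $H$ is polycyclic of Hirsch length at most some fixed $l \in \NN$ whenever it is solvable.

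Condition (iii) is almost immediate from \cref{prop:solvable-subgroups-polycyclic}, which supplies an $l$ such that every solvable subgroup of $\Gamma$ is virtually polycyclic of Hirsch length at most $l$. A solvable virtually polycyclic group is in fact polycyclic: virtually polycyclic groups are Noetherian, and a solvable group all of whose subgroups are finitely generated is polycyclic (as recalled in \cref{sec:virtually-polycyclic-groups}); the Hirsch length bound passes to subgroups. This settles (iii).

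For (i) and (ii) I would exploit the structural decomposition underlying \cref{prop:solvable-subgroups-polycyclic}. By \cite[Lemma 6]{prasad1976-Lie-groups} there is a normal subgroup $\Lambda \unlhd \Gamma$ that is virtually polycyclic and whose quotient $Q = \Gamma/\Lambda$ is a lattice in a connected semisimple Lie group of trivial centre and without compact factors. Such a Lie group is of adjoint type, hence linear, so $Q$ and all its subgroups are linear. Given $H \leq \Gamma$, put $N = H \cap \Lambda$, a virtually polycyclic (hence amenable) normal subgroup of $H$, and $\bar H = H/N \cong H\Lambda/\Lambda \leq Q$. For (ii): if $H$ is amenable then $\bar H$ is an amenable linear group, hence virtually solvable by the Tits alternative, and in fact virtually polycyclic, its solvable subgroups being polycyclic in the arithmetic case by \cite[Proposition 2.9]{detinkoflanneryobrien2013} and virtually cyclic (or parabolic, hence virtually nilpotent) in the rank-one geometric case; as virtually polycyclic groups are closed under extensions, $H$ is virtually polycyclic, in particular virtually solvable. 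For (i): I would use that a normal amenable subgroup acts trivially on the Furstenberg boundary, so that $N$ lies in the kernel of $H \curvearrowright \partial_{\rF} H$; consequently $\partial_{\rF} H = \partial_{\rF} \bar H$ and the Furstenberg subgroup of $H$ is the preimage under $H \to \bar H$ of the Furstenberg subgroup of $\bar H$, while $\rR(H)$ is the preimage of $\rR(\bar H)$ because $N \leq \rR(H)$. Since $\bar H$ is linear, its Furstenberg subgroup coincides with its amenable radical by \cite{breuillardkalantarkennedyozawa14}, and pulling back yields the Furstenberg subgroup of $H$ equal to $\rR(H)$.

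I expect the main obstacle to be condition (i). One must invoke the deep input from the theory of \Cstar-simplicity for linear groups, namely that for the linear group $\bar H$ the (amenable) Furstenberg URS coincides with the amenable radical, and one must transport this correctly through the extension $N \to H \to \bar H$. The clean transport relies on the fact that amenable normal subgroups act trivially on Furstenberg boundaries, so that both the Furstenberg subgroup and the amenable radical of $H$ are the preimages of the corresponding objects for $\bar H$; verifying these two facts with the right generality, together with the linearity of $Q$, is where the real work lies, whereas conditions (ii) and (iii) reduce to the Tits alternative and to \cref{prop:solvable-subgroups-polycyclic}.
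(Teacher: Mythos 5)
Your proposal is correct and follows essentially the same route as the paper: verify membership in the class $\cU$ of \cref{def:polycyclically-bounded-groups} using Prasad's decomposition of the lattice, the Tits alternative, \cite[Theorem 6.9]{breuillardkalantarkennedyozawa14} together with \cref{prop:cstar-simple-quotient}, and \cref{prop:solvable-subgroups-polycyclic}, and then apply \cref{thm:ideal-intersection-general-form}. You are in fact slightly more explicit than the paper on two minor points --- transporting the Furstenberg subgroup and the amenable radical through the extension $N \to H \to \bar H$ for an arbitrary subgroup $H \leq \Gamma$, and upgrading ``solvable and virtually polycyclic'' to ``polycyclic'' in condition (iii) --- both of which the paper leaves implicit.
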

\begin{proof}
  In order to apply \cref{thm:ideal-intersection-general-form}, we have to check all conditions of \cref{def:polycyclically-bounded-groups}.  Let $\Lambda$ be the amenable radical of $\Gamma$.  Then by \cite[Lemma 6]{prasad1976-Lie-groups}, we infer that $\Lambda$ is virtually a lattice in a connected solvable Lie group and that $\Gamma / \Lambda$ is a lattice in a semisimple Lie group with trivial centre and without compact factors.  Since Lie groups with trivial centre are linear, \cite[Theorem 6.9]{breuillardkalantarkennedyozawa14} implies that $\Gamma/\Lambda$ is \Cstar-simple.  So the first condition of \cref{def:polycyclically-bounded-groups} is verified thanks to \cref{prop:cstar-simple-quotient}.  Also, the Tits alternative for linear groups in characteristic zero \cite{tits1972-alternative} shows that every amenable subgroup of $\Gamma/\Lambda$ is virtually solvable.  Since $\Lambda$ is virtually solvable, this shows that every amenable subgroup of $\Gamma$ is virtually solvable.  This checks the second condition of \cref{def:polycyclically-bounded-groups}.  In order to verify the last one, we can apply \cref{prop:solvable-subgroups-polycyclic}.
\end{proof}
A variation of the core arguments in the previous theorem, also covers many linear groups.
\begin{corollary}
  \label{cor:linear-groups-lone-ideal-intersection-property}
  Let $\Gamma$ be a linear group over the integers of a number field. Then any action of $\Gamma$ on a locally compact Hausdorff space has the \lone-ideal intersection property.
\end{corollary}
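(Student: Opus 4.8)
The plan is to show that $\Gamma$ lies in the class $\cU$ of \cref{def:polycyclically-bounded-groups} and then invoke \cref{thm:ideal-intersection-general-form}, following the pattern of \cref{cor:lattices-lone-ideal-intersection-property}. The first step is to reduce to integer matrices. If $\Gamma \leq \GL_n(\cO_K)$ for the ring of integers $\cO_K$ of a number field $K$ with $d = [K:\QQ]$, then $\cO_K \cong \ZZ^d$ as a $\ZZ$-module, and its left regular representation yields a ring embedding $\cO_K \hra \Mat{d}{\ZZ}$, hence $\Mat{n}{\cO_K} \hra \Mat{nd}{\ZZ}$. Since the inverse of a matrix in $\GL_n(\cO_K)$ again has entries in $\cO_K$, its image is invertible over $\ZZ$, and we obtain $\GL_n(\cO_K) \hra \GL_{nd}(\ZZ)$. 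Thus we may assume $\Gamma \leq \GL_m(\ZZ)$ with $m = nd$; in particular every subgroup of $\Gamma$ is linear over $\QQ$.

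It then remains to verify the three conditions of \cref{def:polycyclically-bounded-groups} for an arbitrary finitely generated $\Lambda \leq \Gamma$, and these are precisely the ingredients already assembled for lattices. For the third condition, \cite[Proposition 2.9]{detinkoflanneryobrien2013}, used in the proof of \cref{prop:solvable-subgroups-polycyclic}, furnishes $l = l(m)$ such that every solvable subgroup of $\GL_m(\ZZ)$ is polycyclic of Hirsch length at most $l$. For the second condition, the Tits alternative \cite{tits1972-alternative} applied to the characteristic-zero linear group $\Lambda$ shows that an amenable subgroup, containing no non-abelian free group, is virtually solvable. For the first condition, every subgroup $H \leq \Lambda$ is again linear over $\QQ$, so \cite[Theorem 6.9]{breuillardkalantarkennedyozawa14} gives that $H/\rR(H)$ is \Cstar-simple; by the characterisation of \Cstar-simplicity through triviality of the Furstenberg subgroup recalled in \cref{sec:cstar-simplicity}, this means the Furstenberg subgroup of $H$ coincides with $\rR(H)$, exactly as \cref{prop:cstar-simple-quotient} is used in \cref{cor:lattices-lone-ideal-intersection-property}. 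With all three conditions in hand, \cref{thm:ideal-intersection-general-form} delivers the \lone-ideal intersection property for every action of $\Gamma$ on a locally compact Hausdorff space.

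The step that genuinely uses the integrality hypothesis---and which I expect to be the crux---is the third condition: conditions one and two hold for linear groups over any field of characteristic zero, whereas the uniform bound on the Hirsch length of solvable subgroups fails over a general field and is available only because \cite[Proposition 2.9]{detinkoflanneryobrien2013} applies to $\GL_m(\ZZ)$. The regular-representation embedding is the device that transports the problem into $\GL_m(\ZZ)$ and makes that result applicable. A minor but essential point is that all three conditions are required for every subgroup of a finitely generated $\Lambda$, which causes no trouble here because $\QQ$-linearity passes to subgroups, so both \cite[Theorem 6.9]{breuillardkalantarkennedyozawa14} and the Tits alternative apply to each $H \leq \Lambda$ verbatim.
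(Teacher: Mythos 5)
Your proposal is correct and follows essentially the same route as the paper: verifying the three conditions of \cref{def:polycyclically-bounded-groups} via \cite[Theorem 6.9]{breuillardkalantarkennedyozawa14} together with \cref{prop:cstar-simple-quotient}, the Tits alternative, and \cite[Proposition 2.9]{detinkoflanneryobrien2013}, then invoking \cref{thm:ideal-intersection-general-form}. The only addition is your explicit restriction-of-scalars embedding $\GL_n(\cO_K) \hra \GL_{nd}(\ZZ)$, a correct and worthwhile detail that the paper leaves implicit when citing the Hirsch-length bound for $\GL_m(\ZZ)$.
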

\begin{proof}
  Let $\Gamma$ be as in the statement of the theorem.  We have to check all three conditions of \cref{def:polycyclically-bounded-groups}.  The first condition is satisfied thanks to \cref{prop:cstar-simple-quotient} combined with \cite[Theorem 6.9]{breuillardkalantarkennedyozawa14}.  The second condition holds thanks to the Tits alternative for linear groups in characteristic zero \cite{tits1972-alternative}.  The last condition holds thanks to \cite[Proposition 2.9]{detinkoflanneryobrien2013}.  
\end{proof}
Our final class of examples to which \cref{thm:ideal-intersection-general-form} applies are virtually polycyclic groups, and more generally locally virtually polycyclic groups, which are precisely those groups whose finitely generated subgroups are virtually polycyclic.  We also state the result in terms of \Cstar-uniqueness.
\begin{corollary}
  \label{cor:locally-virtually-polycyclic}
  Let $\Gamma$ be a locally virtually polycyclic group.  Then any action of $\Gamma$ on a locally compact Hausdorff space has the \lone-ideal intersection property.  In particular, every locally virtually polycyclic group is \Cstar-unique.
\end{corollary}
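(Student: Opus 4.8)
The plan is to deduce this as a direct application of \cref{thm:ideal-intersection-general-form}: it suffices to show that every locally virtually polycyclic group $\Gamma$ belongs to the class $\cU$ of \cref{def:polycyclically-bounded-groups}. Since membership in $\cU$ is a condition on finitely generated subgroups, I would fix an arbitrary finitely generated subgroup $\Lambda \leq \Gamma$, which by hypothesis is virtually polycyclic, and check the three defining conditions for $\Lambda$.

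For the first condition I would observe that $\Lambda$ and all of its subgroups are amenable, being virtually polycyclic and hence solvable-by-finite. For an amenable group $H$ the amenable radical is $H$ itself, while the Furstenberg boundary $\partial_{\rF} H$ is a single point, so the stabiliser URS is $\{H\}$ and the Furstenberg subgroup equals $H$ as well. Thus the Furstenberg subgroup and the amenable radical of every subgroup of $\Lambda$ both coincide with that subgroup, and the first condition holds. The second condition is immediate: every subgroup of the virtually polycyclic group $\Lambda$ is again virtually polycyclic, hence virtually solvable, so in particular every amenable subgroup is.

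For the third condition I would take $l = \rmh(\Lambda)$. The key input here is that virtually polycyclic groups are Noetherian, i.e.\ all their subgroups are finitely generated; consequently any solvable subgroup $S \leq \Lambda$ has all of its own subgroups finitely generated and is therefore polycyclic by the characterisation of \cite[Chapter 1, Proposition 4]{segal1983} recalled in \cref{sec:virtually-polycyclic-groups}. Monotonicity of the Hirsch length under passage to subgroups then gives $\rmh(S) \leq \rmh(\Lambda) = l$. With all three conditions verified, $\Gamma \in \cU$, and \cref{thm:ideal-intersection-general-form}, applied with the trivial cocycle, yields the \lone-ideal intersection property for every action of $\Gamma$ on a locally compact Hausdorff space.

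For the \Cstar-uniqueness statement I would specialise to the case where $X$ is a point and invoke amenability of $\Gamma$: by the second item of \cref{remark:relation-to-simplicity-and-C-star-uniqueness}, for amenable systems the \lone-ideal intersection property of $(\Gamma, 1)$ is exactly \Cstar-uniqueness of $\lone(\Gamma)$. I do not expect a genuine obstacle in this proof, since the content lies entirely in the structural verification that $\Gamma \in \cU$; the only mildly delicate point is the first condition, where one must recall that amenability forces the Furstenberg boundary to be trivial, so that the Furstenberg subgroup and the amenable radical necessarily agree.
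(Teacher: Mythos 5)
Your proposal is correct and follows essentially the same route as the paper: verify that (locally) virtually polycyclic groups lie in the class $\cU$ of \cref{def:polycyclically-bounded-groups} (amenability gives the Furstenberg/amenable-radical condition, subgroups are virtually polycyclic hence virtually solvable, and Noetherianity plus monotonicity of the Hirsch length gives the uniform bound) and then apply \cref{thm:ideal-intersection-general-form} with trivial cocycle. The only cosmetic difference is that you exploit the fact that membership in $\cU$ is already a condition on finitely generated subgroups, whereas the paper first invokes \cref{prop:lone-ideal-intersection-directed-union} to reduce to virtually polycyclic groups; both are fine.
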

\begin{proof}
  By \cref{prop:lone-ideal-intersection-directed-union}, it suffices to show that every virtually polycyclic group satisfies the conditions of \cref{def:polycyclically-bounded-groups}.  The first condition is satisfied since virtually polycyclic groups are amenable.  The second condition holds, since every subgroup of a polycyclic group is polycyclic.  Finally, the Hirsch length is monotone for inclusions of groups, so that the last condition is also satisfied.  Now \cref{thm:ideal-intersection-general-form} applies.
\end{proof}

\begin{remark}
  \label{rem:possible-extensions}
  It would be interesting to understand whether all linear groups have the \lone-ideal intersection property.  We expect that a positive answer can be obtained.  However, the groupoid techniques employed in the present work will likely not be sufficient to prove such a result for two reasons.  First, there need not be any torsion points in the dual of an abelian group, so that an induction like in the proof of \cref{thm:ideal-intersection-general-form} cannot be performed.  Second, following the strategy of the present work, there is no clear induction variable available for solvable groups which are not polycyclic.  The derived length is not suitable.  Indeed, the induction step in the proof of \cref{thm:ideal-intersection-general-form} only divides out a (possibly proper) infinite subgroup of the last term in the derived series.

Concrete examples of solvable, non-polycyclic groups can nevertheless be covered by our present methods.  The arguments presented show that metabelian groups have the \lone-ideal intersection property, since each such group is an inductive limit of semi-direct products $A \rtimes \ZZ^{n_i}$ for some monotone sequence of natural numbers $(n_i)_i$.  We don't give any details of the argument.  Many metabelian groups are already known to have the \lone-ideal intersection property by \cite[p. 11, Korollar]{boidolleptinschurmannwahle1978}.
\end{remark}

\begin{remark}
	Another interesting direction of research would approach cocycle twists of group $C^*$-algebras already known to have the $\ell^1$-ideal intersection property. For groups which are finite-by-(C*-simple) this is the content of \cref{lem:finite-by-cstar-simple}, however for virtually polycyclic groups, our proof techniques only apply to twists by cocycles with a finite image. The same obstruction is inherited to the case of lattices in Lie groups.
\end{remark}



{\small
  \printbibliography
}


\vspace{2em}
\begin{minipage}[t]{0.55\linewidth}
  \small
  Are Austad \\
  Department of Mathematics \\
  University of Oslo \\
  P.O. Box 1053 Blindern \\
  N-0316 Oslo \\
  Norway \\[1em]
  areaus@math.uio.no
\end{minipage}
\begin{minipage}[t]{0.45\linewidth}
  \small
  Sven Raum \\
  Institute of Mathematics \\
  University of Potsdam \\
  Karl-Liebknecht-Str. 24-25 \\
  D-14476 Potsdam \\
  Germany \\[1em]
  sven.raum@uni-potsdam.de
\end{minipage}

\end{document}